\newcommand{\Hom}{{\mathrm{Hom}}}
\def\s{\mathscr }
\DeclareMathAlphabet{\mathbbmsl}{U}{bbm}{m}{sl}
\newcommand{\res}{{\rm res}}
\newcommand{\tr}{{\rm tr}}
\def\uxs{U_{\be X\be/\lbe S}\le}
\def\uxss{U_{\be S}(X\le)}
\def\uyss{U_{\be S}(Y\le)}
\def\pic{{\rm{Pic}}\,}
\def\bg{{\mathbb G}}
\def\g{\varGamma}
\def\s{\mathscr }
\newcommand{\isoto}{\overset{\!\sim}{\to}}
\def\g{\varGamma}
\def\br{{\rm{Br}}\e}
\def\brp{{\rm{Br}}^{\le\prime}\lbe}
\def\xb{\overline{X}}
\def\yb{\overline{Y}}
\def\kb{\overline{k}}
\def\uys{U_{\le Y/S}\le}
\def\ugs{U_{G/S}\le}
\def\picxs{{\rm Pic}_{X\! /\be S}}
\def\npic{{\rm N}\lle\pic\lbe }
\def\bxs{\brp(\lbe X\be/\be S\le)}
\def\bys{\brp(\le Y/S\e)}
\definecolor{labelkey}{rgb}{1,0,0}
\DeclareMathAlphabet{\mathcalligra}{T1}{calligra}{m}{n}
\numberwithin{equation}{section}
\newcommand{\sh}{\kern -.4em\phantom{a}^{\mathbf{\sim}}}
\newcommand{\lra}{\longrightarrow}
\newcommand{\et}{{\rm {\acute et}}}
\newcommand{\fppf}{{\rm fppf}}
\newcommand{\ks}{k^{\le\rm s}}
\newcommand{\xbar}{\overline{X}}
\def\be{\kern -.1em}
\def\le{\kern 0.03em}
\def\lle{\kern 0.015em}
\def\lbe{\kern -.025em}
\newcommand{\Z}{{\mathbb Z}}
\newcommand{\Q}{{\mathbb Q}}
\newcommand{\spec}{\mathrm{ Spec}\,}
\newcommand{\krn}{\mathrm{Ker}\e}
\newcommand{\img}{\mathrm{Im}\e}
\newcommand{\cok}{\mathrm{Coker}\,}
\newcommand{\ext}{\mathrm{Ext}}
\def\e{\kern 0.08em}
\newcommand{\xs}{X^{\rm{s}}}
\newcommand{\ys}{Y^{\le\rm{s}}}
\newtheorem{lemma}{Lemma}[section]
\newtheorem{theorem}[lemma]{Theorem}
\newtheorem{proposition-definition}[lemma]{Proposition-Definition}
\newtheorem{corollary}[lemma]{Corollary}
\newtheorem{proposition}[lemma]{Proposition}
\theoremstyle{definition}
\newtheorem{definition}[lemma]{Definition}
\theoremstyle{remark}
\newtheorem{remark}[lemma]{Remark}
\newtheorem{remarks}[lemma]{Remarks}
\newtheorem{example}[lemma]{Example}
\newtheorem{examples}[lemma]{Examples}
\begin{document}

\input xy     
\xyoption{all}

\title[Units and Picard groups of a product]{On the group of units and the Picard group of a product}

\author{Cristian D. Gonz\'alez-Avil\'es}
\address{Departamento de Matem\'aticas, Universidad de La Serena, Cisternas 1200, La Serena 1700000, Chile}
\email{cgonzalez@userena.cl}
\date{\today}

\subjclass[2010]{Primary 14F20, 11G99}
\keywords{Units, products of schemes, Picard group. }

\maketitle

\topmargin -1cm

\smallskip

\maketitle

\topmargin -1cm

\begin{abstract} Let $S$ be a reduced scheme and let $f\colon X\to S$ and $g\colon Y\to S$ be faithfully flat morphisms locally of finite presentation with reduced and connected maximal geometric fibers. We discuss the canonical maps $\bg_{m,\le S}\lbe(\lbe X\le)\oplus \bg_{m,\le S}\lbe(\le Y\le)\to\bg_{m,\le S}\lbe(X\lbe\times_{\be S}\lbe Y\le)$ and $\pic X\e\oplus\e\pic Y\to \pic\be(X\lbe\times_{\be S}\lbe Y\e)$ induced by $f$ and $g$. Under certain additional conditions on $S,X,Y,f$ and $g$, we describe the kernel and cokernel of the preceding maps, thereby extending known results when $S$ is the spectrum of a field.
\end{abstract}

\section{Introduction}

In this paper a {\it $k$-variety}, where $k$ is a field, is a geometrically integral $k$-scheme of finite type.

Let $k$ be a field and let $X$ be a $k$-variety. If $k$ has characteristic 0 and $\overline{k}$ is an algebraic closure of $k$, Borovoi and van Hamel introduced in \cite{bvh} the ``extended Picard complex of $X$ over $k$", denoted ${\rm UPic}(\e\xbar\e)={\rm UPic}(\e X\be\times_{k}\be\overline{k}\e)$, and used it to investigate a certain ``elementary obstruction" for $k$-torsors $X$ under $G$, where $G$ is a smooth and connected linear algebraic $k$-group. The object ${\rm UPic}(\e\xbar\e)$ is a complex of Galois modules of length 2 whose zero-th cohomology is $U_{k}(\xbar\le)=\overline{k}\le[X]^{*}\be/\e\overline{k}^{\e*}$ and first cohomology is $\pic\xbar$. The elementary obstruction alluded to above
is a class $e(X)\in\ext^{1}({\rm UPic}(\e\xbar\e),\bg_{m,\le k})\simeq H^{1}_{\rm ab}(k,G\e)$ that vanishes if $X(k)\neq\emptyset$, where $H^{1}_{\rm ab}(k,G\e)$ is Borovoi's first abelian cohomology group. It is shown in \cite[Proposition 5.7]{bvh} that, if the abelianization map $H^{1}(k,G\e)\to H^{1}_{\rm ab}(k,G\e)$ has a trivial kernel, then $e(X)$ is the only obstruction to the existence of a $k$-rational point on $X$. This paper is motivated by the following variant of the above question. Let $S$ be a connected Dedekind scheme with function field $K$, $G$ a reductive $S$-group scheme and $X$ a generically trivial $S$-torsor under $G$, i.e., $X(K)\neq\emptyset$. Then, {\it what are the obstructions to extending a given section $\spec\be K\to X_{\be K}$ to a section $S\to X\e$}? As the proof of \cite[Theorem 4.5(c), p.~326]{ctx} suggests, it seems reasonable to expect that such obstructions lie in the abelian class group $C_{\rm ab}(G\e)$ of $G$ introduced in \cite{ga3}, which is a certain subgroup of $H^{1}_{\rm ab}(S_{\fppf},G\e)$. In order to discuss the above question (elsewhere),
we need to generalize the definition of ${\rm UPic}(\e\xbar\e)$ to an arbitrary base scheme and establish an analog of the additivity lemma
\begin{equation}\label{ky}
{\rm UPic}(\e\xb\times_{\kb} \yb\e)\simeq {\rm UPic}(\e\xb\e)\oplus {\rm UPic}(\e\yb\e)
\end{equation}
\cite[Lemma 5.1]{bvh}, where $X$ and $Y$ are smooth $k$-varieties and $\yb$ is rational. Via \cite[Lemma 6.4]{san}, \eqref{ky} is the key to connecting the Borovoi-van Hamel elementary obstruction $e(X)$ to the abelian cohomology group $H^{1}_{\rm ab}(k,G\e)$. The proof of \eqref{ky} combines Rosenlicht's additivity theorem
$U_{k}(\e\xb\le\times_{\kb}\le \yb\e)\simeq U_{k}(\xb\le)\oplus\e U_{k}(\yb\le)$ \cite[Theorem 2]{ros} and a lemma of Colliot-Th\'el\`ene and Sansuc \cite[Lemma 11, p.~188]{cts77} which states that $\pic\be(\e\xb\times_{\kb}\yb\e)\simeq \pic\xb\le\oplus\le \pic\yb$ if $X$ and $Y$ are smooth $k$-varieties and $\yb$ is rational.

Thus in this paper we are concerned with generalizing the indicated additivity statements of Rosenlicht and Colliot-Th\'el\`ene-Sansuc. Our generalizations yield statements that we believe are of independent interest (e.g., Corollaries \ref{col} and \ref{2} below), as well as the generalizations of ${\rm UPic}(\e\xb\e)$ and \eqref{ky}
alluded to above (these can be found in \cite[\S3 and Proposition 4.4]{ga2}, respectively). They also yield the applications to the Brauer group discussed in \cite{ga2}.

We now observe that Rosenlicht's theorem (suitably extended) is equivalent to the following statement due to Conrad \cite{con}: if $X$ and $Y$ are geometrically connected and geometrically reduced schemes locally of finite type over a field $k$, then there exists a canonical exact sequence of abelian groups
\begin{equation}\label{c1}
1\to k^{\le *}\to \bg_{m,\e k}\lbe(\lbe X\le)\oplus \bg_{m,\e k}\lbe(\le Y\le)\to\bg_{m,\e k}\lbe(X\!\times_{ k}\! Y\le)\to 1.
\end{equation}
In particular, every unit on $X\!\times_{ k}\! Y$ is a product of pullbacks of a unit on $X$ and a unit on $Y$. The generalized Rosenlicht additivity theorem established in this paper (see Corollary \ref{ros0} below) yields the following generalization of Conrad's statement:

\begin{theorem} {\rm (= Corollary \ref{prec})} \label{1} Let $S$ be a reduced scheme and let $f\colon X\to S$ and $g\colon Y\to S$ be faithfully flat morphisms locally of finite presentation with reduced and connected maximal geometric fibers. Assume that
$f\be\times_{\be S} g\e\colon X\lbe\times_{\lbe S}\lbe Y\to S$ has an \'etale quasi-section (e.g., is smooth).
Then there exists a canonical exact sequence of abelian groups
\[
1\to \bg_{m,\le S}\le(S\le)\to \bg_{m,\le S}\le(X\le)\oplus \bg_{m,\le S}\le(\le Y\le)\to\bg_{m,\le S}\le(X\!\times_{\lbe S}\be Y\le)\to \krn\pic f\le\cap\le \krn\pic g\to 0,
\]
where $\pic\be f\colon \pic S\to\pic X$ and $\pic g\colon \pic S\to\pic Y$ are the canonical maps and the intersection takes place inside $\pic S$.
\end{theorem}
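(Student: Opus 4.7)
My plan is to deduce the stated four-term exact sequence by combining the generalized Rosenlicht additivity theorem (Corollary \ref{ros0}) with the tautological short exact sequences that define the relative unit groups $\uxss$, $\uyss$ and $U_{\be S}(X\times_S Y)$, via a snake-lemma chase.

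Since $f$, $g$ and $h:=f\times_{\lbe S}g$ are faithfully flat, the pullback maps $\bg_{m,\le S}(S)\hookrightarrow \bg_{m,\le S}(X)$, $\bg_{m,\le S}(Y)$ and $\bg_{m,\le S}(X\times_S Y)$ are injective, giving short exact sequences
\[
0\to \bg_{m,\le S}(S)\to \bg_{m,\le S}(X)\to \uxss\to 0
\]
and analogously for $Y$ and for $X\times_S Y$. Under the hypotheses of the theorem, I expect Corollary \ref{ros0} to provide a canonical exact sequence
\[
0\to \uxss\oplus \uyss\xrightarrow{\;\nu\;} U_{\be S}(X\times_S Y)\to \krn\pic f\cap\krn\pic g\to 0
\]
in which $\nu$ is induced by the two projections $p_X$ and $p_Y$ (this is where the \'etale quasi-section hypothesis is used).

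Next I would form the commutative diagram with exact rows
\[
\begin{CD}
0 @>>> \bg_{m,\le S}(S)\oplus\bg_{m,\le S}(S) @>>> \bg_{m,\le S}(X)\oplus\bg_{m,\le S}(Y) @>>> \uxss\oplus\uyss @>>> 0 \\
@. @VV{m}V @VV{\mu}V @VV{\nu}V @. \\
0 @>>> \bg_{m,\le S}(S) @>>> \bg_{m,\le S}(X\times_S Y) @>>> U_{\be S}(X\times_S Y) @>>> 0
\end{CD}
\]
in which $m(s_1,s_2)=s_1s_2$ and $\mu(u,v)=p_X^*u\cdot p_Y^*v$. Commutativity of the left square is immediate since $h^*s=p_X^*f^*s=p_Y^*g^*s$, so both compositions send $(s_1,s_2)$ to $h^*(s_1s_2)$; commutativity of the right square is part of the statement of $\nu$.

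The vertical map $m$ is surjective with kernel $\{(s,s^{-1}):s\in \bg_{m,\le S}(S)\}\cong \bg_{m,\le S}(S)$, while by the displayed sequence above $\nu$ is injective with cokernel $\krn\pic f\cap\krn\pic g$. The snake lemma therefore yields
\[
0\to \bg_{m,\le S}(S)\to \krn\mu\to 0\to 0\to \cok\mu\to \krn\pic f\cap\krn\pic g\to 0,
\]
so $\krn\mu\cong \bg_{m,\le S}(S)$ via $s\mapsto(f^*s,(g^*s)^{-1})$ and $\cok\mu\cong \krn\pic f\cap\krn\pic g$; splicing these together produces the exact sequence of the statement. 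The substantive work, and the main obstacle, lies entirely in establishing Corollary \ref{ros0} itself; granted that input, the derivation above is a purely formal diagram chase.
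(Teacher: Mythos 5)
There is a genuine gap: both ``input'' exact sequences that you feed into the snake lemma are misstated, and the misstatement occurs exactly at the subtle point of the theorem. With the paper's definition $\uxss=\uxs(S)$ (sections over $S$ of the \'etale sheaf of relative units), the sequence $0\to\bg_{m,\le S}(S)\to\bg_{m,\le S}(X)\to\uxss\to 0$ is \emph{not} exact on the right: the sheaf sequence $1\to\bg_{m,\le S}\to f_*\bg_{m,\le X}\to\uxs\to 1$ is exact, but taking sections over $S$ only yields
\[
0\to\bg_{m,\le S}(S)\to\bg_{m,\le S}(X)\to\uxss\to\pic S\to H^{1}(S_{\et},f_*\bg_{m,\le X}),
\]
so the cokernel of $\bg_{m,\le S}(X)\to\uxss$ is $\krn\pic f$ (see \eqref{useq2} and \eqref{mag2}); it vanishes when $\pic S=0$, which is why the naive global-sections argument works over a field but not here. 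Likewise for $Y$ and for $X\times_S Y$. Second, Corollary \ref{ros0} asserts that $\uxs\oplus\uys\to U_{X\times_S Y/S}$ is an isomorphism of \'etale sheaves, so on sections over $S$ your map $\nu$ is an \emph{isomorphism} with zero cokernel; the four-term sequence $0\to\uxss\oplus\uyss\to U_{\be S}(X\times_S Y)\to\krn\pic f\cap\krn\pic g\to 0$ that you ``expect'' from it is false as stated. (If instead you redefine the unit groups as the quotients $\bg_{m,\le S}(X)/\bg_{m,\le S}(S)$ so that your first rows become tautologies, then that four-term sequence becomes essentially equivalent to the theorem itself, and assuming it is circular.) The two errors happen to compensate in the final answer, but neither step is justified.

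The repair --- and this is what the paper does --- is to run your diagram at the level of \'etale sheaves on $S$, where the rows genuinely are short exact (Lemma \ref{sch-dom}) and Corollary \ref{ros0} applies verbatim to the right-hand column; the snake lemma then yields the short exact sequence of sheaves \eqref{cseq},
\[
1\to \bg_{m,\le S}\to f_{\lbe *}\bg_{m,\le X}\oplus g_{*}\bg_{m,\le Y}\to(f\times_{S} g)_{*}\bg_{m,\e X\times_{S} Y}\to 1,
\]
and the theorem is read off from the beginning of its long exact cohomology sequence: the term $\krn\pic f\cap\krn\pic g$ arises as the image of the connecting map $\bg_{m,\le S}(X\times_S Y)\to H^{1}(S_{\et},\bg_{m,\le S})=\pic S$, which equals $\krn H^{1}(f^{\flat})\cap\krn H^{1}(g^{\flat})=\krn\pic f\cap\krn\pic g$ by \eqref{psir} and \eqref{mag2}. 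Your identification $\krn\mu\cong\bg_{m,\le S}(S)$ survives, but the computation of $\cok\mu$ genuinely requires this passage through $H^{1}$.
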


To state the following corollary of the theorem, we need a definition. If $f\colon X\to S$ is a morphism of schemes, the \'etale index $I(\le f\le)$ of $f$ is the greatest common divisor
of the degrees of all finite and \'etale quasi-sections of $f$ of constant degree, if any exist. Note that $I(\le f\le)$ is defined (and is equal to 1) if $f$ has a section. Further, if $I(\le f\le)$ is
defined, then it annihilates the subgroup $\krn\pic f$ of $\pic S$. Theorem \ref{1} and work of Tamagawa \cite{tam} yield the following statement.

\begin{corollary}\label{col} Let $S$ be a smooth, affine and geometrically connected curve over a finite field and let $f\colon X\to S$ and $g\colon Y\to S$ be smooth and surjective morphisms with geometrically irreducible generic fibers. Then $h_{S}=|\e\pic S\e|$ is finite and both
$I(\le f\e)$ and $I(\e f\e)$ are defined. Assume that ${\rm gcd}(I(\le f\e), I(\e f\e), h_{S})=1$. Then every unit on $X\times_{S}Y$ is a product of pullbacks of a unit on $X$ and a unit on $Y$.
\end{corollary}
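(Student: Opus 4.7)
The plan is to verify the hypotheses of Theorem \ref{1}, apply it, and then use the gcd assumption to kill the cokernel.

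First I would establish the three numerical claims. For the finiteness of $h_S$: embed $S$ in its smooth projective completion $\overline{S}$ over $\mathbb{F}_q$. The excision sequence $\bigoplus_{P \in \overline{S}\setminus S}\mathbb{Z} \to \pic \overline{S} \to \pic S \to 0$ exhibits $\pic S$ as a quotient of $\pic\overline{S}$ by a subgroup containing classes of positive degree; since $\mathrm{Pic}^{0}\overline{S} = J(\mathbb{F}_q)$ is finite (the Jacobian being proper over a finite field), one deduces that $\pic S$ is finite. For the existence of $I(\le f\le)$ and $I(g)$: this is where Tamagawa's theorem \cite{tam} enters, asserting that a smooth surjective morphism to a smooth curve over a finite field with geometrically irreducible generic fiber admits finite étale quasi-sections of constant degree.

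Next I check the hypotheses of Theorem \ref{1}. The geometric generic fibers of $f$ and $g$ are geometrically irreducible by hypothesis and smooth (hence reduced) because $f, g$ are smooth, so they are geometrically connected and geometrically reduced. The morphism $f \times_{\lbe S} g\colon X \times_{\lbe S} Y \to S$ is smooth, being the composition of base changes of smooth morphisms, so in particular it admits an étale quasi-section. Applying Theorem \ref{1} yields the exact sequence
\[
1 \to \bg_{m,\le S}(S) \to \bg_{m,\le S}(X) \oplus \bg_{m,\le S}(Y) \to \bg_{m,\le S}(X\times_{\lbe S} Y) \to A \to 0,
\]
where $A = \krn\pic f \cap \krn\pic g \subseteq \pic S$.

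To conclude, I must show $A = 0$. By the remark preceding the corollary, $I(\le f\le)$ annihilates $\krn\pic f$ and $I(g)$ annihilates $\krn\pic g$, so both kill $A$; since $A \subseteq \pic S$ and $|\pic S| = h_S$, the integer $h_S$ also annihilates $A$. By Bézout, $\gcd(I(\le f\le), I(g), h_S) = 1$ annihilates $A$, forcing $A = 0$. The resulting surjectivity of the pullback map is exactly the stated conclusion. The main substantive input beyond Theorem \ref{1} is Tamagawa's existence theorem for étale quasi-sections of constant degree; the remainder of the argument is formal.
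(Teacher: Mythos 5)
Your proof is correct and follows exactly the route the paper intends: Tamagawa's theorem supplies the finite \'etale quasi-sections so that $I(\le f\le)$ and $I(\lle g\le)$ are defined, Theorem \ref{1} (via smoothness of $f\times_{S}g$) gives the exact sequence with cokernel $\krn\pic f\cap\krn\pic g$, and the annihilation of this intersection by $I(\le f\le)$, $I(\lle g\le)$ and $h_{S}$ (Remark \ref{sect}(d) plus Lagrange) together with B\'ezout kills it. Your reading of the hypothesis as $\gcd(I(\le f\le),I(\lle g\le),h_{S})=1$ is the intended one, and your verification that $h_{S}$ is finite is the standard argument.
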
 

\medskip

If $S=\spec k$, where $k$ is a field, the canonical map $\pic X\lbe\oplus\e\pic Y\to \pic\be(X\be\times_{\lbe S} Y\e)$ was discussed by Ischebeck \cite[\S1]{isch}, Colliot-Th\'el\`ene and Sansuc \cite[Lemma 11, p.~188]{cts77} and Sansuc \cite[Lemma 6.6(i), p.~40]{san}. Over a more general base scheme $S$, we obtain

\begin{theorem} {\rm (= Corollary \ref{non3})}  Let $S$ be a locally noetherian normal scheme and let $f\colon X\to S$ and $g\colon Y\to S$ be faithfully flat morphisms locally of finite type. Assume that
\begin{enumerate}
\item[(i)]  $X$, $Y$ and $X\be\times_{S}\be Y$ are locally factorial,
\item[(ii)] for every point $s\in S$ of codimension $\leq 1$, the fibers $X_{\lbe s}$ and $Y_{\be s}$ are geometrically integral,
\item[(iii)] the \'etale index $I(\le f\!\times_{\be S}\be g\le)$ is defined and is equal to $1$, and
\item[(iv)] $\pic\be(X_{\eta}^{\le\rm s}\be\times_{k(\eta)^{\rm s}}\be Y_{\!\eta}^{\e\rm s}\le)^{\g(\eta)}=(\e\pic X_{\eta}^{\le\rm s})^{\g(\eta)}\be\oplus\be(\e\pic Y_{\!\eta}^{\e\rm s})^{\g(\eta)}$ for every maximal point $\eta$ of $S$, where $\g(\eta)={\rm Gal}(k(\eta)^{\rm s}/k(\eta))$.
\end{enumerate}	
Then there exists a canonical exact sequence of abelian groups
\[
0\to\pic S\to\pic X\be\oplus\be\pic Y\to \pic\be(X\!\times_{\be S}\! Y\e)\to 0.
\]
\end{theorem}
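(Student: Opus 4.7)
The plan is to reduce the statement, after localizing at each maximal point of $S$, to a diagram chase between three compatible Weil-divisor localization sequences for $X$, $Y$, and $W\be:=\be X\!\times_{\lbe S}\be Y$. Because a locally noetherian normal scheme is the disjoint union of its irreducible components, I may assume $S$ is integral with generic point $\eta$ and function field $K=k(\eta)$. Hypothesis (ii) together with flatness forces $X$, $Y$, $W$ to be irreducible, and combined with the normality implied by (i) they are integral; the local factoriality of (i) then identifies $\pic$ with the Weil divisor class group throughout. A further consequence of (ii) and flatness is that every non-dominant prime divisor on $X$ (resp.\ on $Y$, resp.\ on $W$) is the full fibre above some $s\in S^{(1)}$; moreover $[W_{\lbe s}]=p_{1}^{*}[X_{s}]=p_{2}^{*}[Y_{\!\lbe s}]$ and $[X_{s}]=f^{*}[\overline{\{s\}}]$, $[Y_{\!\lbe s}]=g^{*}[\overline{\{s\}}]$. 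The asserted first map is $N\mapsto(f^{*}N,g^{*}N)$ and the second is $(L,M)\mapsto p_{1}^{*}L-p_{2}^{*}M$, which indeed compose to zero.

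I would next exploit (iii). Composing an \'etale quasi-section of $\pi\be:=\be f\!\times_{\be S}\be g$ with each of the two projections yields an \'etale quasi-section of $f$ (resp.\ of $g$) of the same degree, so $I(f),I(g)\mid I(\pi)=1$; since the \'etale index annihilates the kernel of $\pic$-pullback, each of $f^{*},g^{*},\pi^{*}$ is injective on $\pic S$, which establishes injectivity of the leftmost map in the asserted sequence. Base-changing a quasi-section to $\eta$ gives $I(f_{\eta})=I(g_{\eta})=I(\pi_{\eta})=1$. A Hochschild--Serre/period-index argument (Hilbert~90 kills the kernel; the \'etale index kills the $\br K$-valued cokernel) then shows that $\pic Z\to\pic(Z\times_{K} K^{\rm s})^{\g(\eta)}$ is an isomorphism for each geometrically integral $K$-scheme $Z\in\{X_{\eta},Y_{\!\eta},W_{\be\eta}\}$, and hypothesis (iv) now supplies the crucial generic-fibre isomorphism
\[
\pic X_{\eta}\le\oplus\le\pic Y_{\!\eta}\;\isoto\;\pic\be(W_{\be\eta}).
\]

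The remainder is a diagram chase in the commutative diagram whose rows, for $Z\in\{X,Y,W\}$, are the localization sequences $\bigoplus_{s\in S^{(1)}}\Z\,[Z_{s}]\to\pic Z\to\pic Z_{\eta}\to 0$, with the map $\Z^{2}\to\Z$ on leftmost terms being $(a,b)\mapsto a+b$. For surjectivity of $\pic X\oplus\pic Y\to\pic W$: given $L\in\pic W$, lift $L_{\eta}$ to $(L_{1,\eta},L_{2,\eta})$ via the generic-fibre isomorphism, then to $(L_{1},L_{2})\in\pic X\oplus\pic Y$ along the localization sequences; one checks that $L-(p_{1}^{*}L_{1}-p_{2}^{*}L_{2})$ is a $\Z$-combination of vertical classes $[W_{\lbe s}]=p_{1}^{*}[X_{s}]$, hence lies in the image of $p_{1}^{*}$. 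For exactness in the middle: if $p_{1}^{*}L_{1}=p_{2}^{*}L_{2}$, restriction to the generic fibre forces $L_{i,\eta}=0$ by the direct-sum isomorphism, so $L_{1}=f^{*}N_{1}$ and $L_{2}=g^{*}N_{2}$ for some $N_{i}\in\pic S$, and then $\pi^{*}(N_{1}-N_{2})=p_{1}^{*}L_{1}-p_{2}^{*}L_{2}=0$ combined with injectivity of $\pi^{*}$ forces $N_{1}=N_{2}$. I expect the main obstacle to be the clean simultaneous handling of the three Hochschild--Serre edge maps (verifying in particular that the unit-group contributions on the geometric generic fibres are tamed by the \'etale-index hypothesis), together with a careful justification of the localization sequence in this locally noetherian normal setting; the final diagram chase is then essentially routine.
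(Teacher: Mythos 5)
Your overall strategy---reduce to the generic fibres via a divisorial localization sequence, then descend hypothesis (iv) from $k(\eta)^{\rm s}$ to $k(\eta)$, with the \'etale index supplying injectivity of $\pic S\to\pic X\oplus\pic Y$---is essentially the paper's (which runs through Theorem \ref{non2}, Corollary \ref{ray2} and Proposition \ref{rat}), and your surjectivity argument and final diagram chase are sound. But there is a genuine gap at the step you yourself flag as the crux. The claim that $\pic Z\to\pic(Z\times_{K}K^{\rm s})^{\g(\eta)}$ is an isomorphism for each of $Z=X_{\eta},\,Y_{\eta},\,X_{\eta}\times_{K}Y_{\eta}$ is false. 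The kernel of that map is $H^{1}(\g(\eta),K^{\rm s}[Z]^{*})$, and Hilbert 90 only kills the contribution of $(K^{\rm s})^{*}$; the lattice $K^{\rm s}[Z]^{*}/(K^{\rm s})^{*}$ of nonconstant units contributes in general, and the \'etale index does not tame it: for the norm-one torus $Z=R^{1}_{K'/K}\mathbb{G}_{m}$ one has $Z(K)\neq\emptyset$, hence $I(Z)=1$, yet $\pic Z\simeq\Z/2$ while $(\pic Z^{\rm s})^{\g}=0$. So hypothesis (iv), a statement about Galois invariants of \emph{geometric} Picard groups, cannot be transported to $K$ by three separate isomorphisms. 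What actually makes the descent work (Proposition \ref{rat} in the paper, following Sansuc's Lemma 6.6) is a \emph{simultaneous} comparison of the three Hochschild--Serre sequences: the defects $H^{i}(\g(\eta),K^{\rm s}[\,\cdot\,]^{*})$ do not vanish individually but cancel between $X_{\eta}\times Y_{\eta}$ and $X_{\eta}\oplus Y_{\eta}$, because Rosenlicht--Conrad additivity of units identifies $K^{\rm s}[X_{\eta}\times Y_{\eta}]^{*}$ with the amalgam of $K^{\rm s}[X_{\eta}]^{*}$ and $K^{\rm s}[Y_{\eta}]^{*}$ over $(K^{\rm s})^{*}$. The restriction--corestriction trick with $\gcd(I(X_{\eta}),I(Y_{\eta}))=1$ (which does follow from (iii)) is only needed to remove the rational-point hypothesis in that comparison; it is not a substitute for the unit-group additivity. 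Without this input your generic-fibre isomorphism $\pic X_{\eta}\oplus\pic Y_{\eta}\isoto\pic(X_{\eta}\times_{K}Y_{\eta})$ is unproved.

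A second, smaller issue: in the middle-exactness step you pass from $L_{1,\eta}=0$ to $L_{1}=f^{*}N_{1}$ with $N_{1}\in\pic S$. The localization sequence only exhibits $L_{1}$ as a $\Z$-combination of the fibre classes $[X_{s}]$, i.e.\ as the flat pullback of a \emph{Weil} divisor class on $S$; since $S$ is merely normal, not locally factorial, promoting that class to an element of $\pic S$ is a nontrivial descent of invertibility along $f$, which uses precisely the faithful flatness and the integrality of the fibres in codimension $\leq 1$. This is Raynaud's theorem (Proposition \ref{ray1}, i.e.\ EGA IV, 21.4.13) and should be invoked explicitly rather than absorbed into a ``careful justification of the localization sequence.''
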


We remark that hypothesis (iv) of the theorem holds in many cases of interest, e.g., if either $X_{\eta}^{\le\rm s}$ or $Y_{\!\eta}^{\le\rm s}$ is rational or if $X_{\eta}$ and $Y_{\!\eta}$ are smooth and projective and there exist no nonzero $k(\eta)$-homomorphisms between their corresponding Picard varieties. See Examples \ref{sav1} and \ref{sav2} below.

\smallskip

We now observe the following corollary of the theorem and of the preceding remark.

\begin{corollary}\label{2} Let $S$ be a connected Dedekind scheme with function field $K$ and let $G$ and $H$ be smooth $S$-group schemes with connected fibers. Assume that one of the following conditions holds:
\begin{enumerate}
\item[(i)] either $G_{\lbe K}$ or $H_{\lbe K}$ is rational over a separable closure of $K$ (e.g., reductive), or
\item[(ii)] $G_{\lbe K}\!$ and $H_{\lbe K}$ are abelian varieties such that $\Hom_{\lle K}\lbe(G_{\lbe K},H_{\lbe K})=0$.
\end{enumerate}
Then there exists a canonical exact sequence of abelian groups
\[
0\to\pic S\to\pic G\be\oplus\lbe\pic H\to \pic\be(\le G\!\times_{\lbe S}\!\le H\e)\to 0.
\]
In particular, every line bundle on $G\!\times_{\lbe S}\!\le H$ is isomorphic to a product of pullbacks of a line bundle on $G$ and a line bundle on $H$.
\end{corollary}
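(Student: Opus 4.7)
The plan is to apply the preceding theorem to the structure morphisms $f\colon G\to S$ and $g\colon H\to S$, so the task reduces to verifying its four hypotheses. Hypotheses (i)--(iii) should be essentially immediate: since $S$ is Dedekind (hence regular) and $G,H$ are smooth over $S$, all three of $G$, $H$ and $G\!\times_{\lbe S}\be H$ are regular, hence locally factorial by Auslander--Buchsbaum, yielding (i); each fiber $G_{\!s}$ is a smooth connected group scheme over $k(s)$ and is therefore geometrically integral (smoothness gives geometric reducedness, and smooth connected group schemes over a field are automatically geometrically connected), and similarly for $H_{\!s}$, yielding (ii); the product morphism $f\!\times_{\be S}\be g$ admits the identity section $e_{G}\!\times\be e_{H}$, whence $I(\le f\!\times_{\be S}\be g\e)=1$, yielding (iii).

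The substantive work lies in establishing hypothesis (iv) at the unique maximal point $\eta=\spec K$. In case (i), the assumption that $G_{\lbe K}$ (or $H_{\lbe K}$) is rational over $K^{\le\rm s}$ says precisely that $G_{\eta}^{\le\rm s}$ (or $H_{\eta}^{\le\rm s}$) is a rational $K^{\le\rm s}$-variety, so the first clause of the remark following the theorem yields (iv) at once; the parenthetical ``e.g., reductive'' is justified because a connected reductive group over a separably closed field is split and therefore rational via the Bruhat decomposition. In case (ii) the varieties $G_{\eta}$ and $H_{\eta}$ are smooth and projective, so the second clause of the remark reduces (iv) to verifying that $\Hom_{\lle K}\lbe(G_{\lbe K}^{\e t},H_{\lbe K}^{\e t})=0$, where $(-)^{t}$ denotes the dual (Picard) abelian variety. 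By contravariance of duality combined with biduality, $\Hom_{\lle K}\lbe(G_{\lbe K}^{\e t},H_{\lbe K}^{\e t})\simeq\Hom_{\lle K}\lbe(H_{\lbe K},G_{\lbe K})$; the plan is then to show that $\Hom_{\lle K}\lbe(G_{\lbe K},H_{\lbe K})=0$ forces $\Hom_{\lle K}\lbe(H_{\lbe K},G_{\lbe K})=0$ by a symmetry argument using $K$-polarizations. Concretely, choosing a $K$-polarization on each of $G_{\lbe K}$ and $H_{\lbe K}$ (which exists because abelian varieties are projective) yields $K$-isogenies to their duals, and inverting them in the $K$-isogeny category produces a natural isomorphism $\Hom_{\lle K}\lbe(H_{\lbe K},G_{\lbe K})\otimes\Q\isoto\Hom_{\lle K}\lbe(G_{\lbe K},H_{\lbe K})\otimes\Q$. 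Torsion-freeness of $\Hom$-groups of abelian varieties then forces both $\Hom$-groups to vanish simultaneously, securing (iv).

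The main obstacle I anticipate is this symmetry argument in case (ii); all other verifications are essentially formal. A subtle point is that the polarizations used must be defined over $K$ itself and not merely over $K^{\le\rm s}$, but this is automatic from projectivity. Once (i)--(iv) are verified, the preceding theorem delivers the claimed exact sequence, and the concluding assertion that every line bundle on $G\!\times_{\lbe S}\be H$ is a product of pullbacks follows immediately from the surjectivity of the map $\pic G\oplus\pic H\to\pic\be(G\!\times_{\lbe S}\be H\e)$.
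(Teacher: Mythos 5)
Your proposal is correct and follows exactly the route the paper intends: the corollary is deduced from Corollary \ref{non3} by checking hypotheses (i)--(iii) via smoothness, regularity of $S$, and the identity section, and hypothesis (iv) via Example \ref{sav1}(a) in the rational case and via Proposition \ref{nice0} (whose proof contains precisely your duality-plus-polarization symmetry argument showing $\Hom_{K}(G_{K},H_{K})=0$ is equivalent to the vanishing of $\Hom$ between the dual abelian varieties) in the abelian-variety case. No gaps.
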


\smallskip

The paper is organized as follows. In the preliminary Section \ref{two} we introduce the (\'etale) sheaf of relative units $\uxs$ and collect several basic facts about this and other objects. The (unavoidably) technical Section \ref{3} relates the \'etale cohomology groups of the multiplicative group scheme of a product to the corresponding objects for each of the factors, introducing in particular a number of functors to be discussed later in the paper. In Section \ref{four} we prove the generalized Rosenlicht additivity theorem. The proof of the key particular case of this theorem, namely Theorem \ref{ros1}, relies on the exactness of Conrad's sequence \eqref{c1} and on a density argument of Raynaud. In Section \ref{5} we discuss the  Picard group, using as a key ingredient a result of Raynaud that determines conditions under which a generically trivial invertible sheaf on an $S$-scheme $X$ is the pullback of an invertible sheaf on $S$ (see Proposition \ref{ray1}).

\section*{Acknowledgements}
I thank Mikhail Borovoi, K\c{e}stutis \v{C}esnavi\v{c}ius, Brian Conrad and Dino Lorenzini for helpful comments, Cyril Demarche for sending me an argument used in the proof of Theorem \ref{ros1}, Mathieu Florence for suggesting the restriction-corestriction argument used in the proof of Proposition \ref{kad} and C\'edric P\'epin for sending me the proof of Corollary \ref{ray2}. I also thank both referees of this paper for a thorough and speedy review process and the referee of \cite{ga2} for suggesting, via Example \ref{sav2}(a) below, that my original rationality hypotheses could be considerably relaxed (at least for projective varieties). Finally, I thank Chile's Fondecyt for financial support via grants 1120003 and 1160004.

\section{Preliminaries}\label{two}

If $A$ is an object of a category, the identity morphism of $A$ will be denoted by $1_{\be A}$. The category of abelian groups will be denoted by $\mathbf{Ab}$. An {\it exact and commutative} diagram in an abelian category is a commutative diagram with exact rows and columns.

\smallskip

The following lemma will be applied several times in this paper.

\begin{lemma}\label{ker-cok} If $f$ and $g$ are morphisms in an abelian category $\mathcal A$ such that $g\be\circ\!\be f$ is defined, then there exists a canonical exact sequence in $\mathcal A$ 
\[
0\to\krn f\to\krn\lbe(\e g\be\circ\!\be f\e)\to\krn g\to\cok f\to\cok\be(\e g\be\circ\!\be f\e)\to\cok g\to 0.
\]
\end{lemma}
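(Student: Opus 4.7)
The plan is to derive the sequence from a single application of the Snake Lemma, preceded by a small reduction that ensures its hypotheses are met. Write $f\colon A\to B$ and $g\colon B\to C$. Since $f$ annihilates $\krn f$, it factors canonically through the quotient as a monomorphism $\tilde f\colon A/\krn f\to B$, and for the same reason $g\circ f$ factors through a morphism $\widetilde{g\circ f}\colon A/\krn f\to C$.

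Next, I would form the commutative diagram in $\mathcal A$ with exact rows
\[
\begin{CD}
0 @>>> A/\krn f @>{\tilde f}>> B @>{\pi}>> \cok f @>>> 0 \\
@. @VV{\widetilde{g\circ f}}V @VV{g}V @VV{0}V @. \\
0 @>>> C @>{\id_{\e C}}>> C @>>> 0 @>>> 0,
\end{CD}
\]
where $\pi$ is the canonical projection, and apply the Snake Lemma. This produces an exact sequence
\[
0\to\krn\widetilde{g\circ f}\to\krn g\to\cok f\to\cok\widetilde{g\circ f}\to\cok g\to 0.
\]
The canonical identifications $\krn\widetilde{g\circ f}=\krn(g\circ f)/\krn f$ and $\cok\widetilde{g\circ f}=\cok(g\circ f)$ (the latter because $\widetilde{g\circ f}$ and $g\circ f$ have the same image in $C$) then allow one to splice in the tautological short exact sequence $0\to\krn f\to\krn(g\circ f)\to\krn(g\circ f)/\krn f\to 0$ at the left end, yielding the desired seven-term sequence.

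No step is genuinely difficult: the Snake Lemma holds in any abelian category, so no appeal to an embedding theorem is needed. The only point that warrants a brief verification is that the Snake Lemma's connecting morphism $\krn g\to\cok f$ coincides with the natural composition $\krn g\into B\onto\cok f$, and that the map $\krn(g\circ f)/\krn f\to\krn g$ produced by the Snake Lemma is the one induced by $f$; both identifications follow at once from an unwinding of the explicit Snake construction, and together they guarantee that the resulting exact sequence is canonical in $f$ and $g$.
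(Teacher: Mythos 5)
Your proof is correct. The paper does not actually prove this lemma; it simply cites Brinkmann--Puppe (Hilfssatz 5.5.2), so your argument is a genuine self-contained substitute rather than a reproduction of the paper's reasoning. The reduction is sound: in an abelian category $f$ factors as an epimorphism $A\onto A/\krn f$ followed by a monomorphism $\tilde f$, the top row $0\to A/\krn f\to B\to\cok f\to 0$ is exact because $\tilde f$ and $f$ have the same image and hence the same cokernel, the left square commutes since $\widetilde{g\circ f}=g\circ\tilde f$ by uniqueness of the factorization, and the identifications $\krn\widetilde{g\circ f}=\krn(g\circ f)/\krn f$ and $\cok\widetilde{g\circ f}=\cok(g\circ f)$ hold because the projection $A\onto A/\krn f$ is epi. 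The splice with $0\to\krn f\to\krn(g\circ f)\to\krn(g\circ f)/\krn f\to 0$ is then routine. One terminological slip: the map $\krn g\to\cok f$ in your sequence is not the connecting morphism of the snake lemma but the induced map on kernels of the vertical arrows coming from $B\to\cok f$ (which is why it is visibly the composite $\krn g\into B\onto\cok f$); the connecting morphism in your diagram is instead $\cok f\to\cok(g\circ f)$, and one should check it is the map induced by $g$, which it is by the usual construction. This does not affect the validity of the argument. What your route buys is independence from the external reference, at the cost of invoking the snake lemma in a general abelian category (itself usually justified via the Freyd--Mitchell embedding or a direct diagram-chase with generalized elements, so the appeal to "no embedding theorem needed" is slightly optimistic but harmless).
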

\begin{proof} See, for example, \cite[Hilfssatz 5.5.2, p.~45]{bp}.
\end{proof}

Let $S$ be a scheme. If $S$ is clear from the context and $X$ is an $S$-scheme, $X\be\times_{S}\be T$ will be denoted by $X_{\lbe T}$. If $f\colon X\to Y$ is an $S$-morphism of schemes, we will write $f_{\lbe T}$ for the $T$-morphism of schemes $f\times_{S}1_{\lbe T}\colon X_{\lbe T}\to Y_{ T}$. If $h\colon Z\to \bg_{m,\e S}$ is a morphism of $S$-schemes, we will write $h^{-1}$ for the composition of $h$ with the inversion automorphism of $\bg_{m,\e S}$.

\smallskip

Consider the following diagram in the category of schemes
\[
\xymatrix{X_{T}\ar[d]\ar[r]& X\ar[d]\\
T\ar[r]& S.}
\]
Let $Y$ be any $S$-scheme. The universal property of the fiber product yields canonical bijections
\[
\xymatrix{\Hom_{\le X}(\le X_{T},Y_{\be X})\ar[dr]_(.45){\simeq}\ar[rr]^{\Phi}_{\simeq}&&\Hom_{\e T}\le(\le X_{T},Y_{T}) \ar[dl]^(.45){\simeq}\\
& \Hom_{\le S}\le(\le X_{T},Y)&}
\]
where the left (respectively, right)-hand oblique map is induced by composition with the first projection morphism $Y_{\be X}\to Y$ (respectively, $Y_{T}\to Y\e$) and the horizontal arrow is defined so that the triangle commutes. Taking $Y=\bg_{m,\le S}$ above, we obtain an isomorphism of abelian groups
\begin{equation}\label{cb}
\Phi\colon \Hom_{\le X}(\le X_{\lbe T},\bg_{m,\le X})\overset{\!\sim}{\to} \Hom_{\e T}\le(\le X_{\lbe T},\bg_{m,\le T})
\end{equation}
which maps an $X$-morphism $\beta\colon X_{T}\to \bg_{m,\le X}$ to the unique $T$-morphism $\Phi(\le\beta)\colon X_{T}\to \bg_{m,\le T}$ such that the diagram 
\[
\xymatrix{X_{T}\ar[d]_{\Phi(\le\beta)}\ar[r]^(.45){\beta}& \bg_{m,\le X}\ar[d]\\
\bg_{m,\le T}\ar[r]& \bg_{m,\le S}}
\]
commutes (the unlabeled maps above are projections onto first factors).

We also need to recall that, if $G$ is an $S$-group scheme and $Z\to T\to S$ are morphisms of schemes, then there exists a canonical isomorphism of abelian groups
\begin{equation}\label{bc2}
G_{T}(Z\le)=G(Z\le),
\end{equation}
where on the left $Z$ is regarded as a $T$-scheme via $Z\to T$ and, on the right, $Z$ is regarded as an $S$-scheme via the composition $Z\to T\to S$.

If $T\to S$ is a morphism of schemes, we will make the identification
\begin{equation}\label{gid}
\bg_{m,\le S}(T\le)=\Hom_{S}\le(\le T,\bg_{m,\le S})=\g(T,\mathcal O_{T}^{\e *}).
\end{equation}

Now let $S_{\et}$ denote the small \'etale site over $S$. The category of abelian sheaves on $S_{\et}$ will be denoted by $S_{\et}^{\le\sim}$. If $F\in S_{\et}^{\le\sim}$ and $g\colon T\to S$ is an \'etale morphism, then the inverse image sheaf $g^{*}F\in T_{\et}^{\e\sim}$ is given by
\begin{equation}\label{inv}
(\e g^{*}\be F\e)(Z)=F(Z\le)
\end{equation}
for any \'etale morphism $Z\to T$, where, on the right, $Z$ is regarded as an $S$-scheme via the composite morphism $Z\to T\overset{\!g}{\to} S$. See, e.g., \cite[p.~89]{t}. If $G$ is an $S$-group scheme, then \eqref{bc2} and \eqref{inv} yield the following equality in $T_{\et}^{\e\sim}$: 
\begin{equation}\label{inv2}
g^{*}G=G_{T}.
\end{equation}

\medskip

If $k$ is a field, we will write $\ks$ for a fixed separable algebraic closure of $k$ and $\g=\mathrm{Gal}\le(\ks/k)$ for the corresponding absolute Galois group. If $S$ is a scheme and $\eta$ is a maximal point of $S$, we will write $\g(\eta)=\mathrm{Gal}\le(k(\eta)^{\rm s}\lbe/\le k)$. 

\smallskip

Let $k$ be a field and let $X$ be a $k$-scheme such that $X\lbe(k^{\e\prime}\le)\neq \emptyset$ for some finite subextension $k^{\e\prime}\be/k$ of $\ks\be/k$. This is the case, for example, if $X$ is geometrically reduced and locally of finite type over $k$ (see \cite[\S3.2, Proposition 2.20, p.~93]{liu} for the finite type case and note that the more general case can be obtained by applying the indicated reference to a nonempty open affine subscheme of $X$).

If $X$ is as above, the {\it separable index} of $X$ over $k$ is the integer
\begin{equation}\label{ind}
I(X)=\gcd\{[k^{\e\prime}\be\colon\be k\e]\colon \text{$k^{\e\prime}\!/\le k\subset \ks\be/k$ finite and $X\!\left(k^{\e\prime}\e\right)\neq \emptyset$}\}.
\end{equation}

\smallskip

Recall that, if $X$ is a topological space, the {\it maximal points} of $X$ are the generic points of the irreducible components of $X$. If $f\colon X\to S$ is a morphism of schemes, a {\it maximal fiber of $f$} is a fiber of $f$ over a maximal point of $S$.

\begin{lemma}\label{mf} Let $f\colon X\to S$ and $g\colon T\to S$ be morphisms of schemes. If $g$ is faithfully flat and locally of finite presentation, then the maximal fibers of $f_{\le T}\colon X_{T}\to T$ are geometrically connected (respectively, geometrically reduced) if, and only if, the maximal fibers of $f$ are geometrically connected (respectively, geometrically reduced).
\end{lemma}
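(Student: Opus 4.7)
The plan is to reduce the statement to a comparison of fibers over corresponding maximal points of $S$ and $T$, and then invoke standard stability and descent facts for geometric connectedness and geometric reducedness under field extensions.

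First I would show that $g$ induces a surjection from the set of maximal points of $T$ onto the set of maximal points of $S$. Since $g$ is flat, going-down holds: the image $g(\xi)$ of a maximal point $\xi$ of $T$ admits no proper generization in $S$ (any such would lift to a proper generization of $\xi$, contradicting maximality), so it is itself a maximal point. Conversely, given a maximal point $\eta$ of $S$, faithful flatness of $g$ yields some $\xi_{0}\in T$ with $g(\xi_{0})=\eta$, and choosing $\xi$ to be the generic point of any irreducible component of $T$ containing $\xi_{0}$ gives a generization of $\xi_{0}$ in $T$; by continuity, $g(\xi)$ is a generization of $\eta$, hence equals $\eta$ by maximality. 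Thus the maximal points of $T$ lying above a given maximal point of $S$ form a nonempty set.

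Next I would identify the maximal fibers of $f_{T}$. For each maximal point $\xi$ of $T$ with $\eta=g(\xi)$, associativity of fiber product yields
\[
(X_{T})_{\xi}=X\times_{S}\spec k(\xi)=X_{\eta}\times_{\spec k(\eta)}\spec k(\xi),
\]
so the maximal fiber of $f_{T}$ at $\xi$ is the base change of the maximal fiber $X_{\eta}$ of $f$ along the field extension $k(\eta)\hookrightarrow k(\xi)$.

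The remaining step is to verify that, for any field extension $K/k$ and any $k$-scheme $Z$, the scheme $Z$ is geometrically connected (respectively, geometrically reduced) over $k$ if and only if $Z_{K}$ is geometrically connected (respectively, geometrically reduced) over $K$. The forward direction is standard. For the converse, fixing an algebraic closure $\overline{K}$ of $K$ containing a fixed algebraic closure $\overline{k}$ of $k$, the hypothesis gives that $Z_{\overline{K}}=Z_{\overline{k}}\otimes_{\overline{k}}\overline{K}$ is connected (respectively, reduced); faithful flatness of $\overline{k}\to\overline{K}$ then transfers the property back to $Z_{\overline{k}}$, since the projection $Z_{\overline{K}}\to Z_{\overline{k}}$ is surjective (yielding connectedness) and the morphism of structure sheaves $\mathcal O_{Z_{\overline{k}}}\to\mathcal O_{Z_{\overline{K}}}$ is injective (yielding reducedness, as a subring of a reduced ring is reduced). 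Combined with the previous two steps, this gives both directions of the lemma. The main obstacle is simply the bookkeeping with maximal points, which is handled cleanly by going-down for flat morphisms; the descent of the relevant geometric properties along a field extension is a routine consequence of the faithful flatness of any field extension.
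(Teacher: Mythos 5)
Your proof is correct and follows essentially the same route as the paper: reduce to the fibers over corresponding maximal points via the identification $(X_T)_{\xi}=X_{\eta}\times_{\spec k(\eta)}\spec k(\xi)$, after showing that $g$ surjects maximal points of $T$ onto maximal points of $S$. The only (harmless) variations are that you obtain this surjectivity directly from going-down for flat morphisms where the paper cites openness of $g$ together with EGA~${\rm IV}_{1}$, 1.10.4, and that you spell out the ascent/descent of geometric connectedness and reducedness along field extensions, which the paper leaves implicit.
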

\begin{proof} By \cite[${\rm IV}_{2}$, Theorem 2.4.6]{ega}, $g$ is open. Thus, by \cite[${\rm IV}_{1}$, Corollary 1.10.4 and its proof]{ega}, $g$ induces a surjection from the set of maximal points of $T$ to the set of maximal points of $S$. Now, if $\eta$ is a maximal point of $S$ and $\eta^{\e\prime}$ is a maximal point of $T$ lying over $\eta$, then $X_{\eta}\be\times_{\eta}\be\eta^{\e\prime}=X_{T}\be\times_{T}\be\eta^{\e\prime}$, whence the lemma follows.
\end{proof}

Recall that a morphism of schemes $f\colon X\to S$ is called {\it schematically dominant} if the canonical homomorphism $f^{\#}\colon \s O_{\be S}\to f_{*}\s O_{\be X}$ of Zariski sheaves on $S\e$ is injective \cite[\S5.4]{ega1}. For example, if $S$ is reduced and $f\colon X\to S$ is dominant (e.g., $S=\spec k$, where $k$ is a field), then $f$ is schematically dominant \cite[Proposition 5.4.3, p.~284]{ega1}.

\begin{lemma} \label{sor} The following holds.
\begin{enumerate}
\item[(i)] A faithfully flat morphism is schematically dominant.
\item[(ii)] If $f\colon X\to S$ is schematically dominant and $T\to S$ is flat and  locally of finite presentation, then $f_{T}\colon X_{T}\to T$ is schematically dominant.
\item[(iii)] If $f\colon X\to S$ has a section, then $f$ is schematically dominant.
\end{enumerate}
\end{lemma}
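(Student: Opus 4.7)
The plan is to handle each part by reducing to affine-local ring-theoretic statements.

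For (i), let $V=\spec A$ be any affine open of $S$ and choose an affine open cover $\{\spec B_i\}_{i\in I}$ of $f^{-1}(V)\subset X$. Flatness of $f$ makes each $A\to B_i$ flat, and the surjection $\bigsqcup_i\spec B_i\to V$ is faithfully flat by surjectivity of $f$, so the induced map $A\to\prod_i B_i$ is injective. The sheaf axiom supplies an injection $\Gamma(f^{-1}(V),\mathcal O_X)\hookrightarrow\prod_i B_i$ through which $A\to\prod_i B_i$ factors; hence $A\hookrightarrow\Gamma(f^{-1}(V),\mathcal O_X)$. Varying $V$ proves that $f^\#$ is injective as a sheaf map.

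For (ii), I would check the injectivity of $\mathcal O_T\to(f_T)_*\mathcal O_{X_T}$ locally on $T$. Choose an affine open $W=\spec A'\subset T$ whose image under $g$ lies in an affine open $V=\spec A\subset S$, so that $A\to A'$ is flat and of finite presentation. Cover $f^{-1}(V)$ by affine opens $\spec B_i$; the hypothesis on $f$ together with the sheaf axiom yield an injection $A\hookrightarrow\prod_i B_i$. Since $\{\spec(B_i\otimes_A A')\}_i$ is an affine cover of $f_T^{-1}(W)$, the sheaf axiom gives $\Gamma(f_T^{-1}(W),\mathcal O_{X_T})\hookrightarrow\prod_i(B_i\otimes_A A')$, so it suffices to show the composite $A'\to\prod_i(B_i\otimes_A A')$ is injective. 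Flatness of $A'$ over $A$ provides an injection $A'=A\otimes_A A'\hookrightarrow(\prod_i B_i)\otimes_A A'$, and the finite presentation of $A'$ as an $A$-algebra is then exploited to ensure that the composition with the natural map $(\prod_i B_i)\otimes_A A'\to\prod_i(B_i\otimes_A A')$ remains injective.

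For (iii), if $\sigma\colon S\to X$ is a section of $f$, then $\sigma^{-1}(f^{-1}(V))=V$ for every open $V\subset S$, so restriction of sections along $\sigma$ defines a retraction of $f^\#\colon\mathcal O_S\to f_*\mathcal O_X$, which is therefore split injective.

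The main obstacle lies in the final step of part (ii): the canonical map $(\prod_i B_i)\otimes_A A'\to\prod_i(B_i\otimes_A A')$ need not be an isomorphism when $I$ is infinite, and one must use the finite presentation of $A'$ as an algebra together with the full strength of the schematic dominance hypothesis (on every open subset of $S$, not merely at the global level) to navigate this gap. The hypothesis that $g$ be locally of finite presentation — rather than merely flat — is what makes this step go through.
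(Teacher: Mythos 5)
Parts (i) and (iii) of your argument are sound. Your proof of (iii) is essentially the paper's own (pullback along the section $\sigma$ retracts $f^{\#}$, since $(f\circ\sigma)^{\#}=1$), and for (i) you give a direct affine-local argument where the paper simply cites \cite[Proposition 52, p.~10]{pic}; it works, though you should spell out why $A\to\prod_i B_i$ is injective: for $0\neq a\in A$ pick a prime $\mathfrak p\supseteq\mathrm{Ann}_A(a)$, lift it by surjectivity of $f$ to a prime $\mathfrak q$ of some $B_i$, and use that the flat local homomorphism $A_{\mathfrak p}\to (B_i)_{\mathfrak q}$ is faithfully flat, hence injective.

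Part (ii), however, is not proved. The reduction to the injectivity of $A'\to\prod_i(B_i\otimes_A A')$ is valid, but the step you describe as ``exploiting the finite presentation of $A'$'' is the entire content of the statement, and you leave it open --- as you yourself acknowledge in your closing paragraph. Flatness of $A\to A'$ only gives injectivity of $A'\hookrightarrow(\prod_i B_i)\otimes_A A'$, and the canonical map $(\prod_i M_i)\otimes_A N\to\prod_i(M_i\otimes_A N)$ can genuinely fail to be injective for infinite $I$ even when $N$ is flat (e.g.\ $A=\Z$, $N=\Q$, $M_i=\Z/p_i$ for distinct primes $p_i$); a finitely presented \emph{algebra} is not a finitely presented \emph{module}, so the usual commutation with infinite products does not apply. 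Nothing in your argument rules out a nonzero element of $A'$ dying in every $B_i\otimes_A A'$. Closing this gap is exactly what the reference invoked by the paper, \cite[$\mathrm{IV}_{3}$, Theorem 11.10.5(ii)]{ega}, accomplishes, and its proof is not a formal manipulation: it goes through the theory of (weakly) associated points and relative assassins, using flatness and local finite presentation of $g$ in an essential way. Note that if $f$ were in addition quasi-compact and quasi-separated, the infinite product would disappear and flat base change ($g^{*}f_{*}\s O_{\be X}\simeq (f_T)_{*}\s O_{\be X_T}$ with $g^{*}$ exact) would settle (ii) with $g$ merely flat; the whole difficulty, which your covering argument must confront head-on, is the non-quasi-compact case. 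As written, part (ii) has a genuine gap, and the honest options are either to supply the associated-points argument or to do what the paper does and cite EGA.
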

\begin{proof} For (i), see, e.g., \cite[Proposition 52, p.~10]{pic}. For (ii), see \cite[$\textrm{IV}_{3}$, Theorem 11.10.5(ii)]{ega}. To prove (iii), let $\sigma\colon S\to X$ be a section (i.e., right inverse) of $f$. Then $f_{\lbe *}(\sigma^{\#})\colon f_{\lbe *}\s O_{\be X}\to f_{\lbe *}\sigma_{*}\s O_{\be S}=\s O_{\be S}$ is a retraction (i.e., left inverse) of $f^{\#}\colon \s O_{\be S}\to f_{*}\s O_{\be X}$. The injectivity of $f^{\#}$ follows.
\end{proof}

Let $f\colon X\to S$ be a morphism of schemes and let
\begin{equation}\label{fb}
f^{\e\flat}\colon \bg_{m,S}\to f_{\lbe *}\bg_{m,X}
\end{equation}
be the canonical morphism of abelian sheaves on $S_{\et}$ induced by $f$. Thus, if $T\to S$ is \'etale, then $f^{\e\flat}\lbe(T\le)$ is the map
\[
f^{\e\flat}\lbe(T\le)\colon \Hom_{S}(\le T,\bg_{m,S})\to \Hom_{X}(\le X_{T},\bg_{m,X}),\, c\mapsto c_{\lbe X}.
\]
If we identify $\Hom_{X}(\le X_{T},\bg_{m,X})$ and $\Hom_{T}(\le X_{T},\bg_{m,T})$ via the isomorphism \eqref{cb}, then $f^{\e\flat}\lbe(T\le)$ is identified with the homomorphism
\[
f^{\e\flat}(T\le)\colon \Hom_{S}(\le T,\bg_{m,S})\to \Hom_{\e T}(\le X_{T},\bg_{m,\le T})
\]
which maps $c\in \Hom_{S}(\le T,\bg_{m,S})$ to the $T$-morphism $X_{T}\to \bg_{m,T}=\bg_{m,S}\be\times_{S}\be T$ whose first component $X_{T}\to \bg_{m,S}$ factors as $X_{T}\to T\overset{\!c}{\to}\bg_{m,S}$. In particular,
if $c\in \Hom_{S}(\le S,\bg_{m,S})$ is a section of $\bg_{m,S}$ over $S$, then
$f^{\e\flat}\be(S\le)(c)\in \Hom_{\e S}(\le X,\bg_{m,\le S})$ is given by
\begin{equation}\label{iden}
f^{\le\flat}\be(S\le)(c)=c\lbe\circ\be f.
\end{equation}

Let $Y\overset{\!g}{\to} X\overset{\!f}{\to}S$ be morphisms of schemes. We will make the identification $(\e f\be\circ\be g)_{\lbe *}=f_{\le *}\be\circ g_{\le *}$. Then $(\e f\be\circ\be g)^{\le\flat}\colon \bg_{m,S}\to f_{\lbe *}(\e g_{*}\bg_{m,\le Y})$ factors as
\[
\bg_{m,S}\overset{f^{\le\flat}}{\to}f_{*}\bg_{m,X}\overset{f_{\lbe *}\lbe(\le g^{\le\flat}\lbe)}{\lra}f_{*}\lbe(\e g_{*}\bg_{m,Y}\lbe),
\]
i.e., 
\begin{equation}\label{marv}
f_{*}\lbe(\e g^{\flat})\be \circ\be  f^{\le\flat}=(\e f\be\circ\be g\le)^{\flat}.
\end{equation}
In particular, if $f$ has a section $\sigma\colon S\to X$, then 
\begin{equation}\label{marv2}
f_{\lbe *}\lbe(\sigma^{\le\flat})\be \circ\be  f^{\le\flat}=1_{\le\bg_{m,S}}.
\end{equation}
We now introduce the {\it \'etale sheaf of relative units on $S$}:
\begin{equation}\label{uxs2}
\hskip 1em\uxs=\cok\be f^{\e\flat}.
\end{equation}

\begin{lemma}\label{sch-dom} Let $f\colon X\to S$ be a schematically dominant morphism of schemes.
Then $f^{\e\flat}$ is an injective morphism of abelian sheaves on $S_{\et}$. Consequently, 
\[
1\to\bg_{m,\le S}\overset{\! f^{\flat}}{\to}f_{\lbe *}\bg_{m,\le X}\to\uxs\to 1
\]
is an exact sequence of \'etale sheaves on $S$. 
\end{lemma}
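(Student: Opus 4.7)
The plan is to verify injectivity of $f^{\e\flat}$ as a morphism of abelian sheaves on $S_{\et}$ by checking that the induced map on sections
\[
f^{\e\flat}(T)\colon \bg_{m,\le S}(T) \to (f_{\lbe *}\bg_{m,\le X})(T) = \bg_{m,\le X}(X_{T})
\]
is injective for every \'etale morphism $T\to S$; this clearly implies that the kernel sheaf of $f^{\e\flat}$ is zero. Under the identifications \eqref{gid}, \eqref{cb} and \eqref{bc2}, this map is the one sending $c\in \g(T,\s O_{T}^{\e *})$ to its pullback $c_{X_{T}}\in \g(X_{T},\s O_{X_{T}}^{\e *})$ along $f_{T}\colon X_{T}\to T$, i.e., it is the restriction to units of the map on global sections induced by the ring-sheaf morphism $f_{T}^{\#}\colon \s O_{T}\to (f_{T})_{*}\s O_{X_{T}}$.

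Now, since $T\to S$ is \'etale, it is in particular flat and locally of finite presentation, so Lemma \ref{sor}(ii) applies and shows that $f_{T}\colon X_{T}\to T$ is schematically dominant. By definition, this means that $f_{T}^{\#}$ is an injective morphism of Zariski sheaves on $T$. Since the global sections functor $\g(T,-)$ is left exact, the induced map $\g(T,\s O_{T})\to \g(X_{T},\s O_{X_{T}})$ is injective, and hence so is its restriction to units $\g(T,\s O_{T}^{\e *})\to\g(X_{T},\s O_{X_{T}}^{\e *})$. This establishes the injectivity of $f^{\e\flat}(T)$ and therefore of $f^{\e\flat}$.

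There is no real obstacle here; the entire argument rests on the base-change stability of schematic dominance recorded in Lemma \ref{sor}(ii) together with the left exactness of the global sections functor. The displayed exact sequence is then immediate from the definition \eqref{uxs2} of $\uxs$ as the cokernel of $f^{\e\flat}$.
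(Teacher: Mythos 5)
Your argument is correct and is essentially the proof given in the paper: both reduce to the injectivity of $f^{\e\flat}(T)$ for every \'etale $T\to S$, invoke Lemma \ref{sor}(ii) to see that $f_{T}$ is schematically dominant, and then pass to global sections (of units) of the injective morphism $f_{T}^{\e\#}$ of Zariski sheaves. Nothing further is needed.
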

\begin{proof} We observe that, if $T\to S$ is flat and locally of finite presentation and $f$ is schematically dominant, then, by Lemma \ref{sor}(ii), $f_{T}\colon X_{T}\to T$ is schematically dominant as well. Consequently, if $T\to S$ is \'etale, then $f^{\e\#}_{T}\colon\mathcal O_{T}^{\e *}\to (f_{T})_{*}\mathcal O_{\be X_{T}}^{\e *}$ is an injective homomorphism of Zariski sheaves on $T$. Thus the map $\g(T,\mathcal O_{T}^{\e *})\to \g(X_{T},\mathcal O_{\be X_{T}}^{\e *})$ is injective. Via the identifications \eqref{gid}, the latter map is identified with $f^{\e\flat}\be(T\le)$, which completes the proof.
\end{proof}

\begin{lemma}\label{sd-ci} Let $f\colon X\to S$ be a morphism of schemes and let $g\colon T\to S$ be an \'etale morphism. Then $g^{\lle *}\uxs=U_{\be X_{\lbe T}\lbe/\le T}$ in $T_{\et}^{\e\sim}$. In particular, if $T^{\e\prime}\to T$ is an \'etale morphism, then $U_{\be X_{\lbe T}\lbe/\le T}\le(T^{\e\prime}\e)=\uxs(T^{\e\prime}\le)$.
\end{lemma}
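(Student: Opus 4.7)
The plan is to combine the definition $\uxs=\cok f^{\e\flat}$ from \eqref{uxs2} with the exactness of the inverse image functor $g^{\lle *}\colon S_{\et}^{\le\sim}\to T_{\et}^{\e\sim}$, which, as a left adjoint, preserves cokernels. Thus it suffices to exhibit canonical isomorphisms $g^{\lle *}\bg_{m,\le S}\simeq\bg_{m,\le T}$ and $g^{\lle *}\lbe(f_{\lbe *}\bg_{m,\le X})\simeq (f_{T})_{*}\bg_{m,\le X_{T}}$ that carry $g^{\lle *}\lbe(f^{\e\flat})$ to $(f_{T})^{\e\flat}$; taking cokernels will then yield $g^{\lle *}\uxs=U_{\be X_{\lbe T}\lbe/\le T}$.

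The first isomorphism is just \eqref{inv2} applied to $g$. For the second, I would invoke \'etale base change along the cartesian square with horizontal arrows $g\colon T\to S$ and $g_{\be X}\colon X_{T}\to X$: for any \'etale $Z\to T$, the composite $Z\to T\to S$ is \'etale as well, and repeated application of \eqref{inv} together with the canonical equality $X\!\times_{\be S}\be Z=X_{T}\!\times_{T}\be Z$ of $S$-schemes yields
\[
\bigl(g^{\lle *}\lbe(f_{\lbe *}\bg_{m,\le X})\bigr)(Z)=\bg_{m,\le X}(X\!\times_{\be S}\be Z)=\bg_{m,\le X_{T}}(X_{T}\!\times_{T}\be Z)=\bigl((f_{T})_{*}\bg_{m,\le X_{T}}\bigr)(Z).
\]
Verifying that under these identifications $g^{\lle *}\lbe(f^{\e\flat})$ corresponds to $(f_{T})^{\e\flat}$ is then a routine check on sections: by the explicit description of $f^{\e\flat}\lbe(Z\le)$ recalled in the paragraph preceding \eqref{iden}, both morphisms send a section $c$ to the pullback of $c$ along the projection $X_{T}\!\times_{T}\be Z=X\!\times_{\be S}\be Z\to Z$, the equality holding because the formation of $c_{\be X}=c\times_{S}\lbe 1_{\le X}$ is compatible with base change along $g$.

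The \emph{in particular} assertion is then immediate: for $T^{\e\prime}\to T$ \'etale, the composite $T^{\e\prime}\to S$ is also \'etale, so formula \eqref{inv} applied to the equality just established gives $U_{\be X_{\lbe T}\lbe/\le T}\le(T^{\e\prime}\e)=(g^{\lle *}\uxs)(T^{\e\prime}\e)=\uxs(T^{\e\prime}\le)$. The only mildly technical point is disentangling the compatibilities among \eqref{inv}, \eqref{inv2} and the construction of $f^{\e\flat}$; beyond this book-keeping, no substantive obstacle arises.
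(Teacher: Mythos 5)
Your proposal is correct and follows essentially the same route as the paper: both arguments compute sections of $g^{*}\lbe(f_{\lbe *}\bg_{m,\le X})$ on an \'etale $Z\to T$ via \eqref{inv}, \eqref{bc2} and the identification $X\!\times_{\be S}\be Z=X_{T}\!\times_{T}\be Z$ to get $(f_{T})_{*}\bg_{m,\le X_{T}}$, identify $g^{*}\lbe(f^{\e\flat})$ with $f_{T}^{\e\flat}$ under this and \eqref{inv2}, and pass to cokernels using the exactness of $g^{*}$. The only cosmetic difference is that you make the cokernel-preservation of $g^{*}$ explicit (as a left adjoint), which the paper leaves implicit.
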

\begin{proof} Let $Z\to T$ be an \'etale morphism. By \eqref{bc2}, \eqref{inv} and the identifications $T_{\be X}=T\times_{S}X=X\times_{S}T=X_{T}$, we have
\[
\begin{array}{rcl}
(\e g^{*}\be f_{\lbe *}\bg_{m,\le X})(Z\e)&=&(\e f_{\lbe *}\bg_{m,\le X})(Z\e)=\bg_{m,\le X}(Z_{X})\\
&=&\bg_{m,\le X_{T}}(Z\be\times_{T}\be X_{T})=((f_{T}\lbe)_{*}\bg_{m,\e X_{T}})(Z\e).
\end{array}
\]
Thus $g^{*}f_{\lbe *}\bg_{m,\le X}=(f_{T})_{*}\bg_{m,\e X_{T}}$ in $T_{\et}^{\e\sim}$. On the other hand,  by \eqref{inv2}, $g^{*}\e\bg_{m,\le S}=\bg_{m,\le T}$ in $T_{\et}^{\e\sim}$. Under these identifications, the map $g^{*}(f^{\le\flat})\colon g^{*}\le\bg_{m,\le S}
\to g^{*}f_{\lbe *}\bg_{m,\le X}$ is identified with $f_{T}^{\e\flat}$, whence the lemma follows.
\end{proof}

If $f\colon X\to S$ is schematically dominant and $T\to S$ is \'etale then, by Lemma \ref{sch-dom}, the sequence of \'etale sheaves on $T$
\begin{equation}\label{useq}
0\to\bg_{m,\le T}\overset{\be f_{T}^{\le\flat}}{\lra}(f_{T})_{*}\bg_{m,\le X_{T}}\to U_{\lbe X_{T}\be/\le T}\to 0
\end{equation}
is exact. Since $U_{\lbe X_{T}\be/\le T}\le(\le T\e)=\uxs(\le T\e)$ by Lemma \ref{sd-ci}, \eqref{bc2} and the preceding sequence induce an exact sequence of abelian groups
\begin{equation}\label{useq2}
0\to\bg_{m,\le S}(T\e)\to \bg_{m,\le S}(X_{T}\lbe)\to\uxs(\le T\e)\to\pic \e T\to H^{1}\lbe(T_{\et},(f_{T})_{*}\bg_{m,\le X_{T}}\be),
\end{equation}
where the last homomorphism is the map $H^{1}\be(f_{\lbe T}^{\le\flat}\be)$. We will write
\begin{equation}\label{uxss}
\uxss=\uxs(\lbe S\le).
\end{equation}
A comment on notation is in order. By \eqref{comp} and \eqref{leray0} below, the canonical map $\pic\be f\colon \pic S\to \pic X$ factors as
\[
\pic S\overset{H^{1}\be(\le f^{\le\flat})}{\lra}H^{1}(S_{\et},f_{*}\bg_{m,X})\hookrightarrow
\pic X.
\]
Thus \eqref{useq2} yields an exact sequence
\[
0\to\bg_{m,\le S}(S\e)\to \bg_{m,\le S}(X\lbe)\to\uxss\to\pic \e S\overset{\!\pic\be f}{\lra} \pic X.
\]
It follows that the group $\uxss$ \eqref{uxss} coincides with the group that is usually denoted by $\pic\Phi(\le f\le)$ in the classical literature (see, e.g., \cite[p.~22]{bm}, \cite[p.~148]{isch}).

\smallskip

If $f\colon X\to S$ is a morphism which has a section, then $f$ is schematically dominant by Lemma \ref{sor}(iii) and the preceding discussion applies to $f$. In this case \eqref{useq} is, in fact, a (non-canonically) split exact sequence of \'etale sheaves on $T$. Indeed, if $\sigma\colon S\to X$ is a section of $f$, then $\sigma_{T}\colon T\to X_{T}$ is a section of $f_{T}$ and $(f_{T})_{*}(\sigma_{T}^{\le\flat}\le)$ is a retraction of $f_{T}^{\e\flat}$ \eqref{marv2} which splits \eqref{useq}. Consequently, for every integer $r\geq 0$ and every \'etale morphism $T\to S$, \eqref{useq} induces a split exact sequence of abelian groups
\[
0\lra H^{\le r}\lbe(T_{\et},\bg_{m,\le T}\be)\overset{\! H^{r}\be(\le f_{\lbe T}^{\le\flat})}{\lra} H^{\le r}\lbe(T_{\et},(f_{T})_{*}\bg_{m,X_{T}}\be)\lra H^{\le r}\lbe(T_{\et},U_{\lbe X_{T}\be/\e T}\be)\lra 0.
\]
In particular, setting $r=0$ above, we obtain a canonical isomorphism of abelian groups
\begin{equation}\label{class}
\uxs(\le T\e)=\g(X_{T},\s O_{\be X_{T}}\lbe)^{*}/\g(T,\s O_{T}\lbe)^{*}.
\end{equation}
If $f$ does not have a section but is schematically dominant and $\pic \e T=0$ (e.g., $T=\spec k$, where $k$ is a field), then the preceding formula also holds by the exactness of \eqref{useq2}.

\begin{example}\label{exs} If $k$ is a field, $X\to\spec k$ is a $k$-scheme and $k^{\e\prime}\be/k$ is a finite subextension of $\ks\be/k$, set $X^{\prime}=X\times_{k}\spec k^{\e\prime}$ and $\g(X^{\prime}, \s O_{\be X^{\prime}}\lbe)=k^{\e\prime}[X]$. Further, write $\xs=X\times_{k}\spec\ks$ and $\ks[X]=\g(\xs, \mathcal O_{\be\xs}\lbe)$. We have $U_{X/k}(k^{\e\prime}\e)=k^{\e\prime}\le[X]^{*}\lbe/(k^{\e\prime}\e)^{*}$ by \eqref{class}. Consequently, $U_{X/k}$ is the \'etale sheaf on $k$ associated to the $\g$-module $\varinjlim_{\e k^{\e\prime}} k^{\e\prime}\le[X]^{*}\lbe/(k^{\e\prime}\le)^{*}=\ks[X]^{*}/\e (\ks)^{*}$, where $\g={\rm Gal}(\ks\be/k\le)$. See \cite[II, \S2, pp.~92-95]{t}. In particular,
\begin{equation}\label{uk}
U_{k}(X\le)=U_{X/k}(k\le)=k[X]^{*}\be/\e k^{*}.
\end{equation} 
\end{example}

\smallskip

We now return to our general discussion.

\smallskip

Let $S$ be a scheme, $\s A$ an abelian category, $\s C$ a full subcategory of $(\textrm{Sch}/S\e)$ which is stable under products and contains $1_{\lbe S}$, and $F\colon \s C\to \s A$ a contravariant functor. If $f\colon X\to S$ and $g\colon Y\to S$ are objects of $\s C$, we will write $F(\le f\le)=F(X)$ and $F(\le g\le)=F(Y)$. Let $\textrm{pr}_{1}\colon F(X)\oplus F(Y\le)\to F(X)$ and $\textrm{pr}_{2}\colon F(X)\oplus F(Y\le)\to F(Y\le)$ be the first and second projection morphisms in $\s A$. The canonical $S$-morphisms $p_{X}\colon X\times_{S}Y\to X$ and $p_{\e Y}\colon X\be\times_{S}\be Y\to Y$ induce a morphism in $\s A$
\begin{equation}\label{fxy}
\psi_{X,\e Y}=F(\e p_{\lbe X})\circ \lbe\textrm{pr}_{1}+F(\e p_{\e Y})\circ\lbe\textrm{pr}_{2} \colon F(X)\oplus F(Y)\to F(X\be\times_{S}\be Y\le)
\end{equation}
which is natural in $X\to S$ and $Y\to S$.

Now assume that $Y=G$ is a group in $\s C$, both $X\to S$ and $G\to S$ are flat and locally of finite presentation and $X\to S$ is equipped with a right action
\begin{equation}\label{act}
\varsigma\colon X\be \times_{\lbe S} G\to X
\end{equation}
of $G$ over $S$. Assume, in addition, that the map $\psi_{X,\le G}\colon F(X)\oplus F(G\le)\to F(X\be \times_{\lbe S} G\e)$ is an isomorphism and define maps $\pi_{\be X}\colon F(X\be \times_{S}\lbe G\e)\to F(X)$ and $\pi_{\lbe G}\colon F(X\be \times_{S}\lbe G\e)\to F(G\le)$ by $\pi_{\be X}=\textrm{pr}_{1}\circ \psi_{X,\le G}^{-1}$ and
$\pi_{\lbe G}=\textrm{pr}_{2}\circ \psi_{X,\le G}^{-1}$. Then the following holds
\begin{equation}\label{fpg}
F(\e p_{G})\be\circ\be \pi_{G}=1_{F(X\times_{S}\le G\le)}-F(\e p_{X})\be\circ\be \pi_{\be X}.
\end{equation}
Further, if $i_{2}\colon F(G\le)\to F(X)\oplus F(G\le)$ is given by $i_{2}(a)=(0,a)$, then the diagram
\[
\xymatrix{0\ar[r]& F(G\le)\ar[r]^(.35){i_{2}}\ar@{=}[d]& F(X)\oplus F(G\le)\ar[r]^(.6){\textrm{pr}_{1}}\ar[d]_(.45){\simeq}^(.45){\psi_{X,G}}& F(X)\ar@{=}[d]\ar[r]& 0\\
0\ar[r]& F(G\le)\ar[r]^(.4){F(\e p_{\lle G})}& F(X\be \times_{S}\be G\le)\ar[r]^(.6){\pi_{\lbe X}}&  F(X)\ar[r]& 0
}
\]
is exact and commutative. We conclude that 
\begin{equation}\label{spe}
0\to F(G\le)\overset{\!F\lbe(\le p_{\lle G}\lbe)}{\lra} F(X\be \times_{S}\be G\le)\overset{\pi_{\lbe X}}{\lra} F(X)\to 0
\end{equation}
is a split exact sequence in $\s A$.

Now let
\begin{equation}\label{vphi}
\varphi_{X,\e G}\colon F(X)\to F(G\le),
\end{equation}
be the composition
\[
F(X)\overset{\!\!\!F(\varsigma)}{\lra}F(X\be \times_{S}\be G\e)\overset{\!\pi_{G}}{\lra}F(G\le),
\]
where $\varsigma$ is the map \eqref{act}. Further, set
\begin{equation}\label{ro}
\rho=(\varsigma,p_{X}\lbe)_{\lbe S}\colon X\be \times_{S}\be G\to X\be \times_{S}\be X.
\end{equation}

\smallskip

The following lemma generalizes \cite[Lemma 6.4]{san}.

\begin{lemma}\label{add} Let $S$ be a scheme, $\s A$ an abelian category, $\s C$ a full subcategory of $(\rm{Sch}/S\e)$ which is stable under products and contains $1_{\lbe S}$ and $F\colon \s C\to \s A$ a contravariant functor. Let $X$ and $G$ be objects of $\e\s C$ which are flat and locally of finite presentation over $S$, where $G$ is a group, and assume that $X$ is a torsor under $G$ over $S$, i.e., the map \eqref{ro} is an isomorphism. Assume, furthermore, that the map $\psi_{X,\e G}$ \eqref{fxy} is an isomorphism, so that the map $\varphi_{X,\e G}$ \eqref{vphi} is defined. If 
\begin{enumerate}
\item[(i)]  $F(S\le)=F(1_{\lbe S})=0$ and
\item[(ii)] the map $\psi_{X,\e X}\colon F(X)\oplus F(X\le)\to F(X\!\times_{\be S}\! X\e)$ \eqref{fxy} is an isomorphism,
\end{enumerate}
then $\varphi_{X,\e G}\colon F(X)\to F(G\le)$ \eqref{vphi} is an isomorphism.
\end{lemma}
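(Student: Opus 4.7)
The plan is to exploit the torsor isomorphism $\rho\colon X\times_{S}G\isoto X\times_{S}X$ of \eqref{ro} to relate the two additivity isomorphisms $\psi_{X,X}$ and $\psi_{X,G}$, thereby reading off $\varphi_{X,G}$ as the only obstruction to the invertibility of an explicit $2\times 2$ ``block map''. Concretely, since $\rho$ is an isomorphism by the torsor hypothesis, so is $F(\rho)$ in $\s A$; sandwiching it between $\psi_{X,X}$ (invertible by (ii)) and $\psi_{X,G}^{-1}$ (invertible by the standing hypothesis that $\psi_{X,G}$ is iso) yields an isomorphism
\[
\Theta:=\psi_{X,G}^{-1}\circ F(\rho)\circ\psi_{X,X}\colon F(X)\oplus F(X)\isoto F(X)\oplus F(G).
\]
Since $p_{1}\circ\rho=\varsigma$ and $p_{2}\circ\rho=p_{X}$, the defining formula \eqref{fxy} gives
\[
F(\rho)\circ\psi_{X,X}(a,b)=F(\varsigma)(a)+F(p_{X})(b),
\]
so the task reduces to decomposing $F(\varsigma)(a)\in F(X\times_{S}G)$ along the direct sum $F(X)\oplus F(G)$ provided by $\psi_{X,G}^{-1}$.

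This is where hypothesis (i) enters, and this is the one conceptual step in the argument. Let $e\colon S\to G$ denote the identity section of the $S$-group $G$, let $f\colon X\to S$ be the structure morphism, and set $\sigma=(1_{X},\,e\circ f\e)_{\lbe S}\colon X\to X\times_{S}G$. Then $p_{X}\circ\sigma=1_{X}$ while $p_{G}\circ\sigma=e\circ f$ factors through $S$, so applying $F$ and using $F(S)=0$ I would deduce $F(\sigma)\circ F(p_{X})=1_{F(X)}$ and $F(\sigma)\circ F(p_{G})=0$. Since $\pi_{X}\circ F(p_{X})=1_{F(X)}$, $\pi_{X}\circ F(p_{G})=0$, $\pi_{G}\circ F(p_{X})=0$ and $\pi_{G}\circ F(p_{G})=1_{F(G)}$ by construction, and since $\psi_{X,G}$ is an isomorphism, this forces $F(\sigma)=\pi_{X}$. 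Combining with $\varsigma\circ\sigma=1_{X}$ and the contravariance of $F$ one finds $\pi_{X}\circ F(\varsigma)=F(\sigma)\circ F(\varsigma)=F(\varsigma\circ\sigma)=1_{F(X)}$, while $\pi_{G}\circ F(\varsigma)=\varphi_{X,G}$ by the very definition \eqref{vphi}. Therefore $F(\varsigma)(a)=F(p_{X})(a)+F(p_{G})(\varphi_{X,G}(a))$ and hence
\[
\Theta(a,b)=(\,a+b,\,\varphi_{X,G}(a)\,).
\]

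Finally I would conclude by inspecting the explicit form of $\Theta$. The injectivity of $\Theta$ forces the injectivity of $\varphi_{X,G}$, because any $a\in\krn\varphi_{X,G}$ gives $(a,-a)\in\krn\Theta=0$; dually, the surjectivity of $\Theta$ forces $\varphi_{X,G}$ to be surjective, because the image of $\Theta$ is visibly $F(X)\oplus\img\varphi_{X,G}$. In an abelian category these two properties make $\varphi_{X,G}$ an isomorphism, which is the desired conclusion. The only non-formal point in the whole argument is the identification $F(\sigma)=\pi_{X}$, for which the vanishing hypothesis (i) is essential; everything else is a purely diagrammatic transcription of the torsor identity $\rho=(\varsigma,p_{X})_{\lbe S}$.
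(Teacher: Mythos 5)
Your proof is correct and follows essentially the same route as the paper: both arguments hinge on using hypothesis (i) to identify $F(\varepsilon_{\be X})$ (your $F(\sigma)$) with $\pi_{\be X}$, deducing $\pi_{\be X}\circ F(\varsigma)=1_{F(X)}$ and $\pi_{G}\circ F(\varsigma)=\varphi_{X,G}$, and then transporting through the isomorphism $F(\rho)$ sandwiched between $\psi_{X,X}$ and $\psi_{X,G}$. The only difference is cosmetic: you invert the explicit matrix $\Theta(a,b)=(a+b,\varphi_{X,G}(a))$ directly (legitimate in an abelian category via generalized elements), whereas the paper packages the same computation as a morphism of split short exact sequences and invokes the five lemma.
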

\begin{proof} Let $\varepsilon\colon S\to G$ be the unit section of $G$. By functoriality, the following diagram commutes
\[
\xymatrix{
F(X)\oplus F(G\le)\ar[r]^(.53){\psi_{X,G}}\ar[d]_{(1_{F\lbe(X)},\e F(\varepsilon))}& F(X\be \times_{S}\be G\le)\ar[d]^{F(\varepsilon_{\be X}\lbe)}\\
F(X)\oplus F(S)\ar[r]^(.6){\psi_{X,S}}& F(X).
}
\]
Now, by (i), the above diagram can be identified with a commutative triangle
\[
\xymatrix{
F(X)\oplus F(G)\ar[r]^(.53){\psi_{X,G}}\ar[dr]_(.5){\textrm{pr}_{1}}& F(X\be \times_{S}\be G\le)\ar[d]^{F(\varepsilon_{\be X}\lbe)}\\
& F(X),
}
\]
i.e.,
\begin{equation}\label{pix}
F(\varepsilon_{\be X}\lbe)=\textrm{pr}_{1}\circ \psi_{X,\le G}^{-1}=\pi_{\be X}.
\end{equation}
We conclude that 
\begin{equation}\label{piv}
\pi_{\be X}\be\circ\be F(\le\varsigma)=F(\le\varsigma\be\circ\be\varepsilon_{\lbe X})=F(\le 1_{X})=1_{F(X)},
\end{equation}
where $\varsigma$ is the given right action \eqref{act}.

Now let $\delta\colon F(X)\to F(X)\oplus F(X)$ be the map $\delta(c)=(c,-c)$ and define
$\lambda=\psi_{X,\le X}\circ\e\delta\colon F(X)\to F(X\be \times_{S}\lbe X\e)$. Then there exists a canonical exact and commutative diagram 
\[
\xymatrix{0\ar[r]&F(X)\ar[r]^(.35){\delta}\ar@{=}[d]&F(X)\oplus F(X)\ar[r]^(.6){+}\ar[d]^(.48){\psi_{X,X}}_{\simeq}& F(X)\ar@{=}[d]\ar[r]&0\\
0\ar[r]&F(X)\ar[r]^(.4){\lambda}&F(X\be \times_{S}\be X\le)\ar[r]^(.6){F(\Delta)}&F(X)\ar[r]& 0,
}
\]
where $\Delta\colon X\to X\! \times_{S}\! X$ is the diagonal morphism. Since the middle vertical map in the above diagram is an isomorphism by (ii) and the top row is a split exact sequence, the bottom row is a split exact sequence as well. Now consider the diagram with split exact rows 
\begin{equation}\label{d4}
\xymatrix{0\ar[r]&F(X)\ar[r]^(.37){\lambda}\ar[d]_{\varphi_{\lbe X,\le G}}&F(X\be \times_{S}\be X\le)\ar[r]^(.6){F(\Delta)}\ar[d]^{F(\le\rho)}&F(X)\ar@{=}[d]\ar[r]&0\\
0\ar[r]&F(G)\ar[r]^(.4){F(\le p_{\le G})}&F(X\be \times_{S}\be G\le)\ar[r]^(.6){F(\varepsilon_{\be X})}&F(X)\ar[r]& 0
}
\end{equation}
where $\rho$ is the map \eqref{ro} and the bottom row is the sequence \eqref{spe} (recall that $F(\varepsilon_{\be X}\lbe)=\pi_{\be X}$ \eqref{pix}). Since $\rho$ is an isomorphism by hypothesis, the middle vertical map in the above diagram is an isomorphism as well. Further, since $\rho\e\circ\e\varepsilon_{\be X}=\Delta$, the right-hand square in \eqref{d4} commutes. Thus, to complete the proof, it suffices to check that the left-hand square in \eqref{d4} commutes. Let $p_{\le 1}$ and $p_{\le 2}$ denote (respectively) the first and second projections $X\be \times_{S}\be X\to X$. Then, by \eqref{fxy},
\[
\begin{array}{rcl}
F(\e\rho)\circ\psi_{X,X}&=& F(\e\rho)\circ F(\e p_{\le 1})\be\circ\be\textrm{pr}_{1}+F(\e\rho)\be\circ\be F(\e p_{2})\be\circ\be\textrm{pr}_{2}\\
&=& F(\le\varsigma)\be\circ\be\textrm{pr}_{1}+F(\e p_{X})\be\circ\be\textrm{pr}_{2}.
\end{array}
\]
Thus, by \eqref{fpg} and \eqref{piv},
\[
\begin{array}{rcl}
F(\e\rho)\be\circ\be\lambda&=& F(\e\rho)\be\circ\be\psi_{X,X}\be\circ\be \delta=( F(\le\varsigma)\be\circ\be\textrm{pr}_{1}+F(\e p_{X})\be\circ\be\textrm{pr}_{2})\circ\delta=F(\le\varsigma)-F(\e p_{X})
\\
&=&F(\le\varsigma)-F(\e p_{X})\be\circ\be \pi_{\be X}\be\circ\be F(\le\varsigma)=(1_{F(X\times_{S}\le G\e)}-F(\e p_{X})\be\circ\be \pi_{\be X})\be\circ\be F(\le\varsigma)\\
&=&F(\e p_{G})\be\circ\be \pi_{G}\be\circ\be F(\le\varsigma)=F(\le p_{G})\be\circ\be \varphi_{X,\le G}.
\end{array}
\]
Thus the right-hand square in diagram \eqref{d4} commutes, as required.
\end{proof}

\smallskip

An {\it \'etale quasi-section} of a morphism $f\colon X\to S$ is an \'etale and surjective morphism $\alpha\colon T\to S$ such that there exists an $S$-morphism $h\colon T\to X$, i.e., $f\circ h=\alpha$. In this case, the induced morphism $(h,1_{T})_{S}\colon T\to X_{T}$ is a section of $f_{T}\colon X_{T}\to T$. By \cite[${\rm IV}_{4}$, Corollary 17.16.3(ii)]{ega}, every smooth and surjective morphism $X\to S$ has an \'etale quasi-section.

\begin{definition}\label{eti} Let $f\colon X\to S$ be a morphism of schemes which has a finite \'etale quasi-section of constant degree. Then the {\it \'etale index} $I(\le f\le)$ of $f$ is the greatest common divisor of the degrees of all finite \'etale quasi-sections of $f$ of constant degree.
\end{definition}

If $f\colon X\to S$ has a finite \'etale quasi-section $T\to S$ of constant degree $d$ (so that $I(\le f\le)$ is defined) and $Y\to S$ is any $S$-scheme, then $Y_{T}\to Y$ is a finite \'etale quasi-section of $f_{Y}$ of constant degree $d$. It follows that $I(\le f_{Y}\le)$ is also defined and divides $I(\le f\le)$. In particular, for every maximal point $\eta$ of $S$, the \'etale (i.e., separable) index of $f_{\eta}\colon X_{\eta}\to\eta$ divides $I(\le f\le)$, i.e., $I(\le f_{\eta}\le)\!\mid\! I(\le f\le)$. Note also that, if $g\colon Y\to S$ is another morphism of schemes and $\alpha\colon T\to S$ is an \'etale quasi-section of $f\!\times_{\be S}\be g\colon X\!\times_{\be S}\! Y\to S$ of constant degree $d$ with associated $S$-morphism $h\colon T\to X\!\times_{\be S}\be Y$, then $\alpha$ is also an \'etale quasi-section of $f$ and $g$ with associated $S$-morphisms $p_{X}\be\circ\be h\colon T\to X$ and $p_{\e Y}\be\circ\be h\colon T\to Y$, respectively. It follows that, if $I\lbe (\le f\be\times_{\be S}\be g\le)$ is defined, then $I\le(\le f\le)$ and $I\le(\e g\le)$ are both defined and they divide $I\lbe (\le f\!\times_{\be S}\be g\le)$. Consequently
\begin{equation} \label{genf}
I\lbe (\le f\!\times_{\be S}\be g\le)=1\implies I(\le f_{\eta}\le)=I(\le g_{\eta}\le)=1
\end{equation}
for every maximal point $\eta$ of $S$.

Note also that the degree of a finite \'etale quasi-section $\alpha\colon T\to S$ of $f\colon X\to S$ is constant over each connected component of $S$. Thus, if $S$ is connected, then the degree of $\alpha$ is constant. Henceforth, the statement {\it $f$ has a finite \'etale quasi-section of constant degree} will be abbreviated to {\it the \'etale index of $f$ is defined} (or, simply, to {\it $I(\le f\le)$ is defined\,}).
Note that the statement {\it $I(\le f\le)$ is defined and is equal to $1$}, which appears often below, clearly holds if $f$ has a section.

\begin{remark}\label{gff} The requirement that $f\colon X\to S$ have a finite \'etale quasi-section of constant degree holds in one important case of interest: let $S$ be a non-empty open affine subscheme of a proper, smooth and geometrically connected curve over a finite field. If $f\colon X\to S$ is a smooth and surjective morphism with geometrically irreducible generic fiber, then $f$ has a finite \'etale quasi-section of constant degree (since $S$ is connected), i.e., $I(\le f\le)$ is defined. See \cite[Theorem (0.1)]{tam}.
\end{remark}

\section{The \'etale cohomology of the multiplicative group}\label{3}

In this Section we relate the \'etale cohomology of $\bg_{m,\e X\be\times_{\be S}\lbe Y}$ to the \'etale cohomology of $\bg_{m,\e X}$ and $\bg_{m,\e  Y}$ for arbitrary $S$-schemes $X$ and $Y$.

\smallskip

Let $f\colon X\to S$ be a morphism of schemes. Recall that the (\'etale) {\it relative Picard functor of $X$ over $S$} is the \'etale sheaf $\picxs$ on $S$ associated to the presheaf $(\textrm{Sch}/S)\to \mathbf{Ab}, (T\to S\le)\mapsto\pic X_{T}$.
We have
\[
\picxs=R_{\et}^{\le 1}\e f_{\lbe *}\bg_{m,X}.
\]
See \cite[\S2]{klei} and/or \cite[\S8.1]{blr} for basic information on $\picxs$.
We will write $\brp X=H^{2}(X_{\et},\bg_{m,\le X}\lbe)$ for the full cohomological Brauer group of $X\e$ (its torsion subgroup $H^{2}(X_{\et},\bg_{m,\le X}\lbe)_{\rm tors}$ is often also called the cohomological Brauer group of $X$, but the latter group will play no role in this paper). The Cartan-Leray spectral sequence associated to $f$
\begin{equation}\label{leray}
H^{\le r}\be(S_{\et}, R^{\e s}\! f_{\lbe *}\bg_{m,X})\Rightarrow H^{\le r+s}(X_{\et}, \bg_{m,X})
\end{equation}
furnishes edge morphisms $e_{\be f}^{r}\colon H^{\le r}(S_{\et},f_{\lbe *}\bg_{m,X})\to H^{\le r}(X_{\et},\bg_{m,X})$ for every $r\geq 0$. On the other hand, there exists a canonical pullback map
\begin{equation}\label{rr}
f^{\le(r)}\colon H^{r}(S_{\et},\bg_{m,S})\to H^{r}(X_{\et},\bg_{m,X})
\end{equation}
defined as the composition
\begin{equation}\label{comp}
H^{r}(S_{\et},\bg_{m,S})\overset{H^{r}\be(\le f^{\le\flat})}{\lra}H^{r}(S_{\et},f_{*}\bg_{m,X})
\overset{e_{\be f}^{r}}{\lra}H^{r}(X_{\et},\bg_{m,X}),
\end{equation}
where $f^{\le\flat}$ is the map \eqref{fb}. Note that
\begin{equation}\label{fo}
f^{(0)}=f^{\e\flat}\be(S).
\end{equation}
Further, by Lemma \ref{ker-cok} applied to the pair of maps \eqref{comp}, for every integer $r\geq 0$ there exists a canonical exact sequence of abelian groups
\begin{equation}\label{mag}
0\to\krn H^{r}\be(\le f^{\le\flat})\to\krn f^{\le(r)}\to \krn\e e_{\be f}^{r}.
\end{equation}
If $Y\overset{\!g}{\to} X\overset{\!f}{\to}S$ are morphisms of schemes, then it is not difficult to check that
\[
e_{\be f\circ g}^{r}\be\circ\be H^{r}\be(\le f_{*}\be(\e g^{\flat}))=e_{\be g}^{r}\circ  H^{r}\be(\e g^{\flat})\circ e_{\be f}^{r}.
\]
Now \eqref{marv} yields the identity
\begin{equation}\label{need}
(\e f\be\circ\be g\le)^{(r)}=g^{(r)}\!\circ\! f^{(r)}.
\end{equation}
When $r=1$ (respectively, $r=2$), \eqref{rr} will be identified with (respectively, denoted by) the canonical map $\pic\be f\colon \pic S\to \pic X$ (respectively, $\brp f\colon \brp S\to \brp X\le$). The {\it relative cohomological Brauer group of $X$ over $S$} is defined by
\begin{equation}\label{relb}
\bxs=\krn\!\left[\e \brp f\colon \brp S\to \brp X\right].
\end{equation}
More generally, for every integer $r\geq 0$ we define a contravariant functor
\begin{equation}\label{kr}
\krn^{\lbe(r)}_{\be S}\colon ({\rm{Sch}}/S\e)\to \mathbf{Ab}
\end{equation} 
by setting
\[
\krn^{\lbe(r)}_{\be S}\lbe(\le X\overset{\!f}{\to}S\le)=\krn\!\left[\e f^{\le(r)}\colon H^{r}\be(S_{\et},\bg_{m,S})\to H^{r}\be(X_{\et},\bg_{m,X})\right].
\]
Thus, for $r=1$ and $r=2$, we obtain functors
\begin{equation}\label{kpic}
\krn\pic_{\! S}\colon ({\rm{Sch}}/S\e)\to \mathbf{Ab},\, (\le X\overset{\!f}{\to}S\le)\mapsto \krn\e\pic \be f,
\end{equation}
and
\begin{equation}\label{kbr}
\brp(-/S)\colon ({\rm{Sch}}/S\e)\to \mathbf{Ab},\, (\le X\overset{\!f}{\to}S\le)\mapsto \bxs.
\end{equation}
We will also consider the functor
\begin{equation}\label{ckr}
\cok^{\!(r)}_{\!\be S}\colon ({\rm{Sch}}/S\e)\to \mathbf{Ab}
\end{equation} 
defined by
\[
\cok^{\!(r)}_{\!\be S}\be(\le X\overset{\!f}{\to}S\le)=\cok\!\be\left[\e f^{\le(r)}\colon H^{r}\be(S_{\et},\bg_{m,S})\to H^{r}\be(X_{\et},\bg_{m,X})\right].
\]
We will use the standard notation for $\cok^{\!(1)}_{\!\be S}\be(\le X\overset{\!f}{\to}S\le)$, which is 
\begin{equation}\label{npic}
\npic\be(X\!/\be S\le)=\cok\pic\be f.
\end{equation}
See, e.g., \cite{w} and \cite[Definition 3.15]{isch}. Thus, \eqref{ckr} for $r=1$ is a functor
\begin{equation}\label{npicf}
\npic\be(-/\be S)\colon ({\rm{Sch}}/S\e)\to \mathbf{Ab},\, (\le X\to S\le)\mapsto \npic\be(X\be/\be S\le).
\end{equation}

Now, by \cite[p.~309, line 8]{mi1}, the Cartan-Leray spectral sequence \eqref{leray} induces an exact sequence of abelian groups
\begin{equation}\label{leray0}
0\to  H^{1}(S_{\et},f_{\lbe *}\bg_{m,X})\overset{\!e_{\be f}^{1}}{\to}\pic X\to\picxs(S\le)\to
H^{2}(S_{\et},f_{\lbe *}\bg_{m,X})\to \br X.
\end{equation}
Thus $e_{\be f}^{1}$ induces an isomorphism of abelian groups
\begin{equation}\label{sbgp}
H^{1}(S_{\et},f_{\lbe *}\bg_{m,X})\simeq\krn[\e\pic X\to\picxs(S\le)]
\end{equation}
which identifies $H^{1}(S_{\et},f_{\lbe *}\bg_{m,X})$ with a subgroup of $\pic X$. Further, by \eqref{mag} and the injectivity of $e_{\be f}^{1}$, there exists a canonical isomorphism of abelian groups
\begin{equation}\label{mag2}
\krn H^{1}\be(\le f^{\le\flat})=\krn \pic f.
\end{equation}

\begin{remarks}\label{sect} \indent
\begin{enumerate}
\item[(a)] Let $f\colon X\to S$ be a morphism of schemes, $\alpha\colon T\to S$ a finite, \'etale and surjective morphism of constant degree $d$ and $r\geq 0$ an integer. By \eqref{inv2} and \cite[IX, \S5.1]{sga4}, $\alpha$ induces restriction and corestriction (or trace) homomorphisms $\res\colon H^{r}\be(S_{\et},\bg_{m,S})\to H^{r}\be(T_{\et},\bg_{m,T})$ and $\tr\colon H^{r}\be(T_{\et},\bg_{m,T})\to H^{r}\be(S_{\et},\bg_{m,S})$ such that $\tr\circ\res\colon H^{r}\be(S_{\et},\bg_{m,S})\to H^{r}\be(S_{\et},\bg_{m,S})$
is the multiplication by $d$ map. Consider the following commutative diagram of abelian groups with exact rows
\begin{equation}\label{big}
\xymatrix{0\ar[r]& \krn f^{\le(r)}\ar[d]^{\res}\ar[r]& H^{r}\be(S_{\et},\bg_{m,S})\ar[d]^{\res}\ar[r]^(.45){f^{\le(r)}}& H^{r}\be(X_{\et},\bg_{m,X})\ar[d]^{\res}\ar[r]& \cok f^{\le(r)}\ar[d]^{\res}\ar[r]& 0\\
0\ar[r]&\krn f_{T}^{(r)}\ar[d]^{\tr}\ar[r]& H^{r}\be(T_{\et},\bg_{m,T})\ar[d]^{\tr}\ar[r]^(.45){f_{\lbe T}^{(r)}}& H^{r}\be((X_{T})_{\et},\bg_{m,X_{T}})\ar[d]^{\tr}\ar[r]&\cok f_{T}^{(r)}\ar[d]^{\tr}\ar[r]& 0\\
0\ar[r]& \krn f^{\le(r)}\ar[r]& H^{r}\be(S_{\et},\bg_{m,S})\ar[r]^(.45){f^{\le(r)}}& H^{r}\be(X_{\et},\bg_{m,X})\ar[r]&\cok f^{\le(r)}\ar[r]& 0,
}
\end{equation}
where the maps appearing on the outer columns are induced by those appearing on the inner columns using the commutativity of the top and bottom inner squares. As noted above, the second vertical composition is the multiplication by $d$ map. Now, since $X_{T}\to X$ is a finite, \'etale and surjective morphism of constant degree $d$, the third vertical composition is also the multiplication by $d$ map. We conclude that the compositions $\krn f^{\le(r)}\overset{\!\res}{\to}\krn f_{T}^{(r)}\overset{\!\tr}{\to} \krn f^{\le(r)}$ and
$\cok f^{\le(r)}\overset{\!\res}{\to}\cok f_{T}^{(r)}\overset{\!\tr}{\to} \cok f^{\le(r)}$ are the multiplication by $d$ map.
		
\item[(b)] If $F\colon ({\rm{Sch}}/S\e)\to \mathbf{Ab}$ denotes any of the functors $\cok^{\be(r)}_{\!\be S}\!$ \eqref{ckr} (in particular, $\npic\be(-/\be S\le)$ \eqref{npicf}), then $F(S\le)=F(1_{\lbe S})=0$, i.e., $F$ satisfies condition (i) of Lemma \ref{add}. This also holds for $F=\krn^{\lbe(r)}_{\!\lbe S}\!$ \eqref{kr} (in particular, for $\krn\pic_{\! S}$ \eqref{kpic} and $\br(-/\be S\le)$ \eqref{kbr}). In fact, $\krn^{\be(r)}_{\! S}\lbe(\le f\le)=0$ for every morphism $f\colon X\to S$ that has a section, as the following (well-known) remark shows. 
		
\item[(c)] If $f\colon X\to S$ has a section $\sigma\colon S\to X$, then the induced homomorphism $\sigma^{(r)}\colon H^{r}\be(X_{\et},\bg_{m,X})\to H^{r}\be(S_{\et},\bg_{m,S})$ is a retraction of $f^{\le(r)}\colon H^{r}\be(S_{\et},\bg_{m,S})\to H^{r}\be(X_{\et},\bg_{m,X})$. Consequently, $\krn^{\be(r)}_{\lbe S}\lbe(\le f\le)=\krn f^{\le(r)}=0$ for every $r\geq 0$.
		
\item[(d)] Let $f\colon X\to S$ be a morphism which has a finite \'etale quasi-section $T\to S$ of constant degree $d$, so that the \'etale index $I(\le f\le)$ of $f$ is defined. Since $f_{T}\colon X_{T}\to T$ has a section, (c) shows that $\krn f_{T}^{(r)}=0$ for every integer $r\geq 0$. It now follows from diagram \eqref{big} that 
$d\lbe\cdot\lbe\krn\le f^{\le(r)}=0$. Consequently, $\krn\le f^{\le(r)}$ is annihilated by $I(\le f\le)$ for every integer $r\geq 0$. In particular, $\krn\le \pic\be f$ and $\bxs$ \eqref{relb} are  annihilated by $I(\le f\le)$.
\end{enumerate}
\end{remarks}

Let
\begin{equation}\label{nb}
\nabla\colon\bg_{m,\le S}\to \bg_{m,\le S}\le\oplus\le \bg_{m,\le S}.
\end{equation}
be the morphism of \'etale sheaves on $S$ defined by $\nabla(T)(c)=(c\le,c^{-1})$, where $T$ is an \'etale $S$-scheme and $c\in \bg_{m,\le S}(\le T\le)$. 
For every $r\geq 0$, \eqref{nb} induces a homomorphism of abelian groups
\begin{equation}\label{delr}
\nabla^{r}=H^{r}\be(\nabla)\colon H^{r}\be(S_{\et},\bg_{m,S})\to H^{r}\be(S_{\et},\bg_{m,S})\oplus H^{r}\be(S_{\et},\bg_{m,S}),\e \xi\mapsto(\xi,\xi^{-1}\le).
\end{equation}

Let $f\colon X\to S$ and $g\colon Y\to S$ be morphisms of schemes. For every integer $r\geq 0$, the canonical projections $p_{\lbe X}\colon X\!\times_{\be S}\! Y\!\to\! X$ and $p_{\e Y}\colon X\!\times_{\be S}\! Y\!\to\! Y$ induce homomorphisms $p_{\be X}^{(r)}$ and $p_{\le Y}^{(r)}$ \eqref{rr}. Define a homomorphism
\begin{equation}\label{pxy}
p_{XY}^{\le r}\colon H^{r}\be(X_{\et},\bg_{m,X}\be)\lbe\oplus\lbe H^{r}\be(\le Y_{\et},\bg_{m,Y}\be)\to H^{r}\!\left((X\!\times_{\be S}\! Y\le)_{\et},\bg_{m,\e X\lbe\times_{\be S}\lbe Y}\lbe\right)
\end{equation}
by setting $p_{XY}^{\le r}(c,d)= p_{\be X}^{(r)}\be(c)\cdot p_{\le Y}^{(r)}\be(d\e)$, where $c\in H^{r}\be(X_{\et},\bg_{m,X}\be)$ and $d\in H^{r}\be(Y_{\et},\bg_{m,Y}\be)$. In particular, we obtain canonical maps
\begin{equation}\label{pxy0}
p_{\lbe XY}^{\le 0}\colon \bg_{m,\le S}\le(X\le)\oplus \bg_{m,\le S}\le(\le Y\le)\to\bg_{m,\le S}\le(X\!\times_{\lbe S}\! Y\le)
\end{equation}
and
\begin{equation}\label{pxy1}
p_{\lbe XY}^{\le 1}\colon \pic X\be\oplus\be\pic Y\to \pic\be(X\!\times_{\lbe S}\! Y\e).
\end{equation}
If $S$ is the spectrum of a field, the map \eqref{pxy0} (respectively, \eqref{pxy1}) was discussed previously by Jaffe \cite{jaf} and Conrad \cite{con} (respectively, Ischebeck \cite{isch} and Sansuc \cite{san}).

Now, since
\begin{equation}\label{fgp}
f\be\circ\be p_{\lbe X}=g\be\circ\be p_{\e Y}=f\be\times_{\be S}\be g,
\end{equation}
\eqref{need} shows that the following diagram commutes: 
\begin{equation}\label{+di}
\xymatrix{H^{r}\be(S_{\et},\bg_{m,S})\lbe\oplus\lbe H^{r}\be(S_{\et},\bg_{m,S})\,\ar@{->>}[d]^(.45){(\e\cdot\e)}\ar[drr]^{\theta^{\le r}_{\be XY}}\ar[rr]^(.48){\left( f^{(\lbe r\lbe)}\lbe,\, g^{(\lbe r\lbe)}\right)}&& \,H^{r}\be(X_{\et},\bg_{m,X}\be)\lbe\oplus\lbe H^{r}\be(\le Y_{\et},\bg_{m,Y}\be)\ar[d]^{\, p_{XY}^{\le r}}\\
H^{r}\be(S_{\et},\bg_{m,S})\ar[rr]_(.48){(\le f\lbe\times_{\be S}\le g\le)^{(r)}}&& H^{r}\be((X\!\times_{\be S}\! Y\le)_{\et},\bg_{m,\e X\lbe\times_{\be S}\lbe Y}\lbe),
}
\end{equation}
where the left-hand vertical map is the (surjective) homomorphism $(a,b)\mapsto ab$, the right-hand vertical map is the map \eqref{pxy} and the diagonal map above
\begin{equation}\label{txy}
\theta^{\le r}_{\be XY}\colon H^{r}\be(S_{\et},\bg_{m,S})\lbe\oplus\lbe H^{r}\be(S_{\et},\bg_{m,S}) \to H^{r}\be((X\!\times_{\be S}\! Y\le)_{\et},\bg_{m,\e X\lbe\times_{\be S}\lbe Y}\lbe)
\end{equation}
is the common composition $p_{XY}^{\le r}\be\circ\be \left( f^{(\lbe r\lbe)}\!,\le g^{(\lbe r\lbe)}\right)=(\le f\!\times_{\be S}\be g\le)^{(r)}\be\circ\be (\e\cdot\e)$.
The kernel of the map $(\e\cdot\e)$ in the above diagram can be identified with $H^{r}\be(S_{\et},\bg_{m,S})$ via the map $\nabla^{r}\!$ \eqref{delr}. Consequently, the left-hand vertical map in \eqref{+di} induces a map (also denoted $(\e\cdot\e)$)
\begin{equation}\label{a1}
(\e\cdot\e)\colon \krn\le f^{\le(r)}\be\oplus\be \krn\le g^{(r)}\to \krn\lbe (\le f\!\times_{\be S}\be g\le)^{(r)}
\end{equation}
so that the sequence
\begin{equation}\label{kks}
\begin{array}{rcl}
0\to\krn\lbe f^{\le(r)}\e\cap\, \krn g^{(r)}&\to&\krn f^{\le(r)}\e\oplus\e\e \krn g^{(r)}\overset{\!(\e\cdot\e)}{\lra}\krn\lbe (\le f\!\times_{\be S}\be g\le)^{(r)}\\\\
&\to& \krn\lbe(\le f\!\times_{\be S}\be g\le)^{(r)}\!\big/\!\be\left(\krn\lbe f^{\le(r)}\!\cdot\be\krn\lbe g^{(r)}\right)\to 0
\end{array}
\end{equation}
is exact, where the first nontrivial map is induced by \eqref{delr}. In particular, there exist canonical multiplication maps
\begin{equation}\label{a2}
(\e\cdot\e)\colon \krn\le \pic\be f\be\oplus\be \krn\le \pic \lbe g\to \krn\lbe \pic\be(\le f\!\times_{\be S}\be g\le)
\end{equation}
and
\begin{equation}\label{a3}
(\e\cdot\e)\colon \bxs\be\oplus\be \bys\to \brp(X\!\times_{\be S}\! Y/S\le).
\end{equation}

Note also that, by the commutativity of diagram \eqref{+di}, the right-hand vertical map in \eqref{+di} induces a map
\begin{equation}\label{pxyb}
\overline{p}_{XY}^{\e r}\colon \cok\be f^{\le(r)}\be\oplus\be\cok\be g^{(r)}\to \cok\be (\le f\!\times_{\be S}\be g\le)^{(r)}.
\end{equation}
In particular, there exists a canonical map
\begin{equation}\label{pxy1b}
\overline{p}_{XY}^{\e 1}\colon \npic\be(X\be/\be S\le)\oplus\npic\be(Y\be/\be S\le)\to\npic\be(X\!\times_{\be S}\be Y\be/\be S\le),
\end{equation}
where $\npic\be(-/\be S\le)$ is the functor \eqref{npicf}.

\begin{proposition}\label{kad} Let $f\colon X\to S$ and $g\colon Y\to S$ be morphisms of schemes such that $I\lbe(\le f\!\times_{\be S}\! g\le)$ is defined and is equal to $1$. Then, for every integer $r\geq 0$, the canonical multiplication map \eqref{a1} 
\begin{equation}\label{ned}
(\e\cdot\e)\colon \krn\le f^{\le(r)}\be\oplus\be \krn\le g^{(r)}\to \krn\lbe (\le f\!\times_{\be S}\be g\le)^{(r)}
\end{equation}
is an isomorphism, i.e.,
\[
\krn\lbe f^{\le(r)}\be\cap\be \krn g^{(r)}=\krn\lbe(\le f\!\times_{\be S}\be g\le)^{(r)}\!\big/\!\be\left(\krn\lbe f^{\le(r)}\!\cdot\be\krn\lbe g^{(r)}\right)=0
\]
(see \eqref{kks}). In particular, there exist canonical isomorphisms
\[
\krn \pic\be(\le f\!\times_{\be S}\be g\le)\simeq \krn\le \pic\be f\be\oplus\be \krn\le \pic \lbe g
\]
and
\[
\brp(X\!\times_{\be S}\! Y/S\le)\simeq \bxs\be\oplus\be \bys.
\]
\end{proposition}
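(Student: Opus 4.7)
The plan is to reduce the proposition to a one-line consequence of the restriction-corestriction machinery already packaged in Remark \ref{sect}(d). The key observation, made in the discussion preceding \eqref{genf}, is that a finite \'etale quasi-section $\alpha\colon T\to S$ of $f\!\times_{\be S}\be g$ of constant degree $d$ with associated $S$-morphism $h\colon T\to X\!\times_{\be S}\be Y$ automatically provides, via $p_{\lbe X}\!\circ\be h\colon T\to X$ and $p_{\lbe Y}\!\circ\be h\colon T\to Y$, finite \'etale quasi-sections of $f$ and $g$ of the same constant degree $d$. Since by hypothesis $I(\le f\!\times_{\be S}\be g\le)$ is defined, we therefore deduce that $I(\le f\le)$ and $I(\e g\le)$ are defined and divide $I(\le f\!\times_{\be S}\be g\le)=1$, whence $I(\le f\le)=I(\e g\le)=1$ as well.

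Next, I would apply Remark \ref{sect}(d) to each of the three morphisms $f$, $g$, and $f\!\times_{\be S}\be g$. For any morphism $h\colon Z\to S$ whose \'etale index is defined, that remark (built from the commutative diagram \eqref{big}, where the composition $\tr\circ\res$ equals multiplication by the degree of any chosen quasi-section) shows that $\krn h^{\le(r)}$ is annihilated by $I(\le h\le)$ for every $r\geq 0$. Applied with $I(\le h\le)=1$ in each of our three cases, this yields $\krn f^{\le(r)}=\krn g^{(r)}=\krn(\le f\!\times_{\be S}\be g\le)^{(r)}=0$ for every $r\geq 0$.

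With all three kernels trivial, the desired isomorphism is immediate from the exact sequence \eqref{kks}: the leftmost term $\krn f^{\le(r)}\cap\krn g^{(r)}$ and the rightmost quotient $\krn(\le f\!\times_{\be S}\be g\le)^{(r)}/(\krn f^{\le(r)}\cdot\krn g^{(r)})$ are both $0$, so the middle arrow $(\cdot)$ is an isomorphism (of zero groups, in fact). The two ``in particular'' assertions for $\krn\pic\be(\le f\!\times_{\be S}\be g\le)$ and $\brp(X\!\times_{\be S}\!Y/S\le)$ fall out by specializing to $r=1$ and $r=2$ respectively, using the identifications \eqref{kpic} and \eqref{kbr}. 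There is no substantive obstacle here: the entire content of the proposition has been absorbed into the preparatory restriction-corestriction lemma, and the only work left is the observation at the start of the argument that an \'etale quasi-section of $f\!\times_{\be S}\be g$ descends to \'etale quasi-sections of $f$ and of $g$.
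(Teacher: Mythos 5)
Your proof is correct. It runs on the same restriction--corestriction engine as the paper's proof (Remark \ref{sect} and diagram \eqref{big}), but you apply it differently: the paper restricts the comparison map $(\,\cdot\,)\colon \krn f^{\le(r)}\oplus\krn g^{(r)}\to\krn(f\times_{S}g)^{(r)}$ itself along a quasi-section $T\to S$ of $f\times_{S}g$, notes that all three groups vanish over $T$, and concludes that the kernel and cokernel of $(\,\cdot\,)$ are killed by the degree $d$, hence by $I(f\times_{S}g)=1$. You instead use the observation (already recorded in the paper just before \eqref{genf}) that any finite \'etale quasi-section of $f\times_{S}g$ of constant degree is simultaneously one of $f$ and of $g$ of the same degree, so $I(f)=I(g)=1$, and then invoke Remark \ref{sect}(d) three times to get $\krn f^{\le(r)}=\krn g^{(r)}=\krn(f\times_{S}g)^{(r)}=0$ outright. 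That is a strictly stronger conclusion than the one the paper extracts: under the hypothesis $I(f\times_{S}g)=1$, the displayed isomorphisms (including the Picard and Brauer ones) are isomorphisms of zero groups. Both arguments are sound and both ultimately rest on the same transfer computation; yours is more economical, since the whole proposition collapses into the preparatory remark, while the paper's phrasing preserves the ``additivity'' form of the statement that is quoted later (e.g.\ in Corollary \ref{cop} and Proposition \ref{mad}) and runs parallel to the genuinely non-degenerate additivity results elsewhere in the paper.
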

\begin{proof} If $T\to S$ is a finite, \'etale and surjective morphism of constant degree $d$, then, by Remark \ref{sect}(a), the vertical compositions in the following diagram are the multiplication by $d$ maps:
\[
\xymatrix{\krn\le f^{\le(r)}\be\oplus\be \krn\le g^{(r)}\ar[r]\ar[d]^(.45){\!(\res,\res)}& \krn\lbe (\le f\!\times_{\be S}\be g\le)^{(r)}\ar[d]^(.45){\res}\\
\krn\le f_{T}^{\le(r)}\be\oplus\be \krn\le g_{T}^{(r)}\ar[r]\ar[d]^{(\tr,\tr)}&\krn\lbe (\le f_{T}\!\times_{\be T}\be g_{T}\le)^{(r)}\ar[d]^{\tr}\\
\krn\le f^{\le(r)}\be\oplus\be \krn\le g^{(r)}\ar[r]& \krn\lbe (\le f\!\times_{\be S}\be g\le)^{(r)}
}
\]
Now, if $T\to S$ is a quasi-section of $f\times_{S}g$, then $f_{T}\lbe\times_{\be T}\lbe g_{\le T}$ has a section and therefore $\krn (\le f_{T}\!\times_{\be T}\lbe g_{\le T})^{\le(r)}=\krn\lbe f_{T}^{\le(r)}=\krn\lbe g_{\le T}^{(r)}=0$ by Remark \ref{sect}(c). It follows from the above diagram that the kernel and cokernel of \eqref{ned} are annihilated by $d$. Since $I\lbe(\le f\!\times_{\be S}\! g\le)=1$, the proposition follows.
\end{proof}

Next we apply Lemma \ref{ker-cok} to the pair of maps
\[
H^{\le r}\be(S_{\et},\bg_{m,S})\lbe\oplus\lbe H^{r}\be(S_{\et},\bg_{m,S})\overset{\!(\e\cdot\e)}{\twoheadrightarrow}H^{r}\be(S_{\et},\bg_{m,S})\overset{(\le f\lbe\times_{\be S}\le g\le)^{(r)}}{\lra}H^{r}\be((X\!\times_{\be S}\! Y\le)_{\et},\bg_{m,\e X\lbe\times_{\be S}\lbe Y}\lbe),
\]
whose composition is $\theta^{\e r}_{\be XY}$ \eqref{txy}. We obtain a canonical isomorphism
\begin{equation}\label{iso1}
\cok\e\theta^{\e r}_{\be XY}\overset{\!\sim}{\to}\cok\lbe (\le f\!\times_{\be S}\be g\le)^{(r)}
\end{equation}
and a canonical exact sequence
\begin{equation}\label{sec1}
0\to H^{\le r}\be(S_{\et},\bg_{m,S})\overset{\!\nabla^{r}}{\lra}\krn\e \theta^{\e r}_{\be XY}\overset{\!(\e\cdot\e)}{\lra} \krn\lbe (\le f\!\times_{\be S}\be g\le)^{(r)}\to 0,
\end{equation}
where the first map is (induced by) the map \eqref{delr}.

Now we apply Lemma \ref{ker-cok} to the pair of maps
\[
\begin{array}{rcl}
H^{r}\be(S_{\et},\bg_{m,S})\oplus H^{r}\be(S_{\et},\bg_{m,S})&\overset{\!\left(f^{(\lbe r\lbe)}\!,\e g^{(\lbe r\lbe)}\lbe\right)}{\lra}& H^{r}\be(X_{\et},\bg_{m,X}\be)\oplus H^{r}\be(\le Y_{\et},\bg_{m,Y}\be)\\
&\overset{p_{\be XY}^{\le r}}{\lra}& H^{r}\be((X\!\times_{\be S}\lbe Y\le)_{\et},\bg_{m,\e X\lbe\times_{\be S}\lbe Y}\lbe),
\end{array}
\]
whose composition is also $\theta^{\e r}_{\be XY}$ \eqref{txy}, and obtain an exact sequence
\begin{equation}
\begin{array}{rcl}\label{6}
0&\to&\krn\le f^{\le(r)}\be\oplus\be \krn\le g^{(r)}\to \krn\e \theta^{\e r}_{\be XY}\to \krn\e p_{\be XY}^{\le r}\\
&\to&\cok\be f^{\le(r)}\be\oplus\be\cok\be g^{(r)}\to\cok \theta^{\e r}_{\be XY}\to\cok p_{\be XY}^{\le r}\to 0.
\end{array}
\end{equation}
Now observe that the map $\overline{p}_{\lbe XY}^{\e r}$ \eqref{pxyb} factors as
\[
\cok\be f^{\le(r)}\be\oplus\be\cok\be g^{(r)}\to\cok \theta^{\e r}_{\be XY}
\overset{\!\sim}{\to}\cok\lbe (\le f\!\times_{\be S}\be g\le)^{(r)},
\]
where the first (respectively, second) map is that appearing in \eqref{6} (respectively,  \eqref{iso1}). Thus we obtain from \eqref{6} a canonical exact sequence
\begin{equation}\label{imp}
0\to\krn\le f^{\le(r)}\be\oplus\be \krn\le g^{(r)}\to \krn\e \theta^{\e r}_{\be XY}\to \krn \e p_{\be XY}^{\le r}\to\krn\e\overline{p}_{\lbe XY}^{\e r}\to 0
\end{equation}
and a  canonical isomorphism
\[
\cok p_{\be XY}^{\le r}\simeq \cok \overline{p}_{\lbe XY}^{\e r}.
\]
Thus the following holds

\begin{proposition}\label{eqcor} Let $f\colon X\to S$ and $g\colon Y\to S$ be morphisms of schemes and let $r\geq 0$ be an integer. Then the cokernels of the maps 
\[
p_{\lbe XY}^{\le r}\colon H^{r}\be(X_{\et},\bg_{m,X}\be)\lbe\oplus\lbe H^{r}\be(\le Y_{\et},\bg_{m,Y}\be)\to H^{r}\!\left((X\!\times_{\be S}\! Y\le)_{\et},\bg_{m,\e X\lbe\times_{\be S}\lbe Y}\lbe\right)
\]
\eqref{pxy} and
\[
\overline{p}_{\lbe XY}^{\e r}\colon \cok\be f^{\le(r)}\be\oplus\be\cok\be g^{(r)}\to \cok\be (\le f\!\times_{\be S}\be g\le)^{(r)}
\]
\eqref{pxyb} are canonically isomorphic. In particular, the cokernels of the maps 
\[
\pic X\be\oplus\be\pic Y\!\to\!\pic\be(X\!\times_{\be S}\! Y\e)
\]
\eqref{pxy1} and
\[
\npic\be(X\be/\be S\le)\oplus\npic\be(Y\be/\be S\le)\!\to\!\npic\be(X\!\times_{\be S}\! Y\be/\be S\le)
\]
\eqref{pxy1b} are canonically isomorphic.\qed
\end{proposition}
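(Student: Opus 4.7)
The proof rests on the two factorizations of the diagonal map $\theta^{\le r}_{\be XY}$ \eqref{txy} exhibited in diagram \eqref{+di}, namely
\[
\theta^{\le r}_{\be XY}=p_{XY}^{\le r}\circ\bigl(f^{(r)},\, g^{(r)}\bigr)=(\le f\!\times_{\be S}\be g\le)^{(r)}\!\circ(\e\cdot\e).
\]
The plan is to apply Lemma \ref{ker-cok} to each of these compositions and then compare the results. First, applying the lemma to the second factorization, together with the observation that the multiplication map $(\e\cdot\e)\colon H^{r}\be(S_{\et},\bg_{m,S})^{\oplus 2}\twoheadrightarrow H^{r}\be(S_{\et},\bg_{m,S})$ is surjective (with kernel identified via $\nabla^{r}\!$ \eqref{delr}), yields the canonical isomorphism $\cok\e\theta^{\e r}_{\be XY}\overset{\!\sim}{\to}\cok\lbe(\le f\!\times_{\be S}\be g\le)^{(r)}$ recorded in \eqref{iso1}.

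Next, applying Lemma \ref{ker-cok} to the first factorization produces the six-term exact sequence \eqref{6}; its last three nontrivial terms give exactly the exact sequence
\[
\cok\be f^{\le(r)}\be\oplus\be\cok\be g^{(r)}\to\cok\e\theta^{\e r}_{\be XY}\to\cok\e p_{\be XY}^{\le r}\to 0.
\]
The commutativity of diagram \eqref{+di} then shows that $\overline{p}_{\lbe XY}^{\e r}$ \eqref{pxyb} factors as the composition of the left map above with the isomorphism \eqref{iso1}, so $\cok\overline{p}_{\lbe XY}^{\e r}\simeq\cok\e p_{\be XY}^{\le r}$ canonically. The statements about $\pic$ and $\npic$ in the $r=1$ case then follow immediately from the definitions \eqref{pxy1} and \eqref{pxy1b}.

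There is no substantial obstacle: the entire argument is a diagram chase that has essentially been assembled in the discussion preceding the statement, and the technical work has already been absorbed into Lemma \ref{ker-cok} (the standard kernel-cokernel sequence of a composition) together with the commutativity of \eqref{+di}. The only point requiring mild care is to check that all the identifications are natural, which follows from the fact that the maps $p^{(r)}_{X}$, $p^{(r)}_{Y}$, $f^{(r)}$, $g^{(r)}$ and $(\e\cdot\e)$ are defined functorially from the morphisms $f$, $g$, $p_{\be X}$, $p_{\e Y}$ and the multiplication map of $\bg_{m,\le S}$ via the functor $H^{r}(-,\bg_{m})$.
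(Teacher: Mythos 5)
Your proposal is correct and follows essentially the same route as the paper: both arguments apply Lemma \ref{ker-cok} to the two factorizations of $\theta^{\le r}_{\be XY}$ in diagram \eqref{+di}, use the surjectivity of the multiplication map to obtain the isomorphism \eqref{iso1}, and then read off $\cok p_{\lbe XY}^{\le r}\simeq\cok\overline{p}_{\lbe XY}^{\e r}$ from the tail of the six-term sequence \eqref{6} together with the factorization of $\overline{p}_{\lbe XY}^{\e r}$ through $\cok\theta^{\e r}_{\be XY}$. Nothing is missing.
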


In order to obtain information on $\krn\e p_{\lbe XY}^{\le r}$, we proceed as follows.

Consider the exact and commutative diagram
\begin{equation}\label{twin}
\xymatrix{& H^{r}\be(S_{\et},\bg_{m,S})\ar@{^{(}->}[d]^{\nabla^{r}}\ar@{-->}[dr]^(.45){\beta
^{\lle r}}&\\
\krn\le f^{\le(r)}\be\oplus\be \krn\le g^{(r)}\,\ar@{^{(}->}[r]\ar[dr]_(.45){(\e\cdot\e)}&\krn\e \theta^{\le r}_{\be XY}\ar@{->>}[d]^(.45){(\e\cdot\e)}\ar[r]^(.43){\!\left(\lbe f^{(\lbe r\lbe)}\!,\e g^{(\lbe r\lbe)}\be\right)}& \krn\e p_{\lbe XY}^{\e r}\ar@{->>}[r]&\krn\e\overline{p}_{\lbe XY}^{\e r}\\ 
&\krn\lbe (\le f\!\times_{\be S}\be g\le)^{(r)},&}
\end{equation}
where the row (respectively, column) is the sequence \eqref{imp} (respectively, \eqref{sec1}) and
\begin{equation}\label{beta}
\beta^{\le r}\overset{{\rm def.}}{=}\left(f^{(\lbe r\lbe)}\!,\e g^{(\lbe r\lbe)}\right)\be\circ\be \nabla^{r}\colon 
H^{r}\be(S_{\et},\bg_{m,S})\to \krn\e p_{\lbe XY}^{\le r}
\end{equation}
is the map $\xi\mapsto (f^{(r)}(\xi),g^{(r)}(\xi)^{-1}\le)$. Applying Lemma \ref{ker-cok} to the top triangle in \eqref{twin} and using the remaining parts of that diagram, we obtain an exact sequence
\[
\begin{array}{rcl}
0\to\krn\e\beta^{\le r}\to \krn f^{\le(r)}\e\oplus\e\e \krn g^{(r)}\overset{\!(\e\cdot\e)}{\lra}\krn\lbe (\le f\!\times_{\be S}\! g\le)^{(r)}&\to& \cok\e\beta^{\le r}\\
&\to&\krn\e\overline{p}_{\lbe XY}^{\e r}\to 0.
\end{array}
\]
Thus, by the exactness of \eqref{kks}, there exist a canonical isomorphism
\[
\krn\e\beta^{\le r}=\krn f^{\le(r)}\be\cap\be\krn g^{(r)}
\]
(this is also evident from the definition of $\beta^{\le r}\!$ \eqref{beta}) and a canonical exact sequence
\[
0\to\frac{\krn\lbe(\le f\!\times_{\be S}\be g\le)^{(r)}}{\krn\lbe f^{\le(r)}\!\cdot\be\krn\lbe g^{(r)}}\to\cok\lbe\beta^{\le r}\to \krn\e\overline{p}_{\lbe XY}^{\e r}\to 0.
\]
Thus the following holds.

\begin{proposition}\label{mad} Let $f\colon X\!\to\! S$ and $g\colon Y\!\to\! S$ be morphisms of schemes and let $\beta^{\le r}$ be the map \eqref{beta} associated to $f$ and $g$. Then, for every integer $r\geq 0$, there exists a canonical complex of abelian groups
\[
0\to\krn f^{\le(r)}\be\cap\be\krn g^{(r)}\to H^{\le r}\be(S_{\et},\bg_{m,S})\overset{\!\beta^{\le r}}{\lra} \krn p_{\lbe XY}^{\e r}\to\krn\e\overline{p}_{\lbe XY}^{\e r}\to 0
\]
which is exact except perhaps at $\krn p_{XY}^{\e r}$, where its homology is the quotient group $\krn\lbe(\le f\!\times_{\be S}\be g\le)^{(r)}\!\big/\!\be\left(\krn\lbe f^{\le(r)}\!\cdot\be\krn\lbe g^{(r)}\right)$.\qed
\end{proposition}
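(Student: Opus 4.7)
My plan is to deduce the proposition directly from the commutative diagram \eqref{twin} by applying Lemma \ref{ker-cok} to an appropriate factorization of $\beta^{\lle r}$. The whole statement is really an accounting of kernels and cokernels that arise from combining the column \eqref{sec1} and the row \eqref{imp} of \eqref{twin}, both of which are already established.

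First I would factor $\beta^{\le r}$ as
\[
H^{r}\be(S_{\et},\bg_{m,S})\xrightarrow{\nabla^{r}}\krn\theta^{\le r}_{\be XY}\xrightarrow{\,\overline{(f^{(\lbe r\lbe)},\,g^{(\lbe r\lbe)})\,}}\krn p_{\lbe XY}^{\e r},
\]
where the second map is the horizontal arrow of \eqref{twin} (i.e., the restriction to $\krn\theta^{\le r}_{\be XY}$ of the map $\left(f^{(\lbe r\lbe)},g^{(\lbe r\lbe)}\right)$). By Lemma \ref{ker-cok} applied to this pair, I obtain the six-term exact sequence
\[
0\to\krn\nabla^{r}\to\krn\beta^{\le r}\to\krn\overline{(f^{(\lbe r\lbe)},\,g^{(\lbe r\lbe)})}\to\cok\nabla^{r}\to\cok\beta^{\le r}\to\cok\overline{(f^{(\lbe r\lbe)},\,g^{(\lbe r\lbe)})}\to 0.
\]

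Next I would identify each of the four outer terms by reading off \eqref{sec1} and \eqref{imp}. The column \eqref{sec1} gives $\krn\nabla^{r}=0$ and $\cok\nabla^{r}\simeq\krn(f\!\times_{\lbe S}\! g)^{(r)}$ via the induced multiplication map. The row \eqref{imp} identifies $\krn\overline{(f^{(\lbe r\lbe)},\,g^{(\lbe r\lbe)})}=\krn f^{\le(r)}\be\oplus\be\krn g^{(r)}$ and $\cok\overline{(f^{(\lbe r\lbe)},\,g^{(\lbe r\lbe)})}=\krn\overline{p}_{\lbe XY}^{\e r}$. Substituting these identifications and checking (from the commutativity of the triangle on the left of \eqref{twin}) that the connecting map $\krn f^{\le(r)}\be\oplus\be\krn g^{(r)}\to\krn(f\!\times_{\lbe S}\! g)^{(r)}$ is exactly the multiplication map $(\e\cdot\e)$ of \eqref{a1}, I get
\[
0\to\krn\beta^{\le r}\to \krn f^{\le(r)}\be\oplus\be\krn g^{(r)}\xrightarrow{(\e\cdot\e)}\krn(f\!\times_{\lbe S}\! g)^{(r)}\to\cok\beta^{\le r}\to\krn\overline{p}_{\lbe XY}^{\e r}\to 0.
\]

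Now the already-established exact sequence \eqref{kks} identifies the kernel of $(\e\cdot\e)$ with $\krn f^{\le(r)}\cap\krn g^{(r)}$ and its cokernel with $\krn(f\!\times_{\lbe S}\! g)^{(r)}/(\krn f^{\le(r)}\!\cdot\!\krn g^{(r)})$. This gives $\krn\beta^{\le r}=\krn f^{\le(r)}\cap\krn g^{(r)}$ (which is also immediate from the formula $\beta^{\le r}(\xi)=(f^{(r)}(\xi),g^{(r)}(\xi)^{-1})$) and a short exact sequence
\[
0\to\krn(f\!\times_{\lbe S}\! g)^{(r)}\big/\!\left(\krn f^{\le(r)}\!\cdot\!\krn g^{(r)}\right)\to\cok\beta^{\le r}\to\krn\overline{p}_{\lbe XY}^{\e r}\to 0.
\]
Assembling these gives precisely the claimed complex, and the middle short exact sequence shows that its homology at $\krn p_{\lbe XY}^{\e r}$ is the stated quotient.

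The only point requiring care is bookkeeping in the middle: verifying that the connecting homomorphism produced by Lemma \ref{ker-cok} really is the multiplication map from \eqref{kks}, so that one may splice in that sequence. This is a straightforward diagram chase in \eqref{twin} using that the column is \eqref{sec1} and that the diagonal composition $(\e\cdot\e)\circ\nabla^{r}$ vanishes (since $\nabla^{r}(\xi)=(\xi,\xi^{-1})$). No new ingredient beyond what \eqref{sec1}, \eqref{imp}, \eqref{kks}, and Lemma \ref{ker-cok} already provide should be needed.
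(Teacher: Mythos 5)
Your proposal is correct and follows the paper's own argument essentially verbatim: the paper likewise applies Lemma \ref{ker-cok} to the factorization $\beta^{\le r}=\left(f^{(\lbe r\lbe)}\!,g^{(\lbe r\lbe)}\right)\lbe\circ\lbe\nabla^{r}$ given by the top triangle of \eqref{twin}, reads off the outer terms from \eqref{sec1} and \eqref{imp}, identifies the connecting map with the multiplication map, and splices in \eqref{kks}. No gaps.
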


\begin{corollary}\label{cop} Let $f\colon X\to S$ and $g\colon Y\to S$ be morphisms of schemes such that $I\lbe(\le f\!\times_{\be S}\! g\le)$ is defined and is equal to $1$. Then, for every integer $r\geq 0$, there exists a canonical exact sequence of abelian groups
\[
0\to H^{\le r}\be(S_{\et},\bg_{m,S})\overset{\!\beta^{\le r}}{\lra} \krn p_{\lbe XY}^{\e r}\to\krn\e\overline{p}_{\lbe XY}^{\e r}\to 0,
\]
where $\beta^{\le r}$ is the map \eqref{beta} and the second nontrivial map is the canonical projection.
\end{corollary}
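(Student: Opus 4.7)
The plan is to combine Proposition \ref{mad} with Proposition \ref{kad} applied to the given hypothesis. Proposition \ref{mad} produces a canonical complex
\[
0\to\krn f^{\le(r)}\be\cap\be\krn g^{(r)}\to H^{\le r}\be(S_{\et},\bg_{m,S})\overset{\!\beta^{\le r}}{\lra} \krn p_{\lbe XY}^{\e r}\to\krn\e\overline{p}_{\lbe XY}^{\e r}\to 0
\]
with the only possible failure of exactness at $\krn p_{XY}^{\e r}$, where the homology equals the quotient $\krn\lbe(\le f\!\times_{\be S}\be g\le)^{(r)}\!\big/\!\be\left(\krn\lbe f^{\le(r)}\!\cdot\be\krn\lbe g^{(r)}\right)$.

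The remaining task is to verify that, under the assumption $I(\le f\!\times_{\be S}\! g\le)=1$, both the leftmost term $\krn f^{\le(r)}\!\cap\! \krn g^{(r)}$ and the homology group at $\krn p_{\lbe XY}^{\e r}$ vanish. Both vanishings are explicitly part of the conclusion of Proposition \ref{kad}: indeed, that proposition asserts that the multiplication map $\krn\le f^{\le(r)}\be\oplus\be \krn\le g^{(r)}\to \krn\lbe (\le f\!\times_{\be S}\be g\le)^{(r)}$ is an isomorphism, and records both identities $\krn\lbe f^{\le(r)}\be\cap\be \krn g^{(r)}=0$ and $\krn\lbe(\le f\!\times_{\be S}\be g\le)^{(r)}\!\big/\!\be\left(\krn\lbe f^{\le(r)}\!\cdot\be\krn\lbe g^{(r)}\right)=0$ as consequences of $I(\le f\!\times_{\be S}\! g\le)=1$.

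Substituting these vanishings into the complex of Proposition \ref{mad} converts it into the short exact sequence claimed in the statement, with $\beta^{\le r}$ injective by the vanishing of the kernel-intersection and the second nontrivial arrow identified with the canonical projection $\krn p_{\lbe XY}^{\e r}\to\krn\e\overline{p}_{\lbe XY}^{\e r}$ appearing in the original exact sequence \eqref{imp}. Since the argument is a direct assembly of Propositions \ref{mad} and \ref{kad}, there is no real obstacle; the only point to double-check is that the surjection on the right is indeed the projection map as described, which follows from how the map was constructed in diagram \eqref{twin}.
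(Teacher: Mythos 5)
Your proposal is correct and is exactly the paper's argument: the paper proves Corollary \ref{cop} by citing Proposition \ref{mad} together with Proposition \ref{kad}, whose conclusion supplies precisely the two vanishings (of $\krn f^{\le(r)}\cap\krn g^{(r)}$ and of the homology at $\krn p_{\lbe XY}^{\e r}$) that you identify. Your write-up just makes the same assembly explicit.
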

\begin{proof} This is immediate from the proposition using Proposition \ref{kad}.
\end{proof}

\begin{corollary}\label{cop0} Let $f\colon X\!\to\! S$ and $g\colon Y\!\to\! S$ be morphisms of schemes such that $I\lbe(\le f\!\times_{\be S}\! g\le)$ is defined and is equal to $1$. Assume, furthermore, that the canonical map $\npic\be(X\be/\be S\le)\oplus\npic\be(Y\be/\be S\le)\to\npic\be(X\!\times_{\be S}\be Y\be/\be S\le)$ \eqref{pxy1b} is injective. Then there exists a canonical exact sequence of abelian groups
\[
0\to\pic S\to\pic X\be\oplus\be\pic Y\to \pic\be(X\!\times_{\be S}\! Y\e).
\]
\end{corollary}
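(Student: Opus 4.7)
The plan is to specialize Corollary \ref{cop} to the case $r = 1$ and then use the injectivity hypothesis on the map $\overline{p}_{\lbe XY}^{\e 1}$ to collapse the resulting short exact sequence into the desired four-term one.

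Concretely, I would first identify $H^{1}(S_{\et}, \bg_{m,\le S})$ with $\pic S$ and apply Corollary \ref{cop} with $r = 1$. This yields the canonical exact sequence of abelian groups
\[
0 \to \pic S \overset{\beta^{\le 1}}{\lra} \krn \e p_{\lbe XY}^{\le 1} \to \krn \e \overline{p}_{\lbe XY}^{\e 1} \to 0,
\]
where $p_{\lbe XY}^{\le 1}$ is the map \eqref{pxy1} and $\overline{p}_{\lbe XY}^{\e 1}$ is the map \eqref{pxy1b}. The second hypothesis of the corollary asserts precisely that $\krn \e \overline{p}_{\lbe XY}^{\e 1} = 0$, so the sequence collapses to an isomorphism $\beta^{\le 1}\colon \pic S \isoto \krn \e p_{\lbe XY}^{\le 1}$.

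Since $\krn \e p_{\lbe XY}^{\le 1}$ is by definition the kernel of the map $p_{\lbe XY}^{\le 1}\colon \pic X \oplus \pic Y \to \pic\be(X \be\times_{\be S}\be Y\le)$, the desired exact sequence $0 \to \pic S \to \pic X \oplus \pic Y \to \pic\be(X\be\times_{\be S}\be Y\le)$ follows at once, with the first map sending $\xi$ to $(\pic f(\xi), \pic g(\xi)^{-1})$, as prescribed by the definition of $\beta^{\le 1}$ in \eqref{beta}. I do not foresee any substantial obstacle here: the essential work has already been carried out in Corollary \ref{cop}, which in turn relies on Proposition \ref{kad} (via the hypothesis $I\lbe(\le f\!\times_{\be S}\!\le g\le)=1$) to ensure the vanishing of the obstruction term $\krn\lbe(\le f\!\times_{\be S}\be g\le)^{(1)}\!\big/\!\be\left(\krn\lbe \pic f \cdot \krn\lbe \pic g\right)$ that would otherwise appear in the exact sequence of Proposition \ref{mad}.
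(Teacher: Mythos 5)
Your proposal is correct and is essentially identical to the paper's own proof: both apply Corollary \ref{cop} with $r=1$, use the injectivity of $\overline{p}_{\lbe XY}^{\e 1}$ to conclude that $\beta^{\le 1}\colon\pic S\to\krn p_{\lbe XY}^{\le 1}$ is an isomorphism, and read off the four-term exact sequence. No issues.
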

\begin{proof} Since $\overline{p}_{\lbe XY}^{\e 1}$ \eqref{pxy1b} is injective by hypothesis, the previous corollary shows that the map $\beta^{\le 1}\colon \pic S\to\krn\e p_{\lbe XY}^{\le 1}=\krn\!\lbe\left[\e\pic X\be\oplus\be\pic Y\to \pic\be(X\!\times_{\be S}\! Y\e)\e\right]$ \eqref{beta} is an isomorphism, where $p_{\lbe XY}^{\le 1}$ is the map \eqref{pxy1}. The corollary follows.	
\end{proof}

\section{The group of units of a product}\label{four}

The key to describing the kernel and cokernel of the map \eqref{pxy0} is the generalized
Rosenlicht additivity theorem of Corollary \ref{ros0} below. Recall that Rosenlicht's classical additivity theorem is the following statement: if $k$ is a field and $X$ and $Y$ are geometrically integral $k$-schemes of finite type, then the canonical homomorphism of abelian groups $U_{k}(X\le)\le\oplus\le U_{k}(\le Y\le)\to U_{k}(X\be\times_{k}\lbe\lbe Y\le)$ is an isomorphism, where $U_{k}(-)$ is the functor \eqref{uk} (see \cite[Theorem 2]{ros}). The preceding statement has been generalized by Conrad \cite{con} to the class of geometrically connected and geometrically reduced $k$-schemes locally of finite type. In this Section we extend Conrad's theorem to a relative setting, where the base $S=\spec k$ is replaced by an arbitrary reduced scheme. As in the case of a base field \cite{con}, the key result is Theorem \ref{ros1} below.

\smallskip

Let $f\colon X\to S$ and $g\colon Y\to S$ be morphisms of schemes and define a
morphism of \'etale sheaves on $S$
\begin{equation}\label{vt}
\vartheta\colon f_{\lbe *}\bg_{m,\le X}\oplus g_{*}\le\bg_{m,\le Y}\to (f\be\times_{\lbe S}\be g)_{*}\bg_{m,\e X\times_{\be S} Y}
\end{equation}
as follows:  via the isomorphisms \eqref{cb}, an element $(v,w)\in \bg_{m,\le S}(X\le)\oplus\bg_{m,\le S}(\le Y\le)$ may be regarded as a pair of $S$-morphisms $v\colon X\to \bg_{m,\e S}$ and $w\colon  Y\to \bg_{m,\e S}$. Then $\vartheta(S\le)$ is defined by mapping $(v,w)$ to the element of $\bg_{m,\e S}(X\be\times_{\be S}\lbe Y\le)$ which corresponds to the $S$-morphism
\begin{equation}\label{esm}
p_{XY}^{\le 0}\lbe(v,w)=p_{X}^{(0)}\be(v)\be\cdot\be p_{\e Y}^{(0)}\lbe(w)=(\e v\be\circ\be p_{X})\be\cdot\be(\e w\be\circ\be p_{\e Y})\colon X\times_{S}Y\to \bg_{m,\le S},
\end{equation}
where $p_{\lbe X}\colon X\!\times_{\be S}\! Y\!\to\! X$ and $p_{\e Y}\colon X\!\times_{\be S}\! Y\!\to\! Y$ are the canonical projections (see \eqref{iden} and \eqref{fo}). If $T\to S$ is any \'etale morphism, $\vartheta(T\le)\colon \bg_{m,\le T}\lle(X_{T})\oplus\bg_{m,\le T}\lle(\le Y_{T})\to \bg_{m,\le T}(X_{T}\!\times_{T}\! Y_{T})$ is defined similarly.

For every integer $r\geq 0$ we will make the identification
\[
H^{r}\be(S_{\et}, f_{\lbe *}\bg_{m,\le X}\oplus g_{*}\le\bg_{m,\le Y})=H^{\le r}\be(S_{\et},f_{\lbe *}\bg_{m,\le X})\oplus H^{\le r}\be(S_{\et},g_{\lbe *}\bg_{m,\le Y}).
\]
Now let
\[
\vartheta^{\le r}\colon H^{\le r}\be(S_{\et},f_{\lbe *}\bg_{m,\le X})\oplus H^{\le r}\be(S_{\et},g_{\lbe *}\bg_{m,\le Y})\to H^{\le r}\be(S_{\et},(\le f\!\times_{\be S}\be g)_{*}\bg_{m,\e X\times_{\be S} Y})
\]
be the map $H^{r}(S_{\et},\vartheta\le)$, i.e., 
\begin{equation}\label{vtr}
\vartheta^{\le r}\be(\xi,\zeta)=H^{\le r}\!\be\left(\le f_{\lbe *}\!\be\left(\le p_{\lbe X}^{\le\flat}\right)\right)\!(\xi\le)\!\cdot\! H^{\le r}\!\be\left(\le g_{\le *}\!\be\left(\le p_{\le Y}^{\le\flat}\right)\right)\!(\zeta\le)
\end{equation}
for all $\xi\in H^{\le r}(S_{\et},f_{\lbe *}\bg_{m,\le X})$ and $\zeta\in H^{\le r}(S_{\et},g_{\lbe *}\bg_{m,\le Y})$. When $r=0$, $\vartheta^{\e 0}$ agrees with $p_{XY}^{\le 0}$ (see \eqref{esm}):
\begin{equation}\label{t0}
\vartheta^{\e 0}\be(v,w)=H^{\le 0}\!\be\left(\le f_{\lbe *}\!\be\left(\le p_{\lbe X}^{\le\flat}\right)\right)\!(v)\!\cdot\! H^{\le 0}\!\be\left(\le g_{\le *}\!\be\left(\le p_{\le Y}^{\le\flat}\right)\right)\!(w)= p_{X}^{(0)}\be(v)\be\cdot\be p_{\e Y}^{(0)}\lbe(w)=p_{XY}^{\le 0}\be(v,w).
\end{equation}
By \eqref{need} and \eqref{fgp}, the following holds: for every $c\in\bg_{m,\le S}(\le S\le)$ and $(v,w)\in  \bg_{m,\le S}(X\le)\le\oplus\le\bg_{m,\le S}(\le Y\le)$, we have
\begin{equation}\label{vt3}
\vartheta^{\e 0}\lbe(\e f^{\le(0)}\lbe(c)\le v,w)=\vartheta^{\e 0}\lbe(\e v,g^{\le(0)}\lbe(c)\le w).
\end{equation}

\begin{remark}\label{warn} Since, in general, the equality $H^{\le r}\be(S_{\et},f_{\lbe *}\bg_{m,\le X})=H^{\le r}\be(X_{\et},\bg_{m,\le X})$ holds only for $r=0$, the developments of this Section do not lead to an additivity theorem for the Picard group of $X\be\times_{\be S}\be Y$. They do, however, yield one for its subgroup \eqref{sbgp}. See Corollary \ref{upb} below.
\end{remark}

Now recall the \'etale sheaves $\uxs, \uys$ and $U_{X\times_{\be S}Y/S}$ \eqref{uxs2}.

\begin{theorem}\label{ros1} Let $S$ be a reduced scheme and let $f\colon X\to S$ and $g\colon Y\to S$ be faithfully flat morphisms locally of finite presentation. Assume that 
\begin{enumerate}
\item[(i)] the maximal geometric fibers of $f$ and $g$ are reduced and connected and
\item[(ii)] $f\!\times_{S}\be g\e\colon X\be\times_{S}\be Y\to S$ has a section.
\end{enumerate}
Then, for every \'etale morphism $T\to S$, the canonical homomorphism of abelian groups
\[
\uxs\lbe(\le T\e)\lbe\oplus\lbe\uys(\le T\e)\to U_{X\times_{\be S}Y/S}\le(\le T\e)
\]
is an isomorphism.
\end{theorem}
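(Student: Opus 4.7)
By Lemma~\ref{sd-ci} the formation of $\uxs$, $\uys$ and $U_{X\times_{\be S}Y/S}$ commutes with étale base change, and the hypotheses of the theorem---reducedness of the base, faithful flatness and local finite presentation, reducedness and connectedness of the maximal geometric fibers (via Lemma~\ref{mf}), and the existence of a section of $f\be\times_{\be S}\be g$---are all preserved under the étale base change $T\to S$. It therefore suffices to establish the isomorphism at $T=S$. Let $\sigma\colon S\to X\be\times_{\be S}\be Y$ be the given section and set $\sigma_X=p_{\lbe X}\be\circ\be\sigma$, $\sigma_Y=p_{\e Y}\be\circ\be\sigma$, the induced sections of $f$ and $g$. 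By formula \eqref{class},
\[
\uxss=\g(X,\s O_X)^{\e *}\lbe/\lbe\g(S,\s O_S)^{\e *},\quad \uyss=\g(Y,\s O_Y)^{\e *}\lbe/\lbe\g(S,\s O_S)^{\e *},
\]
and $U_{X\times_{\be S}Y/S}(S)=\g(X\be\times_{\be S}\be Y,\s O)^{\e *}\lbe/\lbe\g(S,\s O_S)^{\e *}$, and the map in question sends $([v],[w])$ to the class of $p_{\lbe X}^{\e *}v\cdot p_{\e Y}^{\e *}w$.

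The two key ingredients are Conrad's sequence \eqref{c1}, applied to each maximal geometric fiber $X_{\bar\eta}\be\times_{k(\bar\eta)}\be Y_{\bar\eta}$---the product of two geometrically connected and geometrically reduced schemes locally of finite type over the algebraically closed field $k(\bar\eta)$---and a Raynaud-type density statement: since $S$ is reduced and $f$ is flat and locally of finite presentation with geometrically reduced generic fibers, $X$ is reduced, and similarly $Y$ and $X\be\times_{\be S}\be Y$ are reduced. Consequently any regular function on these schemes that vanishes on every maximal geometric fiber must itself vanish, since by flatness the generic points of the total space lie over the maximal points of $S$ and are thus contained in the union of the maximal fibers.

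For injectivity, suppose $p_{\lbe X}^{\e *}v\cdot p_{\e Y}^{\e *}w$ is the pullback of some $c\in\g(S,\s O_S)^{\e *}$. Replacing $v$ by $(v\circ\sigma_X)^{-1}v$ and $w$ by $(w\circ\sigma_Y)^{-1}w$---which does not alter $[v]$ or $[w]$---we may assume $v\circ\sigma_X=w\circ\sigma_Y=1$; applying $\sigma^{\e *}$ to the equation then forces $c=1$, so that $p_{\lbe X}^{\e *}v\cdot p_{\e Y}^{\e *}w=1$. On each maximal geometric fiber, the injectivity part of Conrad's sequence together with the normalizations forces $v_{\bar\eta}=w_{\bar\eta}=1$, and the density input then yields $v=w=1$ globally. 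For surjectivity, given $u\in\g(X\be\times_{\be S}\be Y,\s O)^{\e *}$, replace $u$ by $(\sigma^{\e *}u)^{-1}u$ to arrange $\sigma^{\e *}u=1$, set $v=\iota_{\lbe X}^{\e *}u$ and $w=\iota_{\le Y}^{\e *}u$ where $\iota_{\lbe X}=(1_{\lbe X},\sigma_{\le Y}\be\circ\be f)\colon X\to X\be\times_{\be S}\be Y$ and $\iota_{\le Y}=(\sigma_{\lbe X}\be\circ\be g,1_{\le Y})\colon Y\to X\be\times_{\be S}\be Y$, and put $\tilde u=p_{\lbe X}^{\e *}v\cdot p_{\e Y}^{\e *}w$. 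A direct computation shows that $\sigma^{\e *}\tilde u=1$, $\iota_{\lbe X}^{\e *}\tilde u=v$ and $\iota_{\le Y}^{\e *}\tilde u=w$, so that $u$ and $\tilde u$ agree on $\sigma(S)$, $\iota_{\lbe X}(X)$ and $\iota_{\le Y}(Y)$. Applying Conrad's sequence to $u_{\bar\eta}/\tilde u_{\bar\eta}$ on each maximal geometric fiber, the uniqueness of the decomposition pinned down by these three restrictions forces $u_{\bar\eta}=\tilde u_{\bar\eta}$; the density input then upgrades this equality to $u=\tilde u$ globally, whence $[u]$ is the image of $([v],[w])$.

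The principal technical obstacle I anticipate is the Raynaud-type reducedness/density statement itself: the theorem hypothesizes reducedness only of the \emph{maximal} geometric fibers, not of every fiber, so to conclude that $X$, $Y$ and $X\be\times_{\be S}\be Y$ are reduced one must invoke openness of the geometrically reduced locus for flat morphisms locally of finite presentation, together with the flat descent of reducedness from fibers to the total space over a reduced base. Once this reducedness input is in hand, the passage from fiberwise Conrad decompositions to global identities is essentially formal.
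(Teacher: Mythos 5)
Your proposal is correct and follows essentially the same route as the paper: reduce to $T=S$ via Lemma \ref{sd-ci} and Lemma \ref{mf}, apply Conrad's sequence \eqref{c1} on the maximal fibers (normalized at the points cut out by the section), and propagate the resulting fiberwise identities to all of $X\times_S Y$ by Raynaud's density argument --- and the ``principal technical obstacle'' you flag is precisely what the paper extracts from \cite[${\rm IV}_{3}$, Proposition 11.10.4 and Theorem 11.10.5(ii)(b)]{ega} together with \cite[Corollary 5.4.4]{ega1}, namely schematic dominance of the family of maximal fibers, so there is no gap. The only cosmetic difference is in the injectivity step, which the paper handles purely formally by composing the relation $(v\circ p_X)\cdot(w\circ p_Y)=c\circ(f\times_S g)$ with the sections of the two projections induced by $\sigma$ and $\tau$, with no appeal to Conrad or to density, whereas you route injectivity through the fiberwise statement as well.
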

\begin{proof} We begin by observing that, since $S$ is reduced and $T\to S$ is \'etale, $T$ is reduced as well by \cite[${\rm IV}_{4}$, Proposition 17.5.7]{ega}. On the other hand, Lemma \ref{mf} shows that $f_{T}$ and $g_{\e T}$ satisfy the remaining conditions of the theorem  (over $T$). Thus, by Lemma \ref{sd-ci}, we may henceforth assume that $T=S$. 
Now let $\rho\le\colon\be S\to X\be\times_{\lbe S}\be Y$ denote the given section of $f\be\times_{S}\lbe g$. Then $\sigma=p_{X}\lbe\circ\lbe\rho\colon S\to X$ and $\tau=p_{\e Y}\be\circ\be\rho\colon S\to Y$ are sections of $f$ and $g$, respectively. Consequently, the sequence of \'etale sheaves on $S$
\[
1\to\bg_{m,\le S}\overset{\! h^{\lbe\flat}}{\to}h_{\lbe *}\bg_{m,\le Z}\to U_{Z/S}\to 1
\]
is split exact if $h\colon Z\to S$ is any one of $f,g$ or $f\be\times_{S} g$. Thus there exists a canonical commutative diagram whose rows are split exact sequences of abelian groups 
\begin{equation}\label{di}
\xymatrix{\bg_{m,\le S}\le(S\le)\be\oplus\be \bg_{m,\le S}\le(S\le)\,\ar@{^{(}->}[rr]^(.49){(\le f^{(0)}\be,\, g^{(0)})}\ar[d]^{(\e\cdot\e)}&& \bg_{m,\le S}(X\le)\oplus\bg_{m,\le S}(\le Y\le) \ar@{->>}[r]\ar[d]^(.45){\vartheta^{\lle 0}}& \uxss\le\oplus\le \uyss\ar[d]\\
\bg_{m,\le S}\le(S\le)\,\ar@{^{(}->}[rr]^(.5){(\e f\times_{\be S}\e g)^{(0)}}&& \bg_{m,\e S}(X\!\times_{\be S}\! Y\le)\ar@{->>}[r]& U_{\be S}(X\!\times_{\be S}\! Y\le),
}
\end{equation}
where the left-hand vertical arrow is the multiplication map and $U_{\be S}\colon ({\rm{Sch}}/S\e)\to \mathbf{Ab}$ is the functor \eqref{uxss}.

We first establish the injectivity of the right-hand vertical map in diagram \eqref{di}. Let $v\colon X\to \bg_{m,\e S}$ and $w\colon  Y\to \bg_{m,\e S}$ be $S$-morphisms such that the $S$-morphism $\vartheta^{\le 0}\lbe(v,w)=(v\le\circ\le p_{\lbe X}\lbe)\lbe\cdot\lbe (w\le \circ\le  p_{\e Y}\lbe)\colon X\times_{S}Y\to \bg_{m,\le S}$ lies in the image of the bottom left map $(\e f\be\times_{\be S}\be\e g)^{(0)}$ in diagram \eqref{di}, i.e,
\begin{equation}\label{clear}
(v\be\circ\be p_{\lbe X})\lbe\cdot\lbe (w\be \circ\be  p_{\e Y})=c\be\circ\be (\le f\be\times_{\be S}\be g\le)
\end{equation}
for some $S$-morphism $c\colon S\to \bg_{m,\le S}$ (see \eqref{iden} and \eqref{fo}). Since $\sigma_{Y}$ is a section of $p_{\e Y}$, we have\,\,\footnote{Here we use freely certain well-known identifications, e.g., $p_{X}=1_{X}\!\times_{\be S}\! g$,  $g=1_{S}\!\times_{\be S}\! g$, etc. See \cite[Corollary 1.2.8, p.~28]{ega1}.}
\[
\begin{array}{rcl}
(v\be\circ\be p_{\lbe X})\be\circ\be\sigma_{Y}&=& v\be\circ\be\sigma\be\circ\be  g=g^{\le(0)}(v\be\circ\be\sigma),\\
(w\be \circ\be p_{\e Y})\be \circ\be\sigma_{Y}&=& w,\\
c\be\circ\be(\le f\be \times_{\be S}\be g\le)\be \circ\be \sigma_{Y}&=& c\be\circ\be g=g^{\le(0)}(c).
\end{array}
\]
Composing both sides of the identity \eqref{clear} with $\sigma_{Y}$ from the right and using the preceding formulas, we obtain $g^{\le(0)}\lbe(v\lbe\circ\lbe\sigma)\cdot w=g^{\le(0)}(c)$, whence $w\in\img g^{\le(0)}$. A similar argument, using the section $\tau_{\lbe X}\colon X\to X\be\times_{S}\be Y$ of $p_{\lbe X}$ in place of $\sigma_{Y}$, shows that $v\in \img f^{\le(0)}$.

We now prove the surjectivity of the right-hand vertical map in diagram \eqref{di}. It suffices to show that the map $\vartheta^{\le 0}$ in \eqref{di} is surjective.  The proof combines the surjectivity of this map over a field, i.e., \cite[Theorem 1.1]{con}, and a density argument of Raynaud \cite[proof of Corollary VII 1.2, p.~103]{ray}. Let $u\colon X\times_{\lbe S} Y\to \bg_{m,\e S}$ be an $S$-morphism, i.e., an element of $\bg_{m,\e S}(X\!\times_{\be S}\be Y\le)$, and consider the following $S$-morphisms $X\times_{\lbe S} Y\to \bg_{m,\e S}\,$:
\[
\begin{array}{rcl}
u_{\e 00}&=&(\e f\be\times_{\be S}\be g)^{(0)}(\e \rho^{\le(0)}\lbe(\le u\le))=p_{ X}^{\le(0)}\lbe(\le f^{\le(0)}\lbe(\e \rho^{\le(0)}\lbe(\le u\le)))=p_{\e Y}^{\le(0)}\lbe(\e g^{\le(0)}(\e \rho^{\le(0)}\lbe(\le u\le))),\\
u_{\e 10}&=& p_{\be X}^{\le(0)}\lbe(\le\tau_{\lbe X}^{\le(0)}\lbe(\le u\le)),\\
u_{\e 01}&=& p_{\le Y}^{\le(0)}\lbe(\sigma_{Y}^{\le(0)}\lbe(\le u\le))
\end{array}
\]
(recall that $\rho\le\colon\be S\to X\be\times_{\lbe S}\be Y$ is the given section of $f\!\times_{\lbe S}\! g\e$). Then $u$ has the form $\vartheta^{\le 0}\be(v,w)=p_{\lbe X}^{\le(0)}\be(\le v)\lbe\cdot\lbe p_{\le  Y}^{\le(0)}\be(\le w)$ for some $S$-morphisms $v\colon X\to \bg_{m,\e S}$ and $w\colon  Y\to \bg_{m,\e S}$ if, and only if, the $S$-morphism
\begin{equation}\label{uti}
\widetilde{u}=u\be\cdot\be u_{\le 00}\be\cdot\be u_{10}^{-1}\be\cdot\be u_{01}^{-1}\,\colon\, X\be\times_{\lbe S}\be Y\to \bg_{m,\e S}
\end{equation}
factors through the unit section $\varepsilon$ of $\bg_{m,\e S}$. Let $\varepsilon(\le S\le)$ denote the closed subscheme of $\bg_{m,\e S}$ determined by $\varepsilon$,  let $W=\widetilde{u}^{-1}(\varepsilon(\le S\le))$ be the corresponding closed subscheme of $X\be\times_{S}\lbe Y$ \cite[\S 4.3, p.~263]{ega1} and write $M$ for the set of maximal points of $S$. For every $\eta\in M$, set $k=k(\eta)$ and let $(x_{0},y_{0})\in X\be(k)\!\times\! Y\be(k)$ correspond to $\rho_{\le\eta}\colon\eta\to(X\be\times_{\lbe S}\be Y\le)_{\eta}$. By hypothesis (i), \cite[Theorem 1.1]{con} holds, i.e., the sequence \eqref{c1} is exact. Thus every unit $u(x,y)$ on $X\times_{k}Y$ has the form 
$u_{\be X}\be(x)u_{Y}\be(y)$ for some units $u_{\be X}$ and $u_{Y}$ on $X$ and $Y$, respectively. Consequently, $u(x_{0},-)=u_{X}(x_{0})u_{Y}$, $u(-,y_{0})=u_{Y}(y_{0})u_{\be X}$ and $u_{\be X}\lbe(x_{0})u_{Y}\lbe(y_{0})=u(x_{0},y_{0})$, whence
\[
u(x,y)u(x_{0},y_{0})u(x,y_{0})^{-1}u(x_{0},y)^{-1}=1
\]
(the preceding argument is due to Conrad \cite[beginning of \S 2]{con}). The units $u(x_{0},y_{\le 0}),u(x,y_{0})$ and $u(x_{0},y)$ are instances of the units $u_{\e 00}, u_{\e 10}$ and $u_{\e 01}$ (respectively) when $S=\spec k$. It follows that the $\eta\e$-morphism $\widetilde{u}_{\e\eta}\colon (X\be\times_{\lbe S}\be Y\le)_{\eta}\to \bg_{m,\e\eta}$ \eqref{uti} factors through the unit section $\varepsilon_{\e\eta}$ of $\bg_{m,\e \eta}$ for every $\eta\in M$. We conclude that $W$ majorizes the maximal fibers $(X\be\times_{\lbe S}\be Y\le)_{\eta}$ of
$f\be\times_{\be S}\lbe g$ . On the other hand, since $S$ is reduced, the family of canonical morphisms $\{\eta\to S\}$ is schematically dominant \cite[${\rm IV}_{3}$, Proposition 11.10.4]{ega}, whence $\{(X\times_{\be S} Y\le)_{\eta}\to X\times_{\be S} Y\}$ is schematically dominant as well \cite[${\rm IV}_{3}$, Theorem 11.10.5(ii)(b)]{ega}. Thus $W=X\be\times_{S}\be Y$ by  \cite[Corollary 5.4.4, p.~285]{ega1}, which completes the proof. 
\end{proof}

\begin{corollary} \label{ros0} {\rm (Rosenlicht's additivity theorem over a reduced base)} Let $S$ be a reduced scheme and let $f\colon X\to S$ and $g\colon Y\to S$ be faithfully flat morphisms locally of finite presentation whose maximal geometric fibers are reduced and connected. If $f\!\times_{\lbe S}\be g\e\colon X\be\times_{S}\be Y\to S$ has an \'etale quasi-section, then the canonical morphism $\uxs\lbe\oplus\lbe\uys\to U_{X\times_{S}Y/S}$ \eqref{fxy} is an isomorphism of \'etale sheaves on $S$.
\end{corollary}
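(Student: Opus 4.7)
The plan is to reduce to Theorem \ref{ros1} via étale descent. A morphism of étale sheaves on $S$ is an isomorphism if and only if its pullback to an étale cover is an isomorphism; we will exploit the hypothesized étale quasi-section of $f\!\times_{\lbe S}\lbe g$ to supply such a cover.

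Let $\alpha\colon T\to S$ be a finite (or just étale) quasi-section of $f\!\times_{\lbe S}\lbe g$, with associated $S$-morphism $h\colon T\to X\be\times_{\lbe S}\be Y$. Then $\alpha$ is étale and surjective, so $\{T\to S\}$ is a covering in $S_{\et}$, and the induced $T$-morphism $(h,1_{T})_{S}\colon T\to (X\be\times_{\lbe S}\be Y\le)_{T}=X_{T}\!\times_{T}\be Y_{T}$ is a section of $f_{T}\!\times_{T}\be g_{T}=(f\!\times_{\lbe S}\be g)_{T}$. Thus after pulling back to $T$, the map $f\!\times_{\lbe S}\lbe g$ acquires a section. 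The plan is therefore to pull the canonical map of the corollary back along $\alpha$ and apply Theorem \ref{ros1} over $T$.

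First, by Lemma \ref{sd-ci} we have canonical identifications $\alpha^{*}\uxs=U_{X_{T}/T}$, $\alpha^{*}\uys=U_{Y_{T}/T}$ and $\alpha^{*}U_{X\times_{\lbe S}\lbe Y/S}=U_{(X\times_{\lbe S}\lbe Y)_{T}/T}=U_{X_{T}\times_{T}\!Y_{T}/T}$, and under these identifications $\alpha^{*}$ sends the canonical map $\uxs\!\oplus\!\uys\to U_{X\times_{\lbe S}\lbe Y/S}$ to the analogous canonical map $U_{X_{T}/T}\!\oplus\! U_{Y_{T}/T}\to U_{X_{T}\times_{T}\!Y_{T}/T}$ associated to $f_{T}$ and $g_{T}$. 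Next, I verify that the hypotheses of Theorem \ref{ros1} are satisfied by $f_{T}$ and $g_{T}$ over the base $T$: the scheme $T$ is reduced by \cite[${\rm IV}_{4}$, Proposition 17.5.7]{ega} since $\alpha$ is étale and $S$ is reduced; the morphisms $f_{T}$ and $g_{T}$ remain faithfully flat and locally of finite presentation by base change; their maximal geometric fibers are reduced and connected by Lemma \ref{mf}; and, as noted above, $(f\!\times_{\lbe S}\lbe g)_{T}$ has a section.

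Theorem \ref{ros1} therefore applies, yielding that $(U_{X_{T}/T}\!\oplus\! U_{Y_{T}/T})(T^{\e\prime}\le)\to U_{X_{T}\times_{T}\!Y_{T}/T}(T^{\e\prime}\le)$ is an isomorphism for every étale morphism $T^{\e\prime}\to T$. Equivalently, $\alpha^{*}$ of the canonical map $\uxs\!\oplus\!\uys\to U_{X\times_{\lbe S}\lbe Y/S}$ is an isomorphism of étale sheaves on $T$. Since $\alpha\colon T\to S$ is an étale cover and the forgetful functor $S_{\et}^{\le\sim}\to T_{\et}^{\e\sim}$ reflects isomorphisms, we conclude that the original map is an isomorphism in $S_{\et}^{\le\sim}$.

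The main point that might look like an obstacle, the passage from a section (as required by Theorem \ref{ros1}) to a mere étale quasi-section (as allowed here), is dissolved essentially for free by the fact that the target is a sheaf on $S_{\et}$: the quasi-section \emph{is} precisely the étale cover on which we can reduce to the case of an honest section. Everything else is routine bookkeeping based on Lemma \ref{mf} and Lemma \ref{sd-ci}.
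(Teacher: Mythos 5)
Your proof is correct and is essentially the paper's own argument: pass to the étale cover $T\to S$ furnished by the quasi-section, identify the pulled-back sheaves via Lemma \ref{sd-ci}, check the hypotheses of Theorem \ref{ros1} over $T$ (reducedness of $T$, Lemma \ref{mf}, and the induced section of $(f\times_S g)_T$), and conclude because $\alpha^*$ reflects isomorphisms of étale sheaves (the paper cites \cite[IV, Corollary 4.5.8]{sga3} for this last point). No gaps.
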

\begin{proof} By \cite[IV, Corollary 4.5.8]{sga3} and Lemma \ref{sd-ci}, we may replace $S,f$ and $g$ by $T, f_{T}$ and $g_{\e T}$ (respectively) for an appropriate \'etale and surjective morphism $T\to S$. Now, since $f\!\times_{\lbe S}\be g\e\colon X\be\times_{S}\be Y\to S$ has an \'etale quasi-section, there exists a morphism $T\to S$ as above such that $T,f_{T}$ and $g_{\e T}$ satisfy all the hypotheses of the theorem. The corollary follows.
\end{proof}

We will now develop a number of consequences of the preceding corollary.

\smallskip

Under the hypotheses of Corollary \ref{ros0}, the following holds. There exists an exact and commutative diagram of abelian sheaves on $S_{\et}$:
\begin{equation}\label{bbd}
\xymatrix{1\ar[r]&\bg_{m,\le S}\be\oplus\be \bg_{m,\le S}\,\ar[r]^(.42){(\e f^{\le\flat}\!,\, g^{\le\flat})}\ar@{->>}[d]^(.47){(\e\cdot\e)}& f_{\lbe *}\bg_{m,\le X}\oplus g_{*}\le\bg_{m,\le Y}\ar[r]\ar[d]^{\vartheta}& \uxs\le\oplus\le\uys\ar[d]^{\simeq}\ar[r]&1\\
1\ar[r]&\bg_{m,\le S}\,\ar[r]& (f\be\times_{\lbe S}\be g)_{*}\bg_{m,\e X\times_{\be S} Y}\ar[r]& U_{X\times_{S}Y/S}\ar[r]& 1,
}
\end{equation}
where the middle vertical morphism is the map \eqref{vt}. The kernel of $(\e\cdot\e)$ can be identified with $\bg_{m,\le S}$ via the morphism $\nabla$ \eqref{nb}. Thus the diagram yields an exact sequence of \'etale sheaves on $S$
\begin{equation}\label{cseq}
1\to \bg_{m,\le S}\overset{\delta}{\to} f_{\lbe *}\bg_{m,\le X}\!\oplus\be g_{\lle *}\le\bg_{m,\le Y}\overset{\!\be\vartheta}{\to}(f\!\times_{\be S}\be g)_{*}\bg_{m,\e X\times_{\be S} Y}\to 1,
\end{equation}
where $\delta=(\le f^{\le\flat}\be,g^{\le\flat})\!\circ\!\nabla$. Note that, for every integer $r\geq 0$, the map
\[
\delta^{\le r}=H^{r}\be(S_{\et},\delta)\colon H^{\le r}\be(S_{\et},\bg_{m,\le S})\to H^{\le r}\be(S_{\et},f_{\lbe *}\bg_{m,\le X})\oplus H^{\le r}\be(S_{\et},g_{\lbe *}\bg_{m,\le Y}\be)
\]
is given explicitly by
\begin{equation}\label{psir}
\delta^{\le r}=\left(\le H^{\le r}\be\!\left(\le f^{\e\flat}\le\right)\le,H^{\le r}\!\be\left(\le g^{\e\flat}\le\right)^{\be-1}\le\right).
\end{equation}
When $r=0$, we have
\begin{equation}\label{de0}
\delta^{\le 0}\lbe(c)=\left(\e f^{\le(0)}\lbe(c),g^{\le (0)}\lbe(c)^{-1}\right)
\end{equation}
for every $c\in\bg_{m,\le S}(\le S\le)$ (see \eqref{fo}). Note also that 
$\delta^{\le 0}=\beta^{\le 0}$ \eqref{beta} but that, in general, $\beta^{\le r}\neq \delta^{\le r}$ for $r>0$ (see Remark \ref{warn}).

\begin{corollary}\label{kres} Let $S$ be a reduced scheme and let $f\colon X\to S$ and $g\colon Y\to S$ be faithfully flat morphisms locally of finite presentation with reduced and connected maximal geometric fibers. Assume that
\begin{enumerate}
\item[(i)] $f$ has an \'etale quasi-section and 
\item[(ii)] $g$ has a section $\sigma\colon S\to Y$.
\end{enumerate}
Then $\sigma$ induces an isomorphism of \'etale sheaves on $S$
\[
f_{\lle *}\le\bg_{m,\le X}\oplus\uys\overset{\!\sim}{\to}
(f\!\times_{\be S}\! g)_{*}\bg_{m,\e X\times_{\be S} Y}.
\]
Further, \eqref{cseq} is a split exact sequence of abelian sheaves on $S_{\et}$.
\end{corollary}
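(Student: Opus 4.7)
The plan is to deduce both assertions from Corollary~\ref{ros0} by using the section $\sigma$ to split the exact sequence~\eqref{cseq}.

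First I verify that Corollary~\ref{ros0} applies to $f$ and $g$. If $\alpha\colon T\to S$ is an \'etale quasi-section of $f$ with associated $S$-morphism $h\colon T\to X$, then $(h,\sigma\be\circ\be\alpha)_{\be S}\colon T\to X\be\times_{\be S}\be Y$ is an $S$-morphism whose composition with $f\be\times_{\be S}\be g$ equals $\alpha$. Hence $\alpha$ is an \'etale quasi-section of $f\be\times_{\be S}\be g$ as well, so Corollary~\ref{ros0} applies and~\eqref{cseq} is an exact sequence of \'etale sheaves on $S$.

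Next I split~\eqref{cseq} by means of $\sigma$. Since $g\be\circ\be\sigma=1_{S}$, formula~\eqref{marv2} gives $\sigma^{\flat}\be\circ\be g^{\flat}=1_{\bg_{m,S}}$. Define the morphism of \'etale sheaves on $S$
\[
r\colon f_{*}\bg_{m,X}\oplus g_{*}\bg_{m,Y}\lra\bg_{m,S},\qquad (u,v)\mapsto \sigma^{\flat}(v)^{-1}.
\]
Since $\delta(c)=(f^{\flat}(c),g^{\flat}(c)^{-1})$, a short direct calculation yields $r\be\circ\be\delta=1_{\bg_{m,S}}$, so $r$ is a retraction of $\delta$ and~\eqref{cseq} is split exact. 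This establishes the second assertion of the corollary.

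Finally I identify $\ker r$ with $f_{*}\bg_{m,X}\oplus\uys$, which will produce the desired first isomorphism. The same retraction $\sigma^{\flat}$ splits the exact sequence $1\to \bg_{m,S}\overset{g^{\flat}}{\to} g_{*}\bg_{m,Y}\to\uys\to 1$ of Lemma~\ref{sch-dom} (recall that $g$ is schematically dominant by Lemma~\ref{sor}(i)), yielding an isomorphism $g_{*}\bg_{m,Y}\simeq \bg_{m,S}\oplus\uys$ under which $\ker\sigma^{\flat}\simeq\uys$. Consequently $\ker r=f_{*}\bg_{m,X}\oplus\ker\sigma^{\flat}\simeq f_{*}\bg_{m,X}\oplus\uys$. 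Since~\eqref{cseq} is split by $r$, the restriction of $\vartheta$ induces an isomorphism $\ker r\isoto (f\be\times_{\be S}\be g)_{*}\bg_{m,X\times_{\be S} Y}$; composing with the preceding identification yields the claimed isomorphism, which is explicitly $(u,\bar w)\mapsto \vartheta(u,w)$ with $w$ the unique lift of $\bar w$ lying in $\ker\sigma^{\flat}$. Both the retraction and the lifting are built from $\sigma$, so the isomorphism is indeed induced by $\sigma$. I do not foresee any serious obstacle: once Corollary~\ref{ros0} is in hand, the proof is a bookkeeping exercise with split short exact sequences, and the only mild verification is the \'etale quasi-section check for $f\be\times_{\be S}\be g$.
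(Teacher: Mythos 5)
Your proof is correct and takes essentially the same route as the paper: the quasi-section verification for $f\times_{S}g$ is identical, and your retraction $r\colon(u,v)\mapsto g_{*}(\sigma^{\flat})(v)^{-1}$ of $\delta$ encodes exactly the splitting that the paper builds from the section $\tau\colon\uys\to g_{*}\bg_{m,\le Y}$ determined by $\sigma$ (your map $(u,\bar w)\mapsto\vartheta(u,w)$, $w\in\ker g_{*}(\sigma^{\flat})$, is the paper's $\vartheta^{\e\prime}=\vartheta\circ(1,\tau)$). The only difference is organizational — you split \eqref{cseq} first and read the isomorphism off $\ker r$ formally, whereas the paper proves $\vartheta^{\e\prime}$ is an isomorphism directly and then deduces the splitting — and your order of deduction is, if anything, slightly cleaner.
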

\begin{proof} 
Let $\alpha\colon T\to S$ be an \'etale quasi-section of $f$, i.e.,
$\alpha$ is \'etale and surjective and there exists a morphism $h\colon T\to X$ such that $f\circ h=\alpha$. Then the morphism $\beta=\alpha\times_{S}1_{S}\colon T\times_{S}S\to S\times_{S}S\simeq S$ is \'etale and surjective and $h\times_{S}\sigma\colon T\times_{S}S\to X\times_{S}Y$ satisfies
$(\e f\be\times_{\be S}\lbe g)\lbe\circ\lbe(\e h\be\times_{\be S}\lbe \sigma)=\beta$, i.e., $\beta$ is an \'etale quasi-section of $f\be\times_{\be S}\lbe g$. Thus the conditions of Corollary \ref{ros0} are satisfied, whence diagram \eqref{bbd} is exact and commutative and the sequence \eqref{cseq} is exact. Now $\sigma$ defines a retraction of $g^{\le\flat}\colon \bg_{m,\le S}\to g_{*}\bg_{m,\le Y}$ which, in turn, defines a section $\tau\colon\uys\to g_{*}\bg_{m,\le Y}$ of the canonical morphism $g_{*}\bg_{m,\le Y}\to\uys$. Let
\[
\vartheta^{\,\prime}=\vartheta\circ (\e 1_{f_{*}\lbe\bg_{m,\le X}},\tau)\colon f_{*}\le\bg_{m,\le X}\oplus\uys \to (f\!\times_{\be S}\! g)_{*}\bg_{m,\e X\times_{\be S} Y}.
\]
The injectivity of $\vartheta^{\e\prime}$ follows from diagram \eqref{bbd}. Its surjectivity follows from the surjectivity of $\vartheta$, \eqref{vt3} and the formula $g_{*}\bg_{m,\le Y}=\img g^{\le\flat}\!\be\cdot\be\img \tau$. Finally, the map
$(\e 1_{f_{*}\lbe\bg_{m,\le X}},\tau)\circ(\vartheta^{\e\prime}\le)^{-1}
\colon (f\!\times_{\be S}\be g)_{*}\bg_{m,\e X\times_{\be S} Y}\to f_{\lbe *}\bg_{m,\le X}\!\oplus\be g_{\lle *}\le\bg_{m,\le Y}$ is a section of $\vartheta$ which splits \eqref{cseq}.
\end{proof}

\smallskip

Assume again that the hypotheses of Corollary \ref{ros0} hold, so that \eqref{cseq} is an exact sequence of \'etale sheaves on $S$. Then \eqref{cseq} induces an exact sequence of abelian groups
\[
\begin{array}{rcl}
\dots\to H^{\le r}(S_{\et},\bg_{m,\le S})&\overset{\!\delta^{\le r}}{\lra}&
H^{\le r}(S_{\et},f_{\lbe *}\bg_{m,\le X})\oplus H^{\le r}(S_{\et},g_{\lbe *}\bg_{m,\le Y})\\
&\overset{\!\vartheta^{\le r}}{\lra}& H^{\le r}(S_{\et},(\le f\!\times_{\be S}\be g)_{*}\bg_{m,\e X\times_{\be S} Y})\to H^{\le r+1}(S_{\et},\bg_{m,\le S})\overset{\!\delta^{\le r+1}}{\lra}\dots
\end{array}
\]
where, for every $r\geq 0$, the maps $\delta^{\lle r}$ and $\vartheta^{\le r}$ are given by \eqref{psir} and \eqref{vtr}, respectively. Consequently, for every integer $r\geq 0$, there exists a canonical exact sequence of abelian groups
\begin{equation}\label{krr}
\begin{array}{rcl}
0&\to &\krn H^{r}\be(\le f^{\le\flat})\cap \krn H^{r}\be(\le g^{\le\flat})\to H^{\le r}(S_{\et},\bg_{m,\le S})\overset{\!\delta^{\le r}}{\to} H^{\le r}\be(S_{\et},f_{\lbe *}\bg_{m,\le X})\oplus H^{\le r}\lbe(S_{\et},g_{\lle *}\bg_{m,\le Y})\\
&\overset{\!\vartheta^{\le r}}{\to}& H^{\le r}(S_{\et},(\le f\!\times_{\be S}\be g)_{*}\bg_{m,\e X\times_{\be S} Y})\to \krn H^{\le r+1}\be(\le f^{\le\flat})\cap \krn H^{\le r+1}\be(\le g^{\le\flat})\to 0,
\end{array}
\end{equation}
where the intersections take place inside $H^{\le r}\be(S_{\et},\bg_{m,\le S})$ and $H^{\le r+1}\lbe(S_{\et},\bg_{m,\le S})$, respectively. Note that, by \eqref{mag2}, 
\[
\krn H^{1}\be(\le f^{\e\flat})\cap \krn H^{1}\be(\le g^{\le\flat})=\krn\pic f\le\cap\le
\krn\pic g.
\]
Thus, setting $r=0$ in \eqref{krr}, we obtain the following statement, which solves the problem of describing the kernel and cokernel of the map \eqref{pxy0} under the hypotheses stated below:

\begin{corollary} \label{prec} Let $S$ be a reduced scheme and let $f\colon X\to S$ and $g\colon Y\to S$ be faithfully flat morphisms locally of finite presentation with reduced and connected maximal geometric fibers. If $f\!\times_{\lbe S}\be g\e\colon X\be\times_{\lbe S}\be Y\to S$ has an \'etale quasi-section, then \eqref{cseq} induces an exact sequence of abelian groups
\[
1\to \bg_{m,\le S}\le(S\le)\overset{\!\delta^{\le 0}}{\to} \bg_{m,\le S}\le(X\le)\oplus \bg_{m,\le S}\le(\le Y\le)\overset{\!\lbe\vartheta^{\le 0}}{\to}\bg_{m,\le S}\le(X\!\times_{\lbe S}\! Y\le)\to \krn\pic f\le\cap\le \krn\pic g\to 0,
\]
where the intersection takes place inside $\pic S$ and the maps $\delta^{\le 0}$ and  $\vartheta^{\le 0}$ are given by \eqref{de0} and \eqref{t0}, respectively.\qed
\end{corollary}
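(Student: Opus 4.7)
The plan is to apply the cohomology long exact sequence to the short exact sequence of \'etale sheaves \eqref{cseq} and then identify terms in low degrees.

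First, I would invoke Corollary~\ref{ros0}, whose hypotheses match those of the statement. This yields the isomorphism $\uxs\oplus\uys\overset{\!\sim}{\to} U_{X\times_{S}Y/S}$ on $S_{\et}$, and, as explained in the paragraph following diagram \eqref{bbd}, this upgrades to the short exact sequence \eqref{cseq}
\[
1\to \bg_{m,\le S}\overset{\delta}{\to} f_{\lbe *}\bg_{m,\le X}\!\oplus\be g_{\lle *}\le\bg_{m,\le Y}\overset{\!\be\vartheta}{\to}(f\!\times_{\be S}\be g)_{*}\bg_{m,\e X\times_{\be S} Y}\to 1
\]
of abelian sheaves on $S_{\et}$.

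Next, I would apply the functor $H^{*}(S_{\et},-)$ to \eqref{cseq} to obtain the long exact sequence \eqref{krr} derived in the text, and then simply read off its initial segment (i.e.\ take $r=0$). On the one hand, by definition $H^{0}(S_{\et},\bg_{m,\le S})=\bg_{m,\le S}(S)$, $H^{0}(S_{\et},f_{*}\bg_{m,\le X})=\bg_{m,\le S}(X)$, $H^{0}(S_{\et},g_{*}\bg_{m,\le Y})=\bg_{m,\le S}(Y)$, and $H^{0}(S_{\et},(f\be\times_{\lbe S}\be g)_{*}\bg_{m,\e X\times_{\be S} Y})=\bg_{m,\le S}(X\be\times_{\lbe S}\be Y)$. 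On the other hand, formulas \eqref{de0} and \eqref{t0} identify the maps $\delta^{0}$ and $\vartheta^{0}$ appearing in the resulting sequence with those given in the statement of the corollary.

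For the left end: the leftmost term in \eqref{krr} at $r=0$ is $\krn H^{0}\be(\le f^{\le\flat})\cap \krn H^{0}\be(\le g^{\le\flat})$, and I would argue this vanishes as follows. Since $f$ is faithfully flat, Lemma~\ref{sor}(i) gives that $f$ is schematically dominant; by Lemma~\ref{sch-dom}, $f^{\le\flat}$ is then an injective morphism of \'etale sheaves on $S$, so $H^{0}(f^{\le\flat})$ is injective and $\krn H^{0}\be(\le f^{\le\flat})=0$. Consequently the leftmost term is $0$ and we obtain injectivity of $\delta^{0}$.

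For the right end: the connecting homomorphism lands in $\krn H^{1}\be(\le f^{\le\flat})\cap \krn H^{1}\be(\le g^{\le\flat})$, and by the identification \eqref{mag2} this intersection is canonically $\krn\pic f\le\cap\le\krn\pic g$ inside $\pic S$, which is exactly the term appearing on the right of the claimed sequence. Concatenating these identifications yields the desired exact sequence.

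The step requiring the most care is really the one already carried out in Theorem~\ref{ros1} and Corollary~\ref{ros0}, namely the exactness of \eqref{cseq}; everything in the present corollary is a formal unwinding of low-degree cohomology together with the injectivity observation coming from Lemmas \ref{sor}(i) and \ref{sch-dom} and the identification \eqref{mag2}. No further obstacles are anticipated.
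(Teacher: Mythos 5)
Your proposal is correct and follows essentially the same route as the paper: exactness of \eqref{cseq} via Corollary \ref{ros0}, the long exact cohomology sequence truncated to the $r=0$ segment of \eqref{krr}, and the identifications \eqref{de0}, \eqref{t0} and \eqref{mag2}. Your explicit justification that $\krn H^{0}(f^{\flat})=0$ (via Lemmas \ref{sor}(i) and \ref{sch-dom}) is a detail the paper leaves implicit when it says ``setting $r=0$ in \eqref{krr}'', and it is the right one.
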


If $r\geq 0$ is arbitrary, the following holds.

\begin{corollary}\label{upb} Let $S$ be a reduced scheme and let $f\colon X\to S$ and $g\colon Y\to S$ be faithfully flat morphisms locally of finite presentation with reduced and connected maximal geometric fibers. Assume that one of the \'etale indices $I(\le f\le)$ or $I(\le g\le)$ is defined and is equal to $1$, or
both are defined and $\gcd(I(f\le), I(g\le))=1$. Then, for every integer $r\geq 0$, there exists a canonical exact sequence of abelian groups
\[
0\to H^{\le r}\be(S_{\et},\bg_{m,\le S})\overset{\!\delta^{\le r}}{\to} H^{ r}\be(S_{\et},f_{\lbe *}\bg_{m,\le X})\oplus H^{\le r}\be(S_{\et},g_{\lbe *}\bg_{m,\le Y})\overset{\!\vartheta^{\le r}}{\to} H^{\le r}(S_{\et},(\le f\times_{\be S} g)_{*}\bg_{m,\e X\times_{\be S} Y})\to 0\e,
\]
where the maps $\delta^{\le r}$ and $\vartheta^{\le r}$ are given by \eqref{psir} and \eqref{vtr}, respectively. In particular, there exists a canonical exact sequence of abelian groups
\[
0\to \pic\e S\to H^{1}\be(S_{\et},f_{\lbe *}\bg_{m,\le X})\oplus H^{\le 1}\be(S_{\et},g_{\lbe *}\bg_{m,\le Y})\to H^{\le 1}(S_{\et},(\le f\!\times_{\be S}\be g)_{*}\bg_{m,\e X\times_{\be S} Y})\to 0,\e
\]
where the groups $H^{1}\be(S_{\et},f_{\lbe *}\bg_{m,\le X}),  H^{\le 1}\be(S_{\et},g_{\lbe *}\bg_{m,\le Y})$ and $H^{\le 1}(S_{\et},(\le f\!\times_{\be S}\be g)_{*}\bg_{m,\e X\times_{\be S} Y})$ can be identified with subgroups of $\pic X, \pic Y$ and $\pic(X\!\times_{\be S}\be Y)$, respectively \eqref{sbgp}. If either $f$ or $g$ has a section, then both sequences above are split exact. 
\end{corollary}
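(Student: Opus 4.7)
The plan is to obtain the sequence \eqref{cseq} as a short exact sequence of \'etale sheaves on $S_{\et}$ (via Corollary \ref{ros0}), take its long exact \'etale cohomology sequence (which is precisely \eqref{krr}), and then force the two intersection terms flanking each slice of \eqref{krr} to vanish by a restriction-corestriction argument analogous to the one in Remark \ref{sect}.

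To set up \eqref{cseq}, the hypothesis supplies finite \'etale quasi-sections $T_{\lbe f}\to S$ and $T_{\lbe g}\to S$ of $f$ and $g$ of constant degrees $d_{\lbe f}$ and $d_{\lbe g}$; their fiber product $T_{\lbe f}\lbe\times_{\be S}\lbe T_{\lbe g}\to S$ is finite \'etale surjective of constant degree $d_{\lbe f}d_{\lbe g}$ and carries the obvious $S$-morphism to $X\lbe\times_{\be S}\lbe Y$, hence is an \'etale quasi-section of $f\lbe\times_{\be S}\lbe g$. Corollary \ref{ros0} now yields the isomorphism $\uxs\oplus\uys\simeq U_{X\times_{\lbe S}\lbe Y/\lbe S}$, which, together with diagram \eqref{bbd} and the identification of $\krn[(\e\cdot\e)\colon\bg_{m,\le S}\oplus\bg_{m,\le S}\to\bg_{m,\le S}]$ with $\bg_{m,\le S}$ via $\nabla$, produces the short exact sequence \eqref{cseq}. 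Applying $H^{\le *}(S_{\et},-)$ then yields \eqref{krr} for every $r\geq 0$.

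It remains to show that $\krn H^{\le r}\be(\le f^{\le\flat})\cap\krn H^{\le r}\be(\le g^{\le\flat})=0$ for every $r\geq 0$. The exact sequence \eqref{mag} furnishes an injection $\krn H^{\le r}\be(\le f^{\le\flat})\hookrightarrow\krn f^{\le(r)}$, and Remark \ref{sect}(d) shows that $\krn f^{\le(r)}$ is annihilated by $I(\le f\le)$ whenever the latter is defined; the symmetric argument gives that $\krn H^{\le r}\be(\le g^{\le\flat})$ is killed by $I(\le g\le)$. In each of the three cases of the hypothesis the intersection is therefore annihilated by $1$, and substituting the vanishing into \eqref{krr} produces the desired short exact sequence of abelian groups. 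For the Picard variant, one specializes to $r=1$ and invokes \eqref{sbgp} to identify $H^{1}\be(S_{\et},f_{\lbe *}\bg_{m,X})$, $H^{1}\be(S_{\et},g_{\lbe *}\bg_{m,Y})$ and $H^{1}\be(S_{\et},(\le f\lbe\times_{\be S}\lbe g\le)_{*}\bg_{m,X\times_{S}Y})$ with subgroups of $\pic X$, $\pic Y$ and $\pic\be(X\lbe\times_{\be S}\lbe Y\le)$ respectively. Finally, if $f$ admits a section $\sigma$, then \eqref{marv2} gives $f_{\lbe *}(\sigma^{\le\flat})\circ f^{\le\flat}=1_{\bg_{m,\le S}}$, so $f^{\le\flat}$ has a retraction of \'etale sheaves; composing this retraction with the first projection $f_{\lbe *}\bg_{m,\le X}\oplus g_{\lbe *}\bg_{m,\le Y}\to f_{\lbe *}\bg_{m,\le X}$ produces a retraction of $\delta$, so \eqref{cseq} is split exact and its cohomology splits correspondingly in every degree. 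The case of a section of $g$ is symmetric.

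I expect the main technical point to be the vanishing $\krn H^{\le r}\be(\le f^{\le\flat})\cap\krn H^{\le r}\be(\le g^{\le\flat})=0$: this is where all three alternative forms of the index hypothesis are funneled through the restriction-corestriction mechanism in a unified way, and it is what explains the disjunctive formulation of the hypothesis.
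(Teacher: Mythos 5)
Your proposal is correct and follows the paper's own route essentially verbatim: exactness of \eqref{cseq} via Corollary \ref{ros0}, the long exact cohomology sequence \eqref{krr}, and the vanishing of $\krn H^{r}(f^{\flat})\cap\krn H^{r}(g^{\flat})$ obtained from the inclusion \eqref{mag} into $\krn f^{(r)}\cap\krn g^{(r)}$ together with Remark \ref{sect}(d), with your splitting argument (a retraction of $\delta$ induced by a section of $f$ or $g$) being an equivalent substitute for the paper's appeal to Corollary \ref{kres}. The one small caveat is that your construction of an \'etale quasi-section of $f\times_{S}g$ as $T_{f}\times_{S}T_{g}$ presupposes that \emph{both} $I(f)$ and $I(g)$ are defined, which the first disjunct of the hypothesis does not grant; the paper itself simply recalls \eqref{krr}, whose derivation carries the standing assumption that $f\times_{S}g$ admits an \'etale quasi-section, so on this point your treatment is if anything more explicit than the source, and it covers the $\gcd$ case completely.
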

\begin{proof} Recall the exact sequence \eqref{krr} and note that, for every $r\geq 0$, \eqref{mag} yields 
\[
\krn H^{r}\be(\le f^{\le\flat})\cap \krn H^{r}\be(\le g^{\le\flat})\subset\krn f^{\le(r)}\le\cap\le \krn g^{(r)},
\]
where the intersections take place inside $H^{\le r}\be(S_{\et},\bg_{m,\le S})$. Under either of the hypotheses on the \'etale indices of $f$ and $g$, $\krn f^{\le(r)}\le\cap\le \krn g^{(r)}$ vanishes for every $r\geq 0$ by Remark \ref{sect}(d). The first assertion of the corollary now follows from the exactness of \eqref{krr}. Further, if either $f$ or $g$ has a section then, by Corollary \ref{kres}, the sequence of \'etale sheaves \eqref{cseq} is split exact. The last assertion of the corollary is then clear.
\end{proof}

\begin{remarks}\label{ppic} Let $k$ be a field and let $X$ and $Y$ be $k$-schemes.
\begin{enumerate}
\item[(a)] If the separable indices of $X$ and $Y$ \eqref{ind} are defined and are coprime, then the corollary shows that, for every integer $r\geq 0$, there exists a canonical exact sequence of Galois cohomology groups 
\[
0\to H^{\le r}\be(k,\bg_{m,\le k}\lbe)\to H^{\le r}\be(k,\ks[X]^{*})\oplus H^{\le r}\be(k,\ks[Y]^{*})\to H^{\le r}\be(k,\ks[\le X\!\times_{k}\!Y\le]^{*})\to 0.
\]
In particular, by Hilbert's theorem 90, the canonical map
\[
H^{\le 1}\be(k,\ks[X]^{*})\oplus H^{\le 1}\be(k,\ks[Y]^{*})\overset{\!\sim}{\to} H^{\le 1}\be(k,\ks[\le X\!\times_{k}\!Y\le]^{*})
\]
is an isomorphism. Further, if either $X(k)\neq\emptyset$ or $Y(k)\neq\emptyset$, then the above sequence is split exact.
\item[(b)] In general, by Hilbert's Theorem 90 and the exactness of \eqref{krr} for $S=\spec k$ and $r=1$, the canonical map
\[
H^{\le 1}(k,\ks[X]^{*})\oplus H^{\le 1}(k,\ks[Y]^{*})\to H^{\le 1}(k,\ks[\le X\!\times_{k}\!Y\le]^{*})
\]
is injective.
\end{enumerate}
\end{remarks}

\smallskip

We conclude this Section with an additional application of Corollary \ref{ros0}, namely Proposition \ref{tors} below.

\smallskip

If $S$ is a scheme and $G$ is an $S$-group scheme, the \'etale {\it sheaf of characters of $G$} is the \'etale sheaf  $G^{\e *}$ on $S$ defined thus: if $T\to S$ is an \'etale morphism, then
\[
G^{\e *}\be(T\le)=\Hom_{\e T{\text -{\rm gr}}}(G_{\le T},\bg_{m,\e T}).
\]

\begin{lemma}\label{gros} Let $S$ be a reduced scheme and let $G$ be a 
flat $S$-group scheme locally of finite presentation with smooth and connected maximal fibers. Then there exists a canonical isomorphism of \'etale sheaves on $S$ \[
U_{G\lbe/\lbe S}=G^{\e *}.
\] 
\end{lemma}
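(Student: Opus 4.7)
\emph{Plan.} I would identify both sheaves $U_{G\lbe/\lbe S}$ and $G^{\e *}$ with the étale sheaf of ``normalized units''. For any étale morphism $T\to S$, the unit section $\varepsilon\colon S\to G$ base-changes to a section $\varepsilon_{T}\colon T\to G_{T}$ of $f_{T}\colon G_{T}\to T$, and by Lemma \ref{sd-ci} and \eqref{class},
\[
U_{G\lbe/\lbe S}(T)=U_{G_{T}\lbe/\lbe T}(T)=\g(G_{T},\mathcal{O}_{G_{T}})^{*}\lbe/\g(T,\mathcal{O}_{T})^{*}.
\]
Via \eqref{gid} and \eqref{cb}, the right-hand group is identified with $T$-morphisms $u\colon G_{T}\to\bg_{m,T}$ modulo $\bg_{m}(T)$; and since $\varepsilon_{T}^{\le *}$ retracts $f_{T}^{\le *}$, every class has a unique representative $u$ satisfying $u\circ\varepsilon_{T}=1_{T}$. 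Call such a $u$ \emph{normalized}. A character $\chi\in G^{\e *}(T)=\Hom_{T\text{-gr}}(G_{T},\bg_{m,T})$ is automatically normalized, so I would define $\iota\colon G^{\e *}\to U_{G\lbe/\lbe S}$ as the inclusion of characters into normalized units. Injectivity of $\iota(T)$ is immediate: two characters of equal class in $U_{G\lbe/\lbe S}(T)$ differ by some $c\in\bg_{m}(T)$, and evaluation at $\varepsilon_{T}$ forces $c=1$.

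The bulk of the argument is surjectivity, namely the assertion that every normalized unit $u\colon G_{T}\to\bg_{m,T}$ satisfies the character identity
\[
u\circ m_{T}=(u\circ p_{1})\cdot (u\circ p_{2})
\]
on $G_{T}\times_{T}G_{T}$, where $m_{T}$ denotes multiplication. The key input is Corollary \ref{ros0} applied to the pair $(f_{T},f_{T})$ over $T$. I would verify its hypotheses in order: $T$ is reduced as an étale scheme over the reduced $S$; $f_{T}$ is faithfully flat and locally of finite presentation by base change; the maximal geometric fibers of $f_{T}$ remain smooth and connected by Lemma \ref{mf}, and a smooth connected group scheme over a field is geometrically reduced (by smoothness) and geometrically connected (the identity is a rational point, so the Galois action on the connected components of the base change to an algebraic closure fixes the component containing it, forcing that component to descend and hence to equal the whole scheme); and $f_{T}\times_{T}f_{T}\colon G_{T}\times_{T}G_{T}\to T$ has the section $(\varepsilon_{T},\varepsilon_{T})$, which is in particular an étale quasi-section. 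Corollary \ref{ros0} thus yields an isomorphism
\[
U_{G_{T}\lbe/\lbe T}(T)\oplus U_{G_{T}\lbe/\lbe T}(T)\overset{\!\sim}{\to}U_{G_{T}\times_{T}G_{T}\lbe/\lbe T}(T),\qquad (a,b)\mapsto (a\circ p_{1})\cdot (b\circ p_{2}),
\]
whose inverse, using the section $(\varepsilon_{T},\varepsilon_{T})$ for normalization on the right, sends a normalized unit $w$ to the pair of its restrictions along the two ``axes'' $G_{T}\times\{\varepsilon_{T}\}$ and $\{\varepsilon_{T}\}\times G_{T}$.

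I would then apply this with $w=u\circ m_{T}$. It is normalized, since $u\circ m_{T}\circ (\varepsilon_{T},\varepsilon_{T})=u\circ\varepsilon_{T}=1_{T}$, and its two restrictions both equal $u$, because $m_{T}$ restricted to either axis is the identity of $G_{T}$. Hence under the Rosenlicht isomorphism $w$ corresponds to $(u,u)$, which reads back as exactly the character identity displayed above. Thus $u\in G^{\e *}(T)$, so $\iota(T)$ is surjective for every étale $T\to S$, and $\iota$ is the required isomorphism of étale sheaves. The main obstacle I anticipate is careful bookkeeping of the normalizations: Corollary \ref{ros0} produces \emph{a priori} an isomorphism of quotient groups, and one must use the splitting provided by $\varepsilon$ to upgrade equalities modulo $\bg_{m}(T)$ to genuine equalities of morphisms, which is precisely what lets the final restriction computation deliver the character identity on the nose.
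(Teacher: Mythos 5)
Your proof is correct, and it takes a genuinely different route at the one substantive step. Both arguments hinge on the same normalization device (replace a unit $v$ on $G_T$ by $v\cdot(v\circ\varepsilon_T\circ f_T)^{-1}$, so that each class in $U_{G/S}(T)=\g(G_T,\s O_{G_T})^*/\g(T,\s O_T)^*$ has a unique representative trivial along the unit section), and both must then show that a normalized unit is a character. The paper outsources that implication to Raynaud \cite[VII, Corollary 1.2, p.~103]{ray}, whereas you derive it internally from Corollary \ref{ros0} applied to the pair $(f_T,f_T)$: writing $u\circ m_T$ in the (unique) normalized Rosenlicht decomposition $(a\circ p_1)\cdot(b\circ p_2)$ and restricting to the two axes forces $a=b=u$, which is exactly $u\circ m_T=(u\circ p_1)\cdot(u\circ p_2)$. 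This is the classical Rosenlicht argument transported to the relative setting, and your verification of the hypotheses of Corollary \ref{ros0} is complete: faithful flatness of $f$ follows from the existence of the unit section, the maximal geometric fibers are reduced and connected because a smooth connected group scheme over a field is geometrically reduced and (having a rational point) geometrically connected, and $(\varepsilon_T,\varepsilon_T)$ is a section of $f_T\times_T f_T$. What your approach buys is self-containedness — the lemma becomes a formal consequence of the additivity theorem already proved in Section \ref{four}, with no new external input (note that Raynaud's VII 1.2 is itself proved by the same density argument that underlies Theorem \ref{ros1}, so no circularity is introduced). What the paper's citation buys is brevity and the avoidance of the bookkeeping you rightly flag, namely upgrading an identity of classes modulo $\bg_{m}(T)$ to an identity of morphisms via uniqueness of normalized representatives; you handle that point correctly.
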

\begin{proof} Let $T\to S$ be any \'etale morphism. By Lemma \ref{sd-ci} and the equality $G^{\e *}\be(T\le)=(G_{T}\lbe)^{*}(T\le)$, for the purpose of defining a canonical isomorphism of groups $U_{G/S}(T\e)\overset{\!\sim}{\to} G^{\le *}(T\e)$ we may assume that $T=S$. Let $f\colon G\to S$ be the structure morphism of $G$ and let $\varepsilon\colon S\to G$ be its unit section. For every $S$-morphism $v\colon G\to \bg_{m,\le S}$, consider the $S$-morphism $\widetilde{v}=v\lbe\cdot\lbe(v\lbe\circ\lbe\varepsilon\lbe\circ\lbe f\le)^{-1}=
v\lbe\cdot\lbe (f^{\le(0)}\lbe(\varepsilon^{\le (0)}\lbe(v)))^{-1}\colon G\to \bg_{m,\le S}$. Since $\widetilde{v}\be\circ\be\varepsilon\colon S\to \bg_{m,\le S}$ factors through the unit section of $\bg_{m,\le S}$, $\widetilde{v}\in G^{\le *}\be(S\le)$ by \cite[VII, Corollary 1.2, p.~103]{ray}. Thus we obtain a map $(\e f_{\lbe *}\bg_{m,\e G})(S\le)\to G^{\le *}\be(S\le), \e v\mapsto \widetilde{v}$, whose kernel is easily seen to be equal to the image of $f^{\le(0)}\colon\bg_{m,\e S}(S\le)\to (\e f_{\lbe *}\bg_{m,\e G})(S\le)$. Since $\widetilde{\chi}=\chi$ for every $S$-homomorphism $\chi\colon G\to \bg_{m,\le S}$, i.e., $\chi\in G^{\le *}\be(S\le)$, the indicated map is also surjective, whence the lemma follows.
\end{proof}

\begin{proposition}\label{tors} Let $S$ be a reduced scheme, $G$ a 
flat $S$-group scheme locally of finite presentation with smooth and connected maximal fibers and $f\colon X\to S$ a torsor under $G$ over $S$. If $f$ has an \'etale quasi-section, then there exists a canonical isomorphism of \'etale sheaves on $S$
\[
\uxs=G^{\e *}.
\]
\end{proposition}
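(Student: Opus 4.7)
The plan is to apply Lemma \ref{add} to the contravariant functor $F\colon (\textrm{Sch}/S)\to S_{\et}^{\le\sim}$ given by $Y\mapsto U_{Y\lbe/\lbe S}$, applied to the torsor pair $(X, G)$, and then to identify $F(G)=U_{G\lbe/\lbe S}$ with $G^{\e *}$ via Lemma \ref{gros}.

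Condition (i) of Lemma \ref{add} is immediate: $F(S)=U_{S\lbe/\lbe S}=\cok(1_{\bg_{m,\le S}}\lbe)=0$. Condition (ii), together with the existence of the map $\varphi_{X,\e G}$, requires $\psi_{X,\e X}$ and $\psi_{X,\e G}$ \eqref{fxy} to be isomorphisms of \'etale sheaves on $S$, and both will be supplied by Corollary \ref{ros0} applied to the pairs $(X,X)$ and $(X,G)$ respectively. To invoke that corollary I must verify that (a) both $f\colon X\to S$ and $g\colon G\to S$ are faithfully flat, locally of finite presentation, with reduced and connected maximal geometric fibers; and (b) both $f\be\times_{\be S}\be f$ and $f\be\times_{\be S}\be g$ admit an \'etale quasi-section. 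For (a), the unit section $\varepsilon\colon S\to G$ forces $g$ to be surjective, hence faithfully flat; its maximal geometric fibers are smooth and connected by hypothesis, and so in particular reduced and connected. The same properties pass to $X$ via the torsor structure: a $G$-torsor is fppf, and each geometric fiber $X_{\bar s}$ is a smooth $G_{\bar s}$-torsor which therefore possesses a $\bar s$-point trivializing it to $G_{\bar s}$, smooth and connected. For (b), if $h\colon T\to X$ is an \'etale quasi-section of $f$ lying over $\alpha\colon T\to S$, then $(h,h)_{\lbe S}\colon T\to X\be\times_{\be S}\be X$ and $(h,\varepsilon\be\circ\be\alpha)_{\lbe S}\colon T\to X\be\times_{\be S}\be G$ exhibit $\alpha$ as an \'etale quasi-section of $f\be\times_{\be S}\be f$ and of $f\be\times_{\be S}\be g$, respectively.

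The torsor hypothesis on $X$ is precisely the statement that the map $\rho$ of \eqref{ro} is an isomorphism, so all hypotheses of Lemma \ref{add} are met. That lemma therefore yields an isomorphism $\varphi_{X,\e G}\colon U_{X\lbe/\lbe S}\isoto U_{G\lbe/\lbe S}$ of \'etale sheaves on $S$. Composing with the canonical isomorphism $U_{G\lbe/\lbe S}\simeq G^{\e *}$ supplied by Lemma \ref{gros} gives the asserted canonical isomorphism $U_{X\lbe/\lbe S}\simeq G^{\e *}$.

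The main technical point I expect is the verification of (a) and (b) above: the interplay between the torsor structure on $X$, the unit section of $G$, and the \'etale quasi-section of $f$ that simultaneously feeds both the hypotheses of Corollary \ref{ros0} for the pairs $(X,X)$ and $(X,G)$ and the hypothesis on $\psi_{X,\e X}$ in Lemma \ref{add}. Once these verifications are in place, the isomorphism falls out of the abstract machinery with essentially no further computation.
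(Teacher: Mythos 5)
Your proof is correct and follows essentially the same route as the paper: use the unit section and the \'etale quasi-section of $f$ to verify the hypotheses of Corollary \ref{ros0} for the pairs $(X,X)$ and $(X,G)$, then conclude via Lemma \ref{add} and Lemma \ref{gros}. The only (immaterial) difference is that you check reducedness and connectedness of the maximal geometric fibers of $X$ directly by trivializing the torsor over a geometric point, whereas the paper trivializes it over the \'etale quasi-section $T\to S$ and invokes Lemma \ref{mf}.
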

\begin{proof} Let $g\colon G\to S$ be the structural morphism of $G$ and  $\varepsilon \colon S\to G$ its unit section. If $\alpha\colon T\to S$ is an \'etale quasi-section of $f$ and $h\colon T\to X$ is the corresponding $S$-morphism, then $(h,\varepsilon\be\circ\be \alpha\e)_{S}\colon T\to X\be\times_{S}\lbe G$ and $(h,h)_{S}\colon T\to X\be\times_{S}\be X$ are $S$-morphisms, whence $\alpha$ is also a quasi-section of both $f\lbe\times_{\lbe S}\e g$ and $f\lbe\times_{S}\lbe f$. Further, the $S$-isomorphism \eqref{ro} induces an isomorphism of $X_{T}\e$-schemes $\rho_{\e T}\colon X_{T}\be \times_{T}\be G_{\le T}\overset{\!\sim}{\to} X_{T}\be \times_{T}\be X_{T}$. Regarding $T$ as an $X_{T}\e$-scheme via the section $(h,1_{T}\e)_{S}\colon T\to X_{T}$ of $f_{T}$, we obtain a $T$-isomorphism $1_{T}\times_{\be X_{T}}\be\rho_{\e T}\colon G_{T}\overset{\!\sim}{\to}X_{T}$. It now follows from Lemma \ref{mf} that the pairs of morphisms $(\e f,g\e)$ and $(\e f,f\e)$ satisfy all the conditions of Corollary \ref{ros0}. Therefore the canonical morphisms $\uxs\oplus\ugs\to U_{X\times_{S}\e G/S}$ and $\uxs\oplus\uxs\to U_{X\times_{S}X/S}$ are isomorphisms of \'etale sheaves on $S$. The proposition now follows by combining Lemmas \ref{gros} and \ref{add}.
\end{proof}

\section{The Picard group of a product}\label{5}

In this Section we establish, under certain conditions, an additivity theorem for the Picard group of a product of schemes over a normal base. See Theorem \ref{non2}. All schemes below are assumed to be locally noetherian.

\smallskip

We begin with 

\begin{proposition}\label{inj} Let $X$ and $Y$ be schemes over a field $k$ with separable closure $\ks$ such that $(X\times_{k} Y\le)(\ks)\neq\emptyset$. Then the canonical map $\pic X\be\oplus\lbe \pic Y\to \pic\be(X\!\times_{\lbe k}\! Y\le)$ \eqref{pxy1} is injective.
\end{proposition}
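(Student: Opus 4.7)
The plan is to reduce the injectivity to the analogous statement over $k^s$ (where it is immediate from the existence of a section) and then descend to $k$ via Galois cohomology.

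I would first use the identification $(X\times_k Y)(k^s)=X(k^s)\times Y(k^s)$ coming from the universal property of fiber products to fix $k^s$-points $x_0\in X(k^s)$ and $y_0\in Y(k^s)$; these I view as $k^s$-sections $\widetilde x_0\colon\spec k^s\to X^s$ and $\widetilde y_0\colon\spec k^s\to Y^s$ of the structural morphisms, where $X^s:=X\times_k\spec k^s$ and $Y^s:=Y\times_k\spec k^s$. Suppose $(L,M)\in\pic X\oplus\pic Y$ satisfies $p_X^{\le *}L\otimes p_Y^{\le *}M\simeq\mathcal{O}_{X\times_k Y}$. The morphism $\id_{X^s}\times_{k^s}\widetilde y_0\colon X^s\to X^s\times_{k^s} Y^s$ is a section of $p_{X^s}$, and its composition with $p_{Y^s}$ factors through $\spec k^s$. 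Pulling the base-changed isomorphism back along it and invoking $\pic k^s=0$ yields $L_{k^s}\simeq\mathcal{O}_{X^s}$; symmetrically $M_{k^s}\simeq\mathcal{O}_{Y^s}$. Thus $(L,M)$ lies in the kernel of the base-change map.

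Next I would appeal to the Hochschild--Serre spectral sequence for the Galois pro-cover $X^s\to X$, which identifies $\krn[\pic X\to\pic X^s]$ with $H^1(\varGamma,k^s[X]^*)$ for $\varGamma=\mathrm{Gal}(k^s/k)$, and similarly for $Y$ and $X\times_k Y$. By the preceding step, $L$ and $M$ correspond to classes $[\phi_L]\in H^1(\varGamma,k^s[X]^*)$ and $[\phi_M]\in H^1(\varGamma,k^s[Y]^*)$. Comparing the $k$-trivialization of $p_X^{\le *}L\otimes p_Y^{\le *}M$ with the $k^s$-trivializations produced above yields a unit $\mu\in k^s[X\times_k Y]^*$ whose Galois coboundary equals $(p_X^{\le *}\phi_L\cdot p_Y^{\le *}\phi_M)^{-1}$; equivalently, $p_X^{\le *}[\phi_L]+p_Y^{\le *}[\phi_M]=0$ in $H^1(\varGamma,k^s[X\times_k Y]^*)$.

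The hard step will be upgrading this relation to $[\phi_L]=0=[\phi_M]$, i.e., to the injectivity of the pullback map $H^1(\varGamma,k^s[X]^*)\oplus H^1(\varGamma,k^s[Y]^*)\to H^1(\varGamma,k^s[X\times_k Y]^*)$. My approach would be to use the $k^s$-point $\widetilde y_0$, or rather its full $\varGamma$-orbit (to enforce equivariance), to construct a $\varGamma$-equivariant norm-type map $N_Y\colon k^s[X\times_k Y]^*\to k^s[X]^*$ defined by $\eta\mapsto\prod_{\widetilde y\in\varGamma\cdot\widetilde y_0}\eta|_{X^s\times\widetilde y}$. Then $N_Y\circ p_X^{\le *}$ is multiplication by $n:=|\varGamma\cdot\widetilde y_0|$ on $k^s[X]^*$, while $N_Y\circ p_Y^{\le *}$ factors through the constants $(k^s)^*\subset k^s[X]^*$. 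Applying $N_Y$ to the vanishing relation above and killing the $(k^s)^*$-contribution via Hilbert's Theorem 90 ($H^1(\varGamma,(k^s)^*)=0$) forces $n\cdot[\phi_L]=0$; the symmetric construction with $\widetilde x_0$ gives an analogous bound on $[\phi_M]$. Turning these torsion constraints into outright vanishing is the delicate point, and I expect to complete it by the strategy underlying Remark \ref{ppic}(b), combining Hilbert 90 with a careful analysis of the Galois module structure of $k^s[X]^*$ modulo constants.
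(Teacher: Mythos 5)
Your first two steps coincide with the paper's route (the Galois descent argument of Colliot-Th\'el\`ene--Sansuc cited in the proof): trivialize $L$ and $M$ over $\ks$ using the sections through $(x_0,y_0)$, identify $\krn[\e\pic X\to\pic \xs\e]$ with $H^1(\g,\ks[X]^*)$ via Hochschild--Serre, and thereby reduce everything to the injectivity of
\[
H^{1}(\g,\ks[X]^{*})\oplus H^{1}(\g,\ks[Y]^{*})\to H^{1}(\g,\ks[X\times_{k}Y]^{*}).
\]
That injectivity is exactly Remark \ref{ppic}(b), and it carries the entire content of the proposition; your proposal does not establish it. The norm map $N_Y$ built from the $\g$-orbit of $\widetilde y_0$ only yields $n\cdot[\phi_L]=0$, where $n$ is the cardinality of that orbit (which you have not shown to be finite for a general $k$-scheme $Y$, and which equals $1$ only when the point is $k$-rational --- precisely the case where no descent is needed). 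Restriction--corestriction arguments of this kind appear elsewhere in the paper (Propositions \ref{kad} and \ref{rat}) exactly to produce such torsion bounds, and they cannot by themselves be upgraded to outright vanishing; your closing sentence defers the removal of the torsion to ``the strategy underlying Remark \ref{ppic}(b)'', i.e., to the very statement that has to be proved.

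The missing ingredient is the \emph{surjectivity} half of Rosenlicht--Conrad additivity of units: the sequence of $\g$-modules
\[
1\to(\ks)^{*}\to\ks[X]^{*}\oplus\ks[Y]^{*}\to\ks[X\times_{k}Y]^{*}\to 1
\]
is exact --- the kernel computation follows from the points $x_0,y_0$ exactly as in your first step, while the surjectivity is Conrad's theorem \cite[Theorem 1.1]{con}, i.e., the exactness of \eqref{c1} (equivalently, of \eqref{cseq} over $\ks$, which is what feeds into \eqref{krr}). Granting this, the long exact Galois cohomology sequence together with Hilbert's Theorem 90, $H^1(\g,(\ks)^{*})=0$, immediately gives the required injectivity on $H^1$ with no torsion caveat and no norm map. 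Note that invoking Conrad's theorem requires $\xs$ and $\ys$ to be connected, reduced and locally of finite type, so in carrying this out you should make explicit (as Remark \ref{ppic} implicitly does) under which hypotheses on $X$ and $Y$ this input is available.
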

\begin{proof} This can be proved using the Galois descent argument given in \cite[proof of Lemma 11, pp.~188-189]{cts77} using Remark \ref{ppic}(b).
\end{proof}

If $f\colon X\to S$ is a morphism of schemes and $\eta$ is a maximal point of $S$, we will write  $j_{\le\eta}\colon X_{\eta}\to X$ for the base extension along $f$ of the canonical embedding $\eta\to S$. There exists a canonical homomorphism of abelian groups
\begin{equation}\label{pico}
(\le\pic\be\le j_{\le\eta})\colon \pic X\to\prod\pic X_{\eta}\e,\e c\mapsto((\pic\lbe j_{\le\eta})(c))_{\eta}\,,
\end{equation}
where the product runs over the set of maximal points $\eta$ of $S$. The following statement is due to Raynaud.

\begin{proposition}\label{ray1} Let $f\colon X\to S$ be a faithfully flat morphism of locally noetherian schemes which is either quasi-compact or locally of finite type. Assume that $S$ is normal and that, for every point $s$ of $S$ of codimension $1$, the fiber $X_{\lbe s}$ is integral. Then the canonical sequence of abelian groups
\[
\pic S\overset{\!\be\pic\be f}{\lra}\pic X\overset{\!(\pic\lbe j_{\eta}\lbe)}{\lra}\prod\pic X_{\eta}
\]
is exact.
\end{proposition}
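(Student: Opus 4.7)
The inclusion $\img\pic f \subseteq \krn(\pic j_{\eta})$ is automatic because each composite $f\circ j_{\eta}$ factors through $\spec k(\eta)$ and $\pic\spec k(\eta)=0$. For the reverse inclusion, given $L\in\pic X$ with $j_{\eta}^{*}L\simeq\mathcal O_{X_{\eta}}$ for every maximal point $\eta$ of $S$, my plan is to produce $M\in\pic S$ with $f^{*}M\simeq L$.

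First I would exhibit $L$ as $\mathcal O_{X}(D)$ for a Cartier divisor $D$ on $X$ supported off the generic fibers. Each generic fiber $X_{\eta}$ is integral (a special case of the codimension $\leq 1$ integrality hypothesis) with unique generic point $\xi_{\eta}$, which is a maximal point of $X$ by flatness, and $\mathcal O_{X,\xi_{\eta}}=k(X_{\eta})$. Choosing, for each $\eta$, a generator of the one-dimensional $k(X_{\eta})$-vector space $L\otimes k(X_{\eta})$, one obtains a rational section $s$ of $L$ nowhere vanishing in $L\otimes\mathcal R(X)$, where $\mathcal R(X)$ is the sheaf of total quotient rings on $X$; setting $D=\mathrm{div}(s)$ gives $L\simeq\mathcal O_{X}(D)$ with the support of $D$ avoiding every $X_{\eta}$. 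The flat dimension formula $\dim\mathcal O_{X,\zeta}=\dim\mathcal O_{S,f(\zeta)}+\dim\mathcal O_{X_{f(\zeta)},\zeta}$, applied at the generic point $\zeta$ of a prime component of $D$, combined with $\dim\mathcal O_{X,\zeta}=1$ and the fact that $f(\zeta)$ is not maximal, forces $f(\zeta)$ to be a codimension $1$ point $s_{j}$ of $S$ and $\zeta$ to be the generic point of $X_{s_{j}}$; the codimension $\leq 1$ integrality hypothesis then identifies that prime component with the entire fiber $X_{s_{j}}$. Hence $D=\sum_{j}n_{j}X_{s_{j}}$ with $\{s_{j}\}$ a locally finite set of codimension $1$ points of $S$.

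The final step is to descend $D$ to a Cartier divisor $E$ on $S$: then $M=\mathcal O_{S}(E)$ satisfies $f^{*}M\simeq \mathcal O_{X}(D)\simeq L$. Normality of $S$ endows each $\mathcal O_{S,s_{j}}$ with a uniformizer $\pi_{j}$, and flatness of $f$ together with the integrality of $X_{s_{j}}$ shows that $f^{*}\pi_{j}$ generates the maximal ideal of $\mathcal O_{X,\zeta_{j}}$, so $n_{j}X_{s_{j}}=\mathrm{div}(f^{*}\pi_{j}^{\le n_{j}})$ near each fiber $X_{s_{j}}$. I expect the main obstacle to be the global patching of these local presentations: because $S$ is merely normal, not necessarily locally factorial, an individual Weil divisor $[\overline{\{s_{j}\}}]$ need not be Cartier, so one cannot descend summand by summand. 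What saves the argument is that the particular combination $\sum_{j}n_{j}[\overline{\{s_{j}\}}]$ must itself be Cartier, because its pullback equals the Cartier divisor $D$ on $X$ and the pullback map $\mathcal K_{S}^{*}/\mathcal O_{S}^{*}\to f_{*}(\mathcal K_{X}^{*}/\mathcal O_{X}^{*})$ is injective by faithful flatness of $f$; the compatible local identifications $n_{j}X_{s_{j}}=\mathrm{div}(f^{*}\pi_{j}^{\le n_{j}})$ then provide the descent data needed to glue these into a global Cartier divisor $E$ on $S$ with $f^{*}E=D$.
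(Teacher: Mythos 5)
First, a point of reference: the paper does not actually prove this proposition --- it cites \cite[${\rm Err}_{\rm IV}$, Corollary 21.4.13]{ega} --- so your argument is really to be measured against Raynaud's/EGA's proof, whose overall shape (trivialize $L$ on the generic fibers, take the divisor $D$ of the resulting rational section, recognize $D$ as a combination of fibers over codimension-$1$ points of $S$, descend) you have correctly reproduced. Two steps, however, are genuinely incomplete. The first is the claim that ${\rm Supp}(D)$ avoids every $X_\eta$. An \emph{arbitrary} generator of the one-dimensional space $L\otimes k(X_\eta)$ only guarantees that $D$ avoids the generic point $\xi_\eta$ of $X_\eta$, not the whole fiber: for $X=\mathbb{A}^1_k$ over $S=\spec k$, $L=\mathcal O_X$ and the generator $t$ of $k(t)$, one gets $D=[\le 0\le]\subseteq X_\eta=X$. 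You must use the hypothesis $j_\eta^*L\simeq\mathcal O_{X_\eta}$ --- which otherwise never enters your argument --- to take the generator to be the germ at $\xi_\eta$ of a \emph{global} trivializing section of $j_\eta^*L$; since $\mathcal O_{S,\eta}=k(\eta)$ ($S$ being reduced), one has $\mathcal O_{X_\eta,x}=\mathcal O_{X,x}$ for every $x\in X_\eta$, and then $s$ generates $L_x$ at every such $x$. As written, your later appeal to ``the fact that $f(\zeta)$ is not maximal'' is circular, because that is exactly what this step has to establish.

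Second, the descent of $D$ to $S$ is a plan rather than a proof, and the mechanism you propose does not suffice: injectivity of $\mathcal K_S^*/\mathcal O_S^*\to f_*(\mathcal K_X^*/\mathcal O_X^*)$ gives \emph{uniqueness} of a Cartier divisor $E$ with $f^*E=D$, whereas what is needed is \emph{existence}. Concretely, at a point $t\in S$ of codimension $\geq 2$, where several $\overline{\{s_j\}}$ may meet and $\mathcal O_{S,t}$ is normal but need not be factorial, one must show that a local equation $g_x$ of $D$ at some $x\in X_t$ can be written as a unit of $\mathcal O_{X,x}$ times an element of $\mathcal K_{S,t}^*$; equivalently, that the fractional ideal $I=\{a\in\mathcal K_{S,t}: a\in g_x\mathcal O_{X,x}\}$ of $\mathcal O_{S,t}$ satisfies $I\cdot\mathcal O_{X,x}=g_x\mathcal O_{X,x}$, after which $I$ is principal by faithfully flat descent of freeness. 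That equality is precisely the content of the local lemma \cite[${\rm IV}$, 21.4.11]{ega}, uses the integrality of the codimension-$\leq 1$ fibers again, and is not a formal consequence of injectivity. A related gap: your identity $D=\sum_j n_j\e\overline{X_{s_j}}$ is an equality of codimension-$1$ cycles, but a Cartier divisor on $X$ need not be determined by its codimension-$1$ part unless $X$ is $(S_2)$, which the hypotheses do not guarantee over points of $S$ of codimension $\geq 2$; so even once $E$ is known to be Cartier you still owe the identity $f^{\le*}E=D$ of Cartier divisors (or directly $f^{\le*}\mathcal O_S(E)\simeq L$). Two smaller remarks: the local finiteness of $\{s_j\}$ in $S$ deserves a word (it follows from openness of $f$ in the locally-of-finite-type case and from quasi-compactness in the other), and you are right to read the hypothesis as ``codimension $\leq 1$'' --- integrality of the generic fibers is indispensable for constructing $s$ and is part of EGA's statement --- even though the proposition as printed says ``codimension $1$''.
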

\begin{proof} See \cite[${\rm Err}_{\e\rm IV}$, 53, Corollary 21.4.13, p.~361]{ega}.
\end{proof}

Recall that a (locally noetherian) scheme $X$ is called {\it locally factorial} if, for every $x\in X$, the local ring $\s O_{\be X,\le x}$ is factorial. The following implications hold for locally noetherian schemes: regular $\implies$ locally factorial $\implies$ normal.

\begin{corollary}\label{ray2} Let the notation and hypotheses be those of the proposition. Then the map \eqref{pico} induces an injection $\npic\be(X\be/\be S\le)\hookrightarrow \prod\pic X_{\eta}$, where $\npic\be(X\be/\be S\le)$ is the group \eqref{npic}. If, in addition, $X$ is locally factorial, then the preceding map is an isomorphism, i.e., $\npic\be(X\be/\be S\le)\overset{\!\sim}{\to}\prod\pic X_{\eta}$.
\end{corollary}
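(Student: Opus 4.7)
For injectivity, Proposition \ref{ray1} identifies the image of $\pic f$ with the kernel of the map $(\pic\be j_{\eta})\colon \pic X \to \prod_{\eta}\pic X_{\eta}$, so passing to the quotient by $\img(\pic f)$ yields the required injection $\npic\be(X\be/\be S)\hookrightarrow \prod_{\eta}\pic X_{\eta}$. This step is immediate and requires no further input.

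For surjectivity under the assumption that $X$ is locally factorial, my plan consists of three steps. First, I would spread each $c_{\eta}\in\pic X_{\eta}$ out to a neighboring open of $X_{\eta}$ in $X$. Since $S$ is normal, $\s O_{\!\be S,\le\eta}=k(\eta)$, so $\eta$ is the cofiltered limit of affine open neighborhoods $W$ of $\eta$ in $S$, whence $X_{\eta}=\varprojlim_{W} X_{W}$. The standard limit theorems for invertible sheaves on schemes (EGA IV$_{3}$, \S 8.5) then give $\pic X_{\eta}=\varinjlim_{W}\pic X_{W}$, so there exist an affine open $W_{\eta}\ni\eta$ in $S$ and a class $\tilde c_{\eta}\in\pic X_{W_{\eta}}$ mapping to $c_{\eta}$. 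Second, I would assemble the $\tilde c_{\eta}$ into a class on a single open. Since $S$ is locally noetherian, each $W_{\eta}$ may be shrunk so as to lie in the irreducible component of $S$ with generic point $\eta$ and to avoid the closures of the other maximal points; this makes the $W_{\eta}$'s pairwise disjoint. Setting $W:=\bigsqcup_{\eta}W_{\eta}$, which is open in $S$, we have $X_{W}=\bigsqcup_{\eta}X_{W_{\eta}}$ and $\pic X_{W}=\prod_{\eta}\pic X_{W_{\eta}}$, into which the tuple $(\tilde c_{\eta})_{\eta}$ fits.

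Third, I would extend this class from $X_{W}$ to $X$ using local factoriality. On a locally factorial locally noetherian scheme, $\pic$ coincides with the Weil divisor class group, and for any open subscheme $U\subset X$ the restriction $\mathrm{Cl}(X)\to\mathrm{Cl}(U)$ is surjective because every prime Weil divisor on $U$ is the intersection with $U$ of its Zariski closure in $X$. Applied to $U=X_{W}$, this yields $c\in\pic X$ restricting to $(\tilde c_{\eta})_{\eta}\in\pic X_{W}$, and therefore to $(c_{\eta})_{\eta}\in\prod_{\eta}\pic X_{\eta}$. I expect the main technical point to be the identification $\pic X_{\eta}=\varinjlim_{W}\pic X_{W}$ in step one, which requires invoking the limit theorems in the appropriate form; the remaining steps are largely divisor-theoretic bookkeeping.
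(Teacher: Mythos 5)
Your injectivity argument coincides with the paper's: by Proposition \ref{ray1} the image of $\pic f$ equals the kernel of \eqref{pico}, so the induced map out of $\npic(X/S)$ is injective. For surjectivity you take a genuinely different route. The paper first reduces to $S$ irreducible (legitimate because a normal locally noetherian scheme is the disjoint union of its irreducible components, so \eqref{pico} is the product of the maps attached to the components --- this plays the role of your disjointification of the $W_\eta$), then identifies $\pic X\to\pic X_\eta$ with ${\frak{Cl}}\,X\to{\frak{Cl}}\,X_\eta$ using local factoriality, and extends a prime divisor $D_\eta\subset X_\eta$ directly to its schematic closure in $X$, which is again closed, irreducible and of codimension $1$. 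Your steps 2 and 3 are sound.

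The gap is in step 1. The limit theorem $\pic X_\eta=\varinjlim_W\pic X_W$ requires the schemes $X_W$ to be quasi-compact and quasi-separated and the transition morphisms to be affine (the latter you can arrange by restricting to the standard opens of one fixed affine neighbourhood of $\eta$). Quasi-separatedness is automatic since $X$ is locally noetherian, but quasi-compactness of $X_W$ is guaranteed only when $f$ itself is quasi-compact, whereas Proposition \ref{ray1} --- and hence the corollary --- also allows $f$ to be merely locally of finite type; that is exactly the case needed later in Proposition \ref{opo} and Theorem \ref{non2}. For a non-quasi-compact $X_W$ the descent of an invertible sheaf from $X_\eta$ to a finite stage can genuinely fail (for instance when $X$ has infinitely many connected components, since a filtered colimit need not commute with the resulting infinite product of Picard groups). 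The repair is to spread out Weil divisors rather than line bundles: by local factoriality $\pic={\frak{Cl}}$ on $X$, on $X_W$ and on $X_\eta$, and a prime divisor of $X_\eta$ extends to one of $X$ by taking its schematic closure, which requires no quasi-compactness of $X$ (only quasi-compactness of $D_\eta\to X$, which holds because $\eta\to S$ is quasi-compact). Once you argue this way the intermediate opens $W_\eta$ become unnecessary and your proof collapses to the paper's.
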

\begin{proof} This is immediate from the proposition except for the last assertion, i.e., the surjectivity of the map \eqref{pico} when $X$ is locally factorial. If $\{S_{\alpha}\}$ is the family of irreducible components of $S$, then \eqref{pico} is the product of the maps $\pic\be(X\!\times_{S}\! S_{\alpha})\to \pic X_{\eta(\alpha)}$, where $\eta(\alpha)$ denotes the generic point of $S_{\alpha}$. Thus we may assume that $S$ is irreducible with generic point $\eta$. 
By \cite[${\rm IV}_{\lbe 4}$, Corollary 21.6.10(ii)]{ega}, the map $\pic X\to\pic X_{\eta}$ can be identified with the map of divisor class groups ${\frak{Cl}}\, X\to {\frak{Cl}}\, X_{\eta}$. Thus it suffices to check that every closed and irreducible subscheme $D_{\eta}$ of codimension 1 in $X_{\eta}$ extends to a closed and irreducible subscheme $D$ of codimension 1 in $X$. Since $\eta\to S$ is quasi-compact, the canonical morphism $D_{\eta}\to X$ is quasi-compact as well and the schematic closure $D$ of $D_{\eta}$ in $X$ is defined \cite[Corollary 6.10.6, p.~325]{ega1}. Since $D$ is closed and irreducible of codimension 1 in $X$, the proof is complete.
\end{proof}

\begin{proposition}\label{opo} Let $S$ be a locally noetherian normal scheme and let $f\colon X\to S$ and $g\colon Y\to S$ be faithfully flat morphisms locally of finite type. Assume that, for every point $s\in S$ of codimension $\leq 1$, the fibers $X_{\lbe s}$ and $Y_{\lbe s}$ are geometrically integral. Then the canonical map 
\[
\npic\be(X\be/\be S\le)\oplus\npic\be(Y\be/\be S\le)\to\npic\be(X\!\times_{\be S}\be Y\be/\be S\le)
\]
\eqref{pxy1b} is injective. If, in addition, $X$, $Y$ and $X\times_{S}Y$ are locally factorial, then there exists a canonical isomorphism
\[
\frac{\npic\be(X\!\times_{\be S}\be Y\be/\be S\le)}{\npic\be(X\be/\be S\le)\oplus\npic\be(Y\be/\be S\le)}\simeq
\prod\frac{\pic\be(X_{\eta}\be\times_{k(\eta)}\be Y_{\eta}\le)}{\pic X_{\eta}\lbe\oplus\lbe \pic Y_{\eta}}.
\]
\end{proposition}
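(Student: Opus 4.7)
The plan is to compare the map $\overline{p}_{XY}^{\,1}$ with the corresponding map at each generic point of $S$ via Corollary \ref{ray2}. First I would check that the hypotheses on $(f,g)$ are inherited by $f\!\times_{\be S}\be g\colon X\!\times_{\be S}\be Y\to S$: it is faithfully flat and locally of finite type by composition and base change, and for every point $s\in S$ of codimension $\leq 1$ the fiber $(X\!\times_{\be S}\be Y)_{\lbe s}=X_{\lbe s}\!\times_{k(s)}\!Y_{\lbe s}$ is geometrically integral, since the product of two geometrically integral $k(s)$-schemes is geometrically integral. Therefore Corollary \ref{ray2} applies to each of $f$, $g$ and $f\!\times_{\be S}\be g$, yielding canonical injections $\iota_{X}\colon\npic\be(X\be/\be S\le)\hookrightarrow\prod_{\eta}\pic X_{\eta}$ and analogously $\iota_{Y}$ and $\iota_{X\times_{\lbe S}Y}$.

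Next I would assemble the commutative square
\[
\xymatrix@C=2.5em{
\npic\be(X\be/\be S\le)\oplus\npic\be(Y\be/\be S\le)\ar[r]^(.57){\overline{p}_{XY}^{\,1}}\ar@{^{(}->}[d]_{\iota_{X}\oplus\iota_{Y}} & \npic\be(X\!\times_{\be S}\be Y\be/\be S\le)\ar@{^{(}->}[d]^{\iota_{X\times_{\lbe S}Y}} \\
\prod_{\eta}\be\left(\pic X_{\eta}\oplus\pic Y_{\eta}\right)\ar[r]_(.53){\prod_{\eta}\alpha_{\eta}} & \prod_{\eta}\pic\be(X_{\eta}\be\times_{k(\eta)}\be Y_{\eta}),
}
\]
where $\alpha_{\eta}$ is the map of \eqref{pxy1} for the fiber over $\eta$ and I have used the natural identification $\prod_{\eta}\pic X_{\eta}\oplus\prod_{\eta}\pic Y_{\eta}=\prod_{\eta}(\pic X_{\eta}\oplus\pic Y_{\eta})$. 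Commutativity follows from the functoriality of pullback applied to the cartesian squares relating the projections from $X\!\times_{\be S}\be Y$ and $X_{\eta}\!\times_{k(\eta)}\! Y_{\eta}$ to their respective factors. Since each of $X_{\eta}$ and $Y_{\eta}$ is geometrically integral and locally of finite type over $k(\eta)$, both admit $k(\eta)^{\rm s}$-points, and hence so does $X_{\eta}\!\times_{k(\eta)}\! Y_{\eta}$; Proposition \ref{inj} therefore shows that each $\alpha_{\eta}$ is injective, and a straightforward diagram chase through the square then delivers the injectivity of $\overline{p}_{XY}^{\,1}$, proving the first assertion.

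For the second assertion, the local factoriality of $X$, $Y$ and $X\!\times_{\be S}\be Y$ upgrades the two vertical arrows of the square to isomorphisms by the second half of Corollary \ref{ray2}. Consequently $\cok\overline{p}_{XY}^{\,1}\simeq\cok\be\bigl(\prod_{\eta}\alpha_{\eta}\bigr)$, and since products are exact in $\mathbf{Ab}$, the latter equals
\[
\prod_{\eta}\cok\alpha_{\eta}=\prod_{\eta}\frac{\pic\be(X_{\eta}\be\times_{k(\eta)}\be Y_{\eta}\le)}{\pic X_{\eta}\oplus\pic Y_{\eta}},
\]
which is the desired formula. I do not anticipate any serious obstacle; the only steps requiring a little care are the verification of the codimension-one fiber hypothesis for $X\!\times_{\be S}\be Y$ and the commutativity of the central diagram, both of which reduce to routine applications of base change and functoriality of pullback.
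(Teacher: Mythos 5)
Your proof is correct and follows essentially the same route as the paper's: both compare $\overline{p}_{XY}^{\,1}$ with the fiberwise maps over the maximal points of $S$ via the injections of Corollary \ref{ray2}, use Proposition \ref{inj} (after checking the codimension-one fibers of $X\!\times_{\be S}\be Y$ are geometrically integral) for injectivity, and upgrade the comparison maps to isomorphisms via local factoriality to identify the cokernels. No gaps.
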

\begin{proof} There exists a canonical commutative diagram of abelian groups
\[
\xymatrix{0\ar[r] &\npic\be(X\be/\be S\le)\oplus\npic\be(Y\be/\be S\le)\ar[r]\ar[d]&\prod\left(\pic X_{\eta}\be\oplus\lbe \pic Y_{\eta}\right)\ar@{^{(}->}[d]\\
0\ar[r]&\npic\be(X\!\times_{\be S}\be Y\be/\be S\le)\ar[r]&\prod\pic\be(X_{\eta}\!\times_{\eta}\! Y_{\eta}\le)},
\]
where the products run over the set of maximal points $\eta$ of $S$, the top row is exact by Corollary \ref{ray2} and the right-hand vertical map is injective by Proposition \ref{inj}. If $s\in S$ is a point of codimension 1, then $(X\be\times_{\be S}\lbe Y\le)_{s}=X_{\lbe s}\be\times_{\lbe k(s)}\lbe Y_{\lbe s}$ is geometrically integral by
\cite[${\rm IV}_{2}$, Proposition 4.6.5(ii)]{ega}. Thus the bottom row in the above diagram is exact as well by Corollary \ref{ray2}, which yields the first assertion of the proposition. If $X$, $Y$ and $X\times_{S}Y$ are locally factorial, then Corollary \ref{ray2} shows that both nontrivial horizontal maps in the above diagram are isomorphisms, whence the second assertion of the proposition is clear.
\end{proof}

The next corollary describes the kernel of the map \eqref{pxy1} under certain conditions.

\begin{corollary}\label{non}  Let $S$ be a locally noetherian normal scheme and let $f\colon X\to S$ and $g\colon Y\to S$ be faithfully flat morphisms locally of finite type. Assume that, for every point $s\in S$ of codimension $\leq 1$, the fibers $X_{\lbe s}$ and $Y_{\lbe s}$ are geometrically integral. Assume, furthermore, that $I\lbe(\le f\!\times_{\be S}\! g\le)$ is defined and is equal to $1$. Then there exists a canonical exact sequence of abelian groups
\[
0\to\pic S\overset{\!\beta^{1}}{\lra}\pic X\be\oplus\be\pic Y\overset{\!p_{\lbe XY}^{1}}{\lra} \pic\be(X\!\times_{\be S}\! Y\e),
\]
where the maps $\beta^{1}$ and $p_{\lbe XY}^{1}$ are given by \eqref{beta} and \eqref{pxy}, respectively.
\end{corollary}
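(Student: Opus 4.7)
The proof should be essentially immediate from the two preceding results in the paper. My plan is to combine Proposition \ref{opo} with Corollary \ref{cop0}, which is set up precisely for this kind of assembly.

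First I would note that the hypotheses of Proposition \ref{opo} are exactly the first half of the hypotheses of Corollary \ref{non} (namely $S$ locally noetherian normal, $f,g$ faithfully flat locally of finite type, and the fiber condition at points of codimension $\leq 1$). Applying that proposition produces the injectivity of the canonical map
\[
\npic(X/S) \oplus \npic(Y/S) \to \npic(X \times_S Y/S),
\]
which is the map $\overline{p}_{XY}^{\,1}$ of \eqref{pxy1b}.

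Next I would invoke Corollary \ref{cop0}, whose two hypotheses are precisely (a) $I(f \times_S g)$ defined and equal to $1$ (given in Corollary \ref{non}) and (b) the injectivity of $\overline{p}_{XY}^{\,1}$ (just obtained). Its conclusion is exactly the exact sequence
\[
0 \to \pic S \to \pic X \oplus \pic Y \to \pic(X \times_S Y),
\]
and by construction the first map is $\beta^{1}$ of \eqref{beta} and the second is $p_{XY}^{1}$ of \eqref{pxy}, matching the statement to be proved.

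There is no real obstacle: the substantive content has been established in Section \ref{3} (the functorial/cohomological machinery that makes Corollary \ref{cop0} work, ultimately resting on the triviality of $\krn f^{(r)}\cap\krn g^{(r)}$ from Proposition \ref{kad}) and in Proposition \ref{opo} (the geometric input, where the key ingredient is Raynaud's Proposition \ref{ray1}/Corollary \ref{ray2} describing $\npic(X/S)$ in terms of the Picard groups of the generic fibers, combined with Proposition \ref{inj} applied fibrewise to control the kernel over each $\eta$). The only thing worth double-checking in writing up the proof is that the fiberwise geometric integrality hypothesis transfers to the product $X \times_S Y$ at codimension-$1$ points, but this was already recorded inside the proof of Proposition \ref{opo} via \cite[${\rm IV}_{2}$, Proposition 4.6.5(ii)]{ega}, so nothing further is needed here.
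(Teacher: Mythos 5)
Your proposal is correct and is exactly the paper's own argument: the paper's proof of Corollary \ref{non} reads simply ``This follows by combining the proposition and Corollary \ref{cop0}.'', i.e.\ Proposition \ref{opo} supplies the injectivity of $\overline{p}_{XY}^{\e 1}$ and Corollary \ref{cop0} then yields the exact sequence. Your identification of the maps $\beta^{1}$ and $p_{XY}^{1}$ and your remark about the geometric integrality of the product fibers at codimension-$1$ points match the paper's reasoning.
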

\begin{proof} This follows by combining the proposition and Corollary \ref{cop0}.	
\end{proof}

The following statement generalizes a theorem of Ischebeck \cite[Theorem 1.7]{isch}, who obtained the conclusion below when the field $k$ is algebraically closed.

\begin{theorem}\label{isc} Let $k$ be a field and let $X$ and $Y$ be normal and geometrically integral $k$-schemes locally of finite type. Then there exists a canonical exact sequence of abelian groups
\[
0\to\pic X\!\oplus\be \pic Y\to \pic\be(X\!\times_{k}\!Y\le)\to\pic\be(\le k\lle(\lbe X)\be\otimes_{\e k}\be k\lle(\le Y\le)),
\]
where $k(X)$ and $k(\le Y\le)$ are the function fields of $X$ and $Y$, respectively.
\end{theorem}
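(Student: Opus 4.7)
The plan is to combine Proposition \ref{inj} (for injectivity of $\pic X\oplus\pic Y\to\pic(X\times_k Y)$) with two applications of Raynaud's exact sequence (Proposition \ref{ray1}) for exactness at $\pic(X\times_k Y)$.

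For injectivity, since $X$ and $Y$ are geometrically reduced and locally of finite type over $k$, each admits a $\ks$-point, hence so does $X\times_k Y$, and Proposition \ref{inj} applies directly.

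For exactness in the middle, let $L\in\pic(X\times_k Y)$ lie in the kernel of the restriction map to $\pic(k(X)\otimes_k k(Y))$. Writing $\spec(k(X)\otimes_k k(Y))$ as the filtered inverse limit of the subschemes $U\times_k V\subset X\times_k Y$ as nonempty opens $U\subset X$ and $V\subset Y$ shrink to their generic points, I pick $U_0\subset X$ and $V_0\subset Y$ with $L|_{U_0\times_k V_0}$ trivial. I then apply Proposition \ref{ray1} to the projection $X\times_k V_0\to X$: this morphism is faithfully flat (base change of $V_0\to\spec k$) and locally of finite type, $X$ is normal, and for each codimension-$1$ point $s\in X$ the fiber $V_{0,k(s)}$ is a nonempty open subscheme of the integral scheme $Y_{k(s)}$ (integral by geometric integrality of $Y$). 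Since the pullback of $L|_{X\times_k V_0}$ to the generic fiber $V_{0,k(X)}$ factors through $L|_{U_0\times_k V_0}$, it is trivial, and Raynaud's sequence produces $N_X\in\pic X$ with $L|_{X\times_k V_0}\simeq p_X^{*}N_X|_{X\times_k V_0}$. Setting $M=L\otimes p_X^{*}N_X^{-1}\in\pic(X\times_k Y)$, I have $M|_{X\times_k V_0}$ trivial.

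A second application of Proposition \ref{ray1}, now to $p_Y\colon X\times_k Y\to Y$, completes the proof: this map is faithfully flat and locally of finite type, $Y$ is normal, and the fibers $X_{k(s)}$ over codimension-$1$ points $s\in Y$ are integral by geometric integrality of $X$. Since $M|_{X_{k(Y)}}$ is obtained by pullback from $M|_{X\times_k V_0}$, it is trivial, so Raynaud furnishes $N_Y\in\pic Y$ with $M\simeq p_Y^{*}N_Y$, and hence $L\simeq p_X^{*}N_X\otimes p_Y^{*}N_Y$, as desired. The main obstacle is the verification of Raynaud's integrality hypothesis on codimension-$1$ fibers in both applications; this is precisely where geometric integrality of $X$ and $Y$ (as opposed to mere integrality) is needed, because the fibers $X_{k(s)}$ and $V_{0,k(s)}$ can fail to be integral over imperfect residue fields otherwise. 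Once these integrality checks are in hand, the result is a clean concatenation of the two Raynaud sequences, pivoted on the choice of opens $U_0,V_0$ trivializing $L$ on a product.
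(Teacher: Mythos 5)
Your argument is correct and rests on the same two ingredients as the paper's proof: Proposition \ref{inj} for injectivity and two applications of Raynaud's Proposition \ref{ray1} (one for each projection), with exactly the same geometric-integrality verifications on the codimension-one fibers. The only difference is organizational: the paper chains the two Raynaud sequences through the intermediate scheme $\spec k(X)\times_{k}Y$ (which is quasi-compact over $Y$, hence still within the scope of Proposition \ref{ray1}) and a small diagram chase, whereas you first spread the generic triviality out to some $U_{0}\times_{k}V_{0}$ by a limit argument (for which you should take $U$ and $V$ affine so that the transition morphisms are affine) and then apply Raynaud to $X\times_{k}V_{0}\to X$ and to $X\times_{k}Y\to Y$; both routes work.
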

\begin{proof} By Proposition \ref{inj}, we need only check exactness at $\pic\be(X\!\times_{\lbe k}\be Y\le)$. Since $Y_{k(x)}$ is integral for every point $x\in X$ of codimension 1, Proposition \ref{ray1} yields an exact sequence of abelian groups
\[
\pic X\to\pic\be(X\!\times_{k}\!Y\le)\to\pic\be(\e \spec k\lbe(\lbe X)\times_{k}Y\e).
\]
On the other hand, since $k(\be X)/k$ is separable and primary \cite[${\rm IV}_{2}$, Corollary 4.6.3]{ega}, $k(X)\otimes_{k}k(y)$ is integral for every point $y\in Y$ of codimension 1
\cite[${\rm IV}_{2}$, Propositions 4.3.2 and 4.3.5]{ega}. Thus Proposition \ref{ray1} yields an exact sequence of abelian groups
\[
\pic Y\to\pic\be(\e \spec k\lbe(\lbe X)\times_{k}Y\e)\to\pic\be(\le k(\lbe X)\be\otimes_{\e k}\be k(Y\le)).
\]
The exactness of the sequence of the theorem at $\pic\be(X\be\times_{\le k}\be Y\le)$ now follows as in \cite[proof of Theorem 1.7, p.~144]{isch}, i.e., by considering the exact and commutative diagram of abelian groups
\[
\xymatrix{&&\pic Y\ar[dl]\ar[d]\\
\pic X\ar[r]&\pic\be(X\!\times_{k}\!Y\le)\ar[r]\ar[dr]&\pic\be(\e \spec k\lbe(\lbe X)\times_{k}Y\e)\ar[d]\\
&& \pic\be(\le k\lbe(\lbe X)\be\otimes_{\e k}\be k\lbe(Y))
}
\]
whose row (respectively, colummn) is the first (respectively, second) exact sequence above.
\end{proof}

Recall that, if $k^{\e\prime}\be/k$ is a field extension, a geometrically integral $k$-scheme $X$ is called {\it $k^{\e\prime}$-rational} if the function field of $X\be\times_{k}\be\spec k^{\e\prime}$ is a purely transcendental extension of $k^{\e\prime}$. The following corollary of the theorem generalizes 
\cite[Lemma 11, p.~188]{cts77} (mainly by replacing the smoothness hypothesis in [loc.cit.] by a normality hypothesis).

\begin{corollary} \label{isc1} Let $k$ be a field and let $X$ and $Y$ be normal and geometrically integral $k$-schemes locally of finite type. Assume that either $X$ or $Y$ is $k$-rational. Then the canonical map $\pic X\oplus\le \pic Y\to\pic\be(X\be\times_{k}\be Y\le)$ is an isomorphism.
\end{corollary}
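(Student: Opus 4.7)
The plan is to deduce the corollary directly from Theorem \ref{isc}, which yields the canonical exact sequence
\[
0\to\pic X\!\oplus\be \pic Y\to \pic\be(X\!\times_{k}\!Y\le)\to\pic\be(\le k\lle(\lbe X)\be\otimes_{\e k}\be k\lle(\le Y\le)).
\]
The map $\pic X\oplus \pic Y\to \pic(X\be\times_{k}\be Y)$ is thus already known to be injective, so the problem reduces to establishing its surjectivity, for which it suffices to show that under the rationality hypothesis we have $\pic\be(\le k\lle(\lbe X)\be\otimes_{\e k}\be k\lle(\le Y\le))=0$.

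Without loss of generality I will assume $X$ is $k$-rational, so that there is an isomorphism $k(X)\simeq k(t_{1},\dots,t_{n})$ for some $n\ge 0$. Writing $A=k[t_{1},\dots,t_{n}]$ and $S=A\setminus\{0\}$, we have $k(X)=S^{-1}\be A$, whence
\[
k(X)\otimes_{k}k(Y)\;=\;S^{-1}\be(A\otimes_{k}k(Y))\;=\;S^{-1}\be k(Y)[t_{1},\dots,t_{n}].
\]
The idea is then to observe that this is a localization of the polynomial ring $R=k(Y)[t_{1},\dots,t_{n}]$ over the field $k(Y)$, and that $\pic R=0$ since $R$ is a UFD (indeed a polynomial ring over a field).

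To conclude, I would invoke the standard fact that any localization of a UFD is again a UFD: height-one primes of $S^{-1}\be R$ correspond to height-one primes of $R$ disjoint from $S$, and those remain principal after localization. Hence $\pic\be(S^{-1}\be R)=0$. Equivalently, one may argue that $\spec S^{-1}\be R$ is a filtered intersection of affine open subschemes of $\spec R$, and that for the regular ring $R$ the pullback $\pic R\to\pic U$ is surjective on every such open $U$ (e.g.\ via the exact sequence relating divisor class groups under shrinking), so passing to the filtered colimit gives $\pic\be(S^{-1}\be R)=0$ as well. Plugging $\pic\be(\le k\lle(\lbe X)\be\otimes_{\e k}\be k\lle(\le Y\le))=0$ into the exact sequence of Theorem \ref{isc} finishes the proof. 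No step here is an essential obstacle; the argument is merely an assembly of Theorem \ref{isc} with the triviality of the Picard group of a localization of a polynomial ring over a field.
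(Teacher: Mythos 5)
Your proposal is correct and follows essentially the same route as the paper: reduce to showing $\pic(k(X)\otimes_k k(Y))=0$ via Theorem \ref{isc}, identify $k(X)\otimes_k k(Y)$ as a ring of fractions of a polynomial ring over the field $k(Y)$, and conclude it is factorial so its Picard group vanishes. The only difference is cosmetic — the paper cites Bourbaki for the factoriality of a localization of a polynomial ring, while you prove that fact directly.
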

\begin{proof} The rationality hypothesis implies that $k\lbe(\lbe X)\be\otimes_{\e k}\be k\lbe(Y)$ is a ring of fractions of a polynomial ring over a field. Thus, by \cite[VII, \S3, 4 and 5]{bou}, $k\lbe(\lbe X)\be\otimes_{\e k}\be k\lbe(Y)$ is a factorial domain, whence $\pic\be(k\lbe(\lbe X)\be\otimes_{\e k}\be k\lbe(Y))=0$. The corollary is now immediate from the theorem.
\end{proof}

The next proposition generalizes \cite[Lemma 6.6(i), p.~40]{san}.

\begin{proposition}\label{rat} Let $k$ be a field with separable closure $\ks$ and corresponding absolute Galois group $\g={\rm Gal}(\ks\be/k)$ and let $X$ and $Y$ be geometrically connected and geometrically reduced $k$-schemes locally of finite type. Assume that 
\begin{enumerate}
\item[(i)] the canonical map $(\e\pic\xs)^{\g}\be\oplus\be(\e\pic\ys)^{\g}\!\to \pic\be(\xs\!\times_{k^{\e\rm s}}\!\ys\le)^{\g}$ is an isomorphism, and 
\item[(ii)] ${\rm gcd}\le(\le I(X\le),I(\le Y\le))=1$.
\end{enumerate}
Then the canonical map $\pic X\oplus \pic Y\to\pic\be(X\lbe\times_{k}\lbe Y\le)$ is an isomorphism.
\end{proposition}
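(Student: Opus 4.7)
The plan is to compare the Cartan-Leray spectral sequences \eqref{leray} for the structure morphisms $X,Y,X\times_{k}Y\to\spec k$. For a geometrically connected and geometrically reduced $k$-scheme $Z$ locally of finite type, this spectral sequence supplies the standard 5-term exact sequence
\[
0\to H^{1}(\g,\ks[Z]^{*})\to\pic Z\to(\pic Z_{\ks})^{\g}\xrightarrow{d_{2}^{Z}} H^{2}(\g,\ks[Z]^{*})\xrightarrow{e_{Z}}\brp Z.
\]
Writing this out for $Z=X,Y$ (taking the direct sum) and for $Z=X\times_{k}Y$ produces a commutative ladder of two exact rows, whose four vertical arrows $\alpha,\beta,\gamma,\epsilon$ are induced by the projections from $X\times_{k}Y$; here $\beta$ is the map I wish to show is an isomorphism. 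Injectivity of $\beta$ follows from Proposition \ref{inj}. Remark \ref{ppic}(a) at $r=1$ combined with Hilbert 90 makes $\alpha$ an isomorphism, and at $r=2$ it shows that $\epsilon$ is surjective with kernel the image of $\delta^{2}\colon\brp k\hookrightarrow H^{2}(\g,\ks[X]^{*})\oplus H^{2}(\g,\ks[Y]^{*})$, $c\mapsto(\iota_{X}^{*}(c),-\iota_{Y}^{*}(c))$, where $\iota_{Z}\colon\ks\hookrightarrow\ks[Z]^{*}$ denotes the inclusion of constants. Hypothesis (i) delivers that $\gamma$ is an isomorphism.

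To prove surjectivity of $\beta$, I will take $w\in\pic(X\times_{k}Y)$ with image $\overline{w}$ and set $(a,b):=\gamma^{-1}(\overline{w})$. Commutativity of the ladder together with the vanishing of $d_{2}^{X\times Y}(\overline{w})$ place the pair $(d_{2}^{X}(a),d_{2}^{Y}(b))$ in $\ker\epsilon$, so there is a unique $c\in\brp k$ with $d_{2}^{X}(a)=\iota_{X}^{*}(c)$ and $d_{2}^{Y}(b)=-\iota_{Y}^{*}(c)$. The hard part will be to show that $c=0$.

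For this I plan a restriction-corestriction argument on the Brauer group. Exactness of the top row at $H^{2}(\g,\ks[X]^{*})$ gives $e_{X}(d_{2}^{X}(a))=0$; since by \eqref{comp} the composition $e_{X}\circ\iota_{X}^{*}$ coincides with the pullback $f^{*}\colon\brp k\to\brp X$, this says $f^{*}(c)=0$ in $\brp X$. For any finite separable $k'/k$ with $X(k')\neq\emptyset$, the base-changed morphism $X_{k'}\to\spec k'$ has a section, so the pullback $f_{k'}^{*}\colon\brp k'\to\brp X_{k'}$ is split injective. Functoriality of $f^{*}$ under the base change $k\to k'$ then forces $f_{k'}^{*}(\res c)=0$, whence $\res c=0$ in $\brp k'$, and corestriction yields $[k':k]\cdot c=\tr(\res c)=0$. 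Taking the gcd over all such degrees annihilates $c$ by $I(X)$; the symmetric argument with $Y$ in place of $X$ annihilates $c$ by $I(Y)$; and hypothesis (ii) then forces $c=0$. Once $c=0$, the top row lifts $(a,b)$ to some $(u,v)\in\pic X\oplus\pic Y$, and $w-\beta(u,v)$ lies in the kernel of the map to $\pic(\xs\times_{\ks}\ys)^{\g}$, which equals the image of $\alpha$; via the inclusion $H^{1}(\g,\ks[X]^{*})\oplus H^{1}(\g,\ks[Y]^{*})\subset\pic X\oplus\pic Y$ this is contained in $\img\beta$, so $w\in\img\beta$, completing the proof.
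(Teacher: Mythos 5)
Your proof is correct and follows essentially the same route as the paper's: injectivity comes from Proposition \ref{inj}, and surjectivity is reduced, through the ladder of five-term exact sequences (which is exactly what the paper's citation of Sansuc's Lemma 6.6 encodes), to killing an obstruction class in $\brp k$ by restriction--corestriction using $\gcd(I(X),I(Y))=1$. The only difference is presentational: you locate the obstruction explicitly as a class in $\krn\brp f\cap\krn \brp g\subset \brp k$ and annihilate it by (in effect) Remark \ref{sect}(d), whereas the paper applies the transfer argument directly to the kernel of the comparison map \eqref{map1}.
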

\begin{proof} By Proposition \ref{inj}, the indicated map is injective. Now, if $Y(k)\neq\emptyset$, then the proof of \cite[Lemma 6.6, p.~40]{san} (which relies on Corollary \ref{kres}\, for $S=\spec k$) shows that the canonical map
\begin{equation}\label{map1}
\frac{\pic\be(X\!\times_{k}\! Y\le)}{\pic X\be\oplus\be \pic Y}\to
\frac{\pic\be(\e\xs\be\times_{\ks}\be\ys\le)^{\g}}{(\e\pic\xs)^{\g}\be\oplus\be (\pic\ys)^{\g}}
\end{equation}
is injective. When $Y(k)=\emptyset$, a restriction-corestriction argument similar to that used in the proof of Proposition \ref{kad} shows that the kernel of the preceding map is annihilated by $I(\le Y\le)$. Interchanging $X$ and $Y$, we conclude that the kernel of \eqref{map1} is annihilated by ${\rm gcd}(I(\le X\le),I(\le Y\le))$, whence the proposition follows.
\end{proof}

\smallskip

\begin{examples}\label{sav1} If $X$ and $Y$ are arbitrary $k$-schemes such that $(X\times_{k} Y\le)(\ks)\neq\emptyset$, then Proposition \ref{inj} yields an injection of $\g$-modules $\pic\xs\be\oplus\be \pic\ys\hookrightarrow \pic\be(\e\xs\be\times_{\ks}\be\ys\le)$ which induces an isomorphism of abelian groups
\begin{equation}\label{gen}
\frac{\pic\be(\e\xs\be\times_{\ks}\be\ys\le)^{\g}}{(\e\pic\xs)^{\g}\be\oplus\be (\pic\ys)^{\g}}\simeq\krn\!\!\left[\left(\frac{\pic\be(\e\xs\!\times_{\ks}\!\ys\le)}{\pic\xs\be\oplus\be \pic\ys}\right)^{\!\!\g}\!\to H^{1}(k,\pic\xs)\oplus H^{1}(k,\pic\ys)\right],
\end{equation}
where the indicated map is a connecting homomorphism in Galois cohomology. Thus hypothesis (i) of the proposition holds if  $(\pic\be(\e\xs\be\times_{\ks}\be\ys\le)/\pic\xs\oplus \pic\ys)^{\g}=0$. This is the case if:
\begin{enumerate}
\item[(a)] $X$ and $Y$ are normal and geometrically integral $k$-schemes locally of finite type and either $\xs$ or $\ys$ is rational. Indeed,  
$\xs$ and $\ys$ are then normal and geometrically integral $\ks$-schemes locally of finite type \cite[${\rm IV}_{2}$, Proposition 6.7.4]{ega} and Corollary \ref{isc1}\, shows that $\pic\be(\e\xs\!\times_{\ks}\!\ys\le)=\pic\xs\be\oplus\be \pic\ys$.

\item[(b)] $k$ is perfect, $Y$ is a (possibly non-reduced) geometrically connected $k$-scheme of finite type and $X$ is a projective $k$-variety such that $H^{1}(\xs,\s O_{\be X^{\rm s}})=0$. Indeed, $\pic\be(\e\xs\!\times_{\ks}\!\ys\le)=\pic\xs\be\oplus\be \pic\ys$ by \cite[Exercise III.12.6(b), p.~292]{hart}.
\end{enumerate}
\end{examples}

\begin{proposition} \label{nice0} Let $k$ be a field and let $X$ and $Y$ be smooth and projective $k$-varieties. Assume that 
\begin{enumerate}
\item[(i)] $\Hom_{\le\lle k}(A,B\le)=0$, where $A$ and $B$ are the Picard varieties of $X$ and $Y$, respectively, and 
\item[(ii)] either $\br\e k=0$ or ${\rm gcd}\le(\le I(X\le),I(\le Y\le))=1$.
\end{enumerate}
Then $\pic\be(X\be\times_{k}\be Y\le)=\pic X\oplus \pic Y$.
\end{proposition}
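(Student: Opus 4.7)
The plan is to apply Proposition \ref{rat} when $\gcd(I(X),I(Y))=1$, and to argue directly via Hochschild--Serre when $\br k=0$. In both cases the essential step is to verify hypothesis~(i) of Proposition~\ref{rat}, namely that the canonical map $(\pic\xs)^{\g}\oplus(\pic\ys)^{\g}\to\pic(\xs\be\times_{\ks}\be\ys)^{\g}$ is an isomorphism. Since $X$ and $Y$ are smooth and projective, $\xs$ and $\ys$ are smooth projective geometrically integral $\ks$-varieties with $\ks$-rational points, and the classical Chevalley--Weil (``K\"unneth for Pic'') decomposition yields a $\g$-equivariant short exact sequence
\[
0\to\pic\xs\oplus\pic\ys\to\pic(\xs\be\times_{\ks}\be\ys)\to\Hom_{\ks}\!\be(\mathrm{Alb}(\xs),\pic^{0}\be(\ys))\to 0,
\]
whose third term equals $\Hom_{\ks}(A^{\vee}_{\ks},B_{\ks})$, where $A^{\vee}=\mathrm{Alb}(X)$ is the dual of the Picard variety $A$. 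Taking $\g$-invariants, the right-hand group becomes $\Hom_k(A^{\vee}\!,B)$, and the required isomorphism reduces to the vanishing of this group.

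To deduce $\Hom_k(A^{\vee}\!,B)=0$ from the hypothesis $\Hom_k(A,B)=0$, I would use that $A$, as an abelian variety over $k$, is projective, hence carries an ample line bundle over $k$ and consequently a polarization $\phi\colon A\to A^{\vee}$ defined over $k$. Since $\phi$ is an isogeny, precomposition with $\phi$ gives an injective homomorphism $\phi^{*}\colon\Hom_k(A^{\vee}\!,B)\hookrightarrow\Hom_k(A,B)$ (injectivity uses surjectivity of $\phi$) whose cokernel is annihilated by $\deg\phi$ (since this integer annihilates $\ker\phi$ as a finite group scheme). As homomorphism groups between abelian varieties are torsion-free, $\Hom_k(A,B)=0$ forces $\Hom_k(A^{\vee}\!,B)=0$; hypothesis~(i) of Proposition~\ref{rat} is thereby verified.

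To conclude, if $\gcd(I(X),I(Y))=1$ I would apply Proposition~\ref{rat} directly. If instead $\br k=0$, I would invoke the Hochschild--Serre spectral sequence for $\bg_{m}$: for any smooth, projective, geometrically integral $k$-scheme $Z$, the equality $\ks[Z]^{*}=\ks^{*}$ combined with Hilbert~90 yields an injection $\pic Z\hookrightarrow(\pic Z^{\mathrm s})^{\g}$ whose cokernel embeds into $\br k=0$; hence $\pic Z=(\pic Z^{\mathrm s})^{\g}$ for each of $Z=X$, $Y$, and $X\be\times_k\be Y$, and chaining this with the isomorphism of the previous paragraph gives $\pic X\oplus\pic Y\isoto\pic(X\be\times_k\be Y)$.

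The principal obstacle is the implication $\Hom_k(A,B)=0\Rightarrow\Hom_k(A^{\vee}\!,B)=0$; the rest of the argument amounts to quoting the Chevalley--Weil/K\"unneth decomposition, Proposition~\ref{rat}, and routine Hochschild--Serre considerations.
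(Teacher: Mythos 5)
Your argument is correct and follows essentially the same route as the paper's proof: both rest on the Skorobogatov--Zarhin/K\"unneth decomposition of $\pic\be(\e\xs\!\times_{\ks}\!\ys\le)$, identify the obstruction with a group of $k$-homomorphisms between abelian varieties that is killed by hypothesis (i) after transferring the vanishing across a $k$-polarization, and then dispose of the two cases in (ii) via the separable-index argument of Proposition \ref{rat} and a Hochschild--Serre computation showing the discrepancy lives in $\br k$. The only differences are cosmetic (the duality is applied on the $A$-side rather than the $B$-side, and the $\br k=0$ case is phrased as three individual isomorphisms $\pic Z\isoto(\pic Z^{\rm s})^{\g}$ instead of the paper's comparison diagram).
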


The following is an immediate consequence of the proposition.

\begin{corollary} \label{ccor} Let $k$ be a field and let $A$ and $B$ be abelian varieties over $k$  such that $\Hom_{\le\lle k}(A,B\le)=0$. Then $\pic\be(A\be\times_{k}\be B\le)=\pic A\oplus\le \pic B$.\qed
\end{corollary}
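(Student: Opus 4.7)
The plan is to deduce the corollary directly from Proposition \ref{nice0} applied to $X=A$ and $Y=B$. Hypothesis (ii) of that proposition is automatic for abelian varieties: each has a rational point (its identity section), so $I(A)=I(B)=1$. The content lies in verifying hypothesis (i), which asks that the Picard varieties of $A$ and $B$ --- that is, the dual abelian varieties $A^{\vee}$ and $B^{\vee}$ --- satisfy $\Hom_{k}(A^{\vee},B^{\vee})=0$.

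By contravariant duality of abelian varieties together with the canonical biduality $A^{\vee\vee}=A$, $B^{\vee\vee}=B$, there is a natural isomorphism $\Hom_{k}(A^{\vee},B^{\vee})\simeq\Hom_{k}(B,A)$. Consequently the task reduces to showing that the hypothesis $\Hom_{k}(A,B)=0$ forces $\Hom_{k}(B,A)=0$; this symmetry is the only delicate point. To establish it I would fix polarizations $\lambda_{A}\colon A\to A^{\vee}$ and $\lambda_{B}\colon B\to B^{\vee}$, which are $k$-isogenies. In the $\Q$-linear isogeny category of abelian $k$-varieties these polarizations become isomorphisms, and combining this observation with contravariant duality gives a chain of natural isomorphisms
\[
\Hom_{k}(A,B)\otimes_{\Z}\Q\;\simeq\;\Hom_{k}(A^{\vee},B^{\vee})\otimes_{\Z}\Q\;\simeq\;\Hom_{k}(B,A)\otimes_{\Z}\Q.
\]
Since the groups $\Hom_{k}(A,B)$ and $\Hom_{k}(B,A)$ of $k$-homomorphisms between abelian varieties are finitely generated and torsion-free, vanishing of one is equivalent to vanishing of the other. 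Thus $\Hom_{k}(A,B)=0$ implies $\Hom_{k}(B,A)=0$, whence $\Hom_{k}(A^{\vee},B^{\vee})=0$, so hypothesis (i) of Proposition \ref{nice0} is satisfied. Applying that proposition yields the stated decomposition $\pic\be(A\be\times_{k}\be B\le)=\pic A\oplus\pic B$. The main (and essentially only) obstacle is the duality bookkeeping needed to pass between $\Hom_{k}(A,B)$ and $\Hom_{k}(A^{\vee},B^{\vee})$; no deeper ingredient is required, and this is why the author flags the corollary as immediate.
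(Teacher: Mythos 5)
Your proof is correct and follows the route the paper intends: apply Proposition \ref{nice0} with $X=A$, $Y=B$, note $I(A)=I(B)=1$ from the identity sections, and reduce hypothesis (i) to the given vanishing via the equivalence $\Hom_{k}(A^{\vee},B^{\vee})=0\Leftrightarrow\Hom_{k}(B,A)=0\Leftrightarrow\Hom_{k}(A,B)=0$, obtained from contravariant duality, biduality, and the polarization isogenies --- exactly the ``duality and the canonical isogeny $B\to B^{\vee}$'' argument the paper invokes inside the proof of Proposition \ref{nice0}. No gaps; this is the same argument, just written out.
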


\begin{proof} (Of Proposition \ref{nice0}) By \cite[Proposition 1.7]{sz}, Galois descent of morphisms of schemes \cite[\S4]{gad} and \cite[Theorem 12.5, p.~123]{mi2}, there exists a canonical isomorphism of free abelian groups of finite $\Z\le$\e-rank $\leq 4\,({\rm dim}\,A) \,({\rm dim}\, B\le)$
\begin{equation}\label{gen2}
\left(\frac{\pic\be(\e\xs\!\times_{\ks}\!\ys\le)}{\pic\xs\be\oplus\be \pic\ys}\right)^{\!\be\g}\isoto\Hom_{\e\ks}((\lbe B^{\e\rm s})^{\lbe\vee}\be,\le A^{\le\rm s}\le)^{\g}=\Hom_{\e k}(B^{\vee}\be,\le A\le),
\end{equation}
where $B^{\vee}$ denotes the dual abelian variety of $B$. It is not difficult to check, using duality and the existence of the canonical isogeny $B\to B^{\vee}$, that $\Hom_{\e k}(B^{\vee}\be,\le A\le)=0$ if, and only if, $\Hom_{\e k}(B,\le A\le)=0$ or, equivalently, if, and only if, $\Hom_{\e k}(A,B\le)=0$. Now \eqref{map1}, \eqref{gen} and \eqref{gen2} together show that there exists a canonical homomorphism of abelian groups
\begin{equation}\label{dad}
\frac{\pic\be(X\!\times_{k}\! Y\le)}{\pic X\be\oplus\be \pic Y}\to\krn[\,\Hom_{\e k}(B^{\vee}\be,\le A\le)\!\to H^{1}(k,\pic\xs)\oplus H^{1}(k,\pic\ys)\e]
\end{equation}
whose kernel is annihilated by ${\rm gcd}\le(\le I(X\le),I(\le Y\le))$. The second arrow in \eqref{dad} is the composition of the inverse of \eqref{gen2} and a connecting homomorphism in Galois cohomology. Note that, since $H^{1}(k,\pic\xs)\oplus H^{1}(k,\pic\ys)$ is a torsion abelian group, the right-hand group in \eqref{dad} is free of the same $\Z\le$\e-rank  as $\Hom_{\e k}(B^{\vee}\be,\le A\le)$.  We will now show that \eqref{dad} is injective if $\br\e k=0$, which will complete the proof.	Since $\ks[\lle X\le]=\ks$ by \cite[Exercise 3.11, pp.~25 and 64]{klei}, \cite[Lemma 6.3(i)]{san} yields a canonical exact and commutative diagram of abelian groups
\[
\xymatrix{0\ar[r]&\pic X\oplus \pic Y\ar@{^{(}->}[d]\ar[r]&(\e\pic\xs)^{\g}\be\oplus\be (\pic\ys)^{\g}\ar@{^{(}->}[d]\ar[r]& \br\e k\oplus\br\e k\ar[d]^(.45){(\e\cdot\e)}\\
0\ar[r]&\pic\be(X\be\times_{k}\be Y\le)\ar[r]& \pic\be(\e\xs\be\times_{\ks}\be\ys\le)^{\g}\ar[r]& \br\e k,
}
\]
where the first two vertical maps are injective by Proposition \ref{inj} and the third vertical arrow is the multiplication map. Note that the kernel of the latter map is canonically isomorphic to $\br\e k$. The diagram shows that the kernel of \eqref{map1}, which is the same as the kernel of \eqref{dad}, is canonically isomorphic to a subgroup of $\br\e k$, which completes the proof.
\end{proof}

\begin{examples}\label{sav2} Let $k,X,Y,A$ and $B$ be as in the statement of Proposition \ref{nice0}. 
\begin{enumerate}
\item[(a)] Since ${\rm dim}\, A\leq {\rm dim}_{\e\ks}\le H^{1}(\xs,\s O_{\be \xs})$ by \cite[Theorem 2.10(iii)]{fga}, and similarly for $\ys$,
condition (i) of Proposition \ref{nice0} clearly holds if either
$H^{1}(\xs,\s O_{\be \xs})=0$ or $H^{1}(\ys,\s O_{ Y^{\lle\rm s}})=0$. Note that there exist $k$-varieties $X$ as above such that $\xs$ is {\it not} rational and $H^{1}(\xs,\s O_{\be \xs})=0$. For example, if $k$ is perfect and $\xs$ is separably rationally connected (e.g., rationally connected \cite[Theorem IV.3.9, p.~203]{k}), then $H^{1}(\xs,\s O_{\be \xs})=0$ \cite{gou}.

\item[(b)] The works of Zarhin \cite{zar1}, \cite[Theorems 2.1 and 3.12]{zar2} and \cite[Theorem 1.1]{zar3} contain numerous examples of nonzero abelian varieties $A$ and $B$ such that $\Hom_{\e\ks}\lbe(A^{\le\rm s}\lbe, B^{\e\rm s}\le)=0$ (and thus $\Hom_{\le\lle k}(A,B\le)=\Hom_{\e\ks}\lbe(A^{\le\rm s}\lbe, B^{\e\rm s}\le)^{\g}=0$). We select from \cite[p.~14]{zar1} the following example. For every integer $n\geq 3$, let $C_{n}$ denote the (smooth projective) hyperelliptic curve over $\Q$ with affine model $y^{\le 2}=x^{n}-x-1$ and let $J(C_{n})$ denote the Jacobian variety of $C_{n}$. If $n\geq 3,m\geq 5$ and $n\neq m$, then $\Hom_{\le\lle \Q}(J(C_{n}),J(C_{m})\le)=0$. Thus if, in addition, either $n$ or $m$ is even (so that either $C_{n}(\Q)\neq\emptyset$ or $C_{m}(\Q)\neq\emptyset$), then Proposition \ref{nice0} shows that $\pic\be(C_{n}\be\times_{\Q}\be C_{m}\le)=\pic C_{n}\oplus \pic C_{m}$.

\item[(c)] Certainly, there exist examples where $\Hom_{\e\ks}\lbe(A^{\le\rm s}\lbe, B^{\e\rm s}\le)\neq 0$ but $\Hom_{\le\lle k}(A,B\le)=\Hom_{\e\ks}\lbe(A^{\le\rm s}\lbe, B^{\e\rm s}\le)^{\g}=0$. The following example is due to W. Sawin \cite{saw}. Let $A$ be an abelian variety over $k$ such that ${\rm End}_{\e\ks}\lbe(A^{\le\rm s})=\Z$ and let $B=A^{\chi}$ be a quadratic twist of $A$. Then there exists a canonical isomorphism of abelian groups $\Hom_{\e\ks}\lbe(A^{\le\rm s}\lbe, B^{\e\rm s}\le)\simeq{\rm End}_{\e\ks}\lbe(A^{\le\rm s})=\Z$. Under the above isomorphism, the action of $\g$ on $\Hom_{\e\ks}\lbe(A^{\le\rm s}\lbe, B^{\e\rm s}\le)$ corresponds to the action of $\g$ on $\Z$ via $\chi$. Thus $\Hom_{\le\lle k}(A,B\le)=\Hom_{\e\ks}\lbe(A^{\le\rm s}\lbe, B^{\e\rm s}\le)^{\g}=0$.
Now Corollary \ref{ccor} yields the following statement: if ${\rm End}_{\e\ks}\lbe(A^{\le\rm s})=\Z$ and $A^{\chi}$ is a quadratic twist of $A$, then $\pic\be(A\be\times_{k}\be A^{\chi}\le)=\pic\be A\oplus \pic\lbe A^{\chi}$.
\end{enumerate}
\end{examples}

\begin{theorem}\label{non2}  Let $S$ be a locally noetherian normal scheme and let $f\colon X\to S$ and $g\colon Y\to S$ be faithfully flat morphisms locally of finite type. Assume that
\begin{enumerate}
\item[(i)]  $X$, $Y$ and $X\be\times_{S}\be Y$ are locally factorial,
\item[(ii)] for every point $s\in S$ of codimension $\leq 1$, the fibers $X_{\lbe s}$ and $Y_{\be s}$ are geometrically integral and
\item[(iii)] the \'etale index $I(\le f\!\times_{\be S}\be g\le)$ is defined and is equal to $1$.
\end{enumerate}	
Then there exists a canonical exact sequence of abelian groups
\[
0\to\pic S\overset{\!\beta^{\lle 1}}{\lra}\pic X\be\oplus\be\pic Y\overset{\!p_{\lbe XY}^{\lle 1}}{\lra} \pic\be(X\!\times_{\be S}\! Y\e)\to\prod\frac{\pic\be(X_{\eta}\be\times_{k(\eta)}\be Y_{\eta}\le)}{\pic X_{\eta}\lbe\oplus\lbe \pic Y_{\eta}}\to 0,
\]
where the maps $\beta^{1}$ and $p_{\lbe XY}^{1}$ are given by \eqref{beta} and \eqref{pxy}, respectively, and the product runs over the set of maximal points $\eta$ of $S$.
\end{theorem}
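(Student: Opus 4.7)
The plan is to assemble the exact sequence from three building blocks already available in the paper: Corollary \ref{non} supplies the left half, Proposition \ref{eqcor} converts cokernels involving $\pic$ to cokernels involving $\npic$, and Proposition \ref{opo} computes the latter. Hypotheses (i)--(iii) of Theorem \ref{non2} are tailored to meet the hypotheses of these three results simultaneously, so I expect the proof to be essentially a matter of assembly and bookkeeping.

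Concretely, the first step is to apply Corollary \ref{non}, whose hypotheses are exactly (ii), (iii), and the normality of $S$, in order to obtain the exact sequence
\[
0\to\pic S\overset{\beta^{1}}{\lra}\pic X\oplus\pic Y\overset{p_{XY}^{1}}{\lra} \pic(X\times_{S} Y).
\]
This gives the first three terms of the desired sequence and reduces the problem to identifying $\cok p_{XY}^{1}$. The second step is to invoke Proposition \ref{eqcor} with $r=1$, which furnishes a canonical isomorphism $\cok p_{XY}^{1}\simeq\cok\overline{p}_{XY}^{\,1}$, where
\[
\overline{p}_{XY}^{\,1}\colon \npic(X/S)\oplus\npic(Y/S)\to\npic(X\times_{S} Y/S).
\]
The third step is to apply Proposition \ref{opo}, whose hypotheses are precisely (i) and (ii). It yields both the injectivity of $\overline{p}_{XY}^{\,1}$ and a canonical isomorphism of its cokernel with $\prod_{\eta}\pic(X_{\eta}\times_{k(\eta)} Y_{\eta})/(\pic X_{\eta}\oplus\pic Y_{\eta})$, where $\eta$ ranges over the maximal points of $S$. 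Splicing this identification into the sequence from step one completes the proof.

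Since each step is a direct invocation of an earlier result, the main (and really only) obstacle is the bookkeeping one: one must verify that $\beta^{1}$ and $p_{XY}^{1}$ in the statement are literally the morphisms produced by Corollary \ref{non}, and that the chain of canonical identifications obtained in steps two and three carries $\cok p_{XY}^{1}$ onto the asserted product of local cokernels via the natural restriction maps to maximal fibers. Both checks are carried out by unwinding the definitions in Section \ref{3}, in particular the definition of $\overline{p}_{XY}^{\,r}$ in \eqref{pxyb} and the cokernel identification \eqref{iso1}.
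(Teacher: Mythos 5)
Your proposal is correct and follows the paper's own proof step for step: Corollary \ref{non} for exactness at the first two nontrivial terms, Proposition \ref{eqcor} to identify $\cok p_{XY}^{1}$ with $\cok\overline{p}_{XY}^{\,1}$, and Proposition \ref{opo} to compute the latter as the product over the maximal points. Nothing further is needed.
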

\begin{proof} By Corollary \ref{non} the indicated sequence is exact at the first two nontrivial terms and by Proposition \ref{eqcor} the cokernel of the canonical map $\pic X\be\oplus\be\pic Y\to \pic\be(X\!\times_{\be S}\be Y\e)$ is naturally isomorphic to 
$\npic\be(\le  X\!\times_{\be S}\!  Y\be/\be S\le)/\e	\npic\be(\le X\be/\be S\le)\oplus\npic\be(\le Y\be/\be S\le)$. Now Proposition \ref{opo} completes the proof.
\end{proof}

\begin{corollary}\label{non3} Let the notation and hypotheses be as in the theorem. Assume, in addition, that $\pic\be(X_{\eta}^{\le\rm s}\be\times_{k(\eta)^{\rm s}}\be Y_{\!\eta}^{\e\rm s}\le)^{\g(\eta)}=(\e\pic X_{\eta}^{\le\rm s})^{\g(\eta)}\be\oplus\be(\e\pic Y_{\!\eta}^{\e\rm s})^{\g(\eta)}$ for every maximal point $\eta$ of $S$. Then there exists a canonical exact sequence of abelian groups
\[
0\to\pic S\to\pic X\be\oplus\be\pic Y\to \pic\be(X\!\times_{\be S}\! Y\e)\to 0.
\]
\end{corollary}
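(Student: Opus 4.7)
The corollary is an immediate consequence of Theorem \ref{non2} once we verify that, under the additional hypothesis on Galois invariants of Picard groups of geometric generic fibers, the product term
\[
\prod\frac{\pic\be(X_{\eta}\be\times_{k(\eta)}\be Y_{\eta}\le)}{\pic X_{\eta}\lbe\oplus\lbe \pic Y_{\eta}}
\]
appearing at the right end of the exact sequence of Theorem \ref{non2} vanishes. My plan is therefore to apply Proposition \ref{rat} to each maximal fiber pair $(X_\eta, Y_\eta)$ over $k(\eta)$ and conclude that each factor in the product is zero.

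To apply Proposition \ref{rat} to $X_\eta$ and $Y_\eta$ (as $k(\eta)$-schemes), I need to verify its two hypotheses. Hypothesis (i) — that the canonical map
\[
(\pic X_{\eta}^{\le\rm s})^{\g(\eta)}\oplus(\pic Y_{\eta}^{\e\rm s})^{\g(\eta)}\to \pic(X_{\eta}^{\le\rm s}\times_{k(\eta)^{\rm s}} Y_{\!\eta}^{\e\rm s})^{\g(\eta)}
\]
is an isomorphism — is exactly the additional hypothesis assumed in the corollary (the map is automatically injective by Proposition \ref{inj}, so the hypothesis in Corollary \ref{non3} is equivalent to requiring it to be an isomorphism). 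Note also that $X_\eta$ and $Y_\eta$ are geometrically connected and geometrically reduced, being geometrically integral by hypothesis (ii) of Theorem \ref{non2}.

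For hypothesis (ii) of Proposition \ref{rat}, I will use implication \eqref{genf}: since $I(\le f\!\times_{\be S}\! g\le)=1$ is defined (hypothesis (iii) of Theorem \ref{non2}), this forces $I(\le f_{\eta}\le)=I(\le g_{\eta}\le)=1$ for every maximal point $\eta$. As observed in the preliminaries (just after \eqref{ind} and after Definition \ref{eti}), for a scheme over a field the separable index of $X_\eta$ coincides with the \'etale index of the structural morphism $X_\eta\to\spec k(\eta)$, so $I(X_\eta)=I(Y_\eta)=1$ and in particular $\gcd(I(X_\eta),I(Y_\eta))=1$. Thus Proposition \ref{rat} yields $\pic X_\eta\oplus\pic Y_\eta\isoto\pic(X_\eta\times_{k(\eta)}Y_\eta)$ for every maximal point $\eta$ of $S$, making each factor of the product in Theorem \ref{non2} trivial. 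Combining this with the sequence of Theorem \ref{non2} gives the desired exact sequence. The argument is essentially a direct bookkeeping matter; no further technical obstacle arises beyond recognizing the correct hypothesis matching between Theorem \ref{non2}, Proposition \ref{rat}, and \eqref{genf}.
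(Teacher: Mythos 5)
Your proposal is correct and follows essentially the same route as the paper: apply Theorem \ref{non2}, then kill the product term by applying Proposition \ref{rat} to each maximal fiber, with hypothesis (i) supplied by the corollary's extra assumption and hypothesis (ii) supplied by \eqref{genf}. The paper's own proof is exactly this one-line reduction, so no further comment is needed.
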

\begin{proof} This follows from the theorem and Proposition \ref{rat} noting that, by \eqref{genf}, hypothesis (iii) of the theorem implies hypothesis (ii) of Proposition \ref{rat}.
\end{proof}

\end{document}